\documentclass[11pt,leqno]{amsart}

\usepackage{parskip}
\allowdisplaybreaks
\newcommand\del{\partial}
\newcommand\ext{\mathrm{d}}
\newcommand{\res}{\mathbin{\hspace{0.1em}\vrule height 1.3ex depth 0pt width 0.13ex\vrule height 0.13ex depth 0pt width 1.0ex}} 
\newcommand\sfrak{\mathfrak{s}}

\usepackage[left = 2.54cm, right = 2.54cm, top = 2.54cm, bottom = 2.54cm]{geometry}
\usepackage{amsmath}
\usepackage{amsfonts}
\usepackage{amssymb}
\usepackage{amsthm}
\usepackage{mathtools}
\usepackage{graphicx,caption, color, hyperref}
\usepackage{enumitem}
\usepackage{multicol}
\usepackage{bbm}
\usepackage{stmaryrd}
\usepackage{tikz}
\usepackage{enumitem}
\usepackage{bm}
\usepackage{pict2e}
\usepackage{subcaption}
\usepackage[mathscr]{euscript}

\usepackage{indentfirst}
\newlist{multienum}{enumerate}{1}
\setlist[multienum]{
    label=\alph*),
    before=\begin{multicols}{2},
    after=\end{multicols}
}
\usepackage{hyperref}
\usepackage[nameinlink]{cleveref}
\hypersetup{
    colorlinks=true,
    linkcolor=black,
    filecolor=black,      
    urlcolor=black,
    citecolor={black}
    }

\numberwithin{equation}{section}
\newcounter{theorem}

\newtheorem{prop}[theorem]{Proposition}
\newtheorem{lem}[theorem]{Lemma}
\newtheorem{theorem}[theorem]{Theorem}
\newtheorem{corollary}[theorem]{Corollary}

\newtheorem{lemma}[theorem]{Lemma}

\numberwithin{theorem}{section}

\theoremstyle{definition}
\newtheorem{defn}[theorem]{Definition}

\newtheoremstyle{newtheoremstyledefn}
{3pt}
{3pt}
{}
{\parindent}
{\bfseries}
{.}
{0.5em}
{} 

\theoremstyle{newtheoremstyledefn}
\newtheorem{remark}[theorem]{Remark}

\newcommand\A{\mathcal{A}}

\newcommand\FB{\mathfrak{B}}
\newcommand\C{\mathbb{C}}
\newcommand\CC{\mathcal{C}}

\newcommand\BC{\mathbf{C}}

\renewcommand\H{\mathcal{H}}
\newcommand\CK{\mathcal{K}}

\newcommand\R{\mathbb{R}}

\renewcommand\S{\mathcal{S}}

\newcommand\Z{\mathbb{Z}}

\newcommand{\reg}{\textnormal{reg}}
\newcommand{\sing}{\textnormal{sing}}

\newcommand{\al}{\alpha}
\renewcommand{\dim}{\textnormal{dim}}

\newcommand{\smallnorm}[1]{\lVert#1\rVert}

\newcommand{\dist}{\textnormal{dist}}

\newcommand{\sref}[2]{\hyperref[#2]{#1 \ref{#2}}}
\newcommand{\spt}{\textnormal{spt}}

\newcommand{\graph}{\textnormal{graph}}

\renewcommand{\div}{\textnormal{div}}

\newcommand{\mb}[1]{\llbracket{#1}\rrbracket}
\newcommand\w{\omega}
\newcommand\vphi{\varphi}
\renewcommand\phi{\vphi}
\newcommand\eps{\varepsilon}
\renewcommand{\bf}[1]{\mathbf{#1}}

\makeatletter

\usepackage{fancyhdr}
\author{Paul Minter \and Sidney Stanbury}

\address{\textnormal{Department of Pure Mathematics and Mathematical Statistics, University of Cambridge}}
\email{pdtwm2@cam.ac.uk, sas258@cam.ac.uk}

\title{Area minimising hypersurfaces mod $p$ do not admit immersed branch points}
\date{}
\begin{document}
\begin{abstract}
We show that area minimising hypersurfaces mod $p$ do not admit immersed branch points, namely branch points about which all classical singularities are immersed. Furthermore, we show that if an $n$-dimensional area minimising hypersurface mod $p$ is smoothly immersed outside an $\H^{n-1}$-null set, then it is in fact smoothly immersed outside a closed set of Hausdorff dimension at most $n-3$.

These results are consequences of a more general analysis of immersed stable minimal hypersurfaces with a certain `alternating' orientation. Indeed, our proof does not rely on the minimising property other than through stationarity, stability, and the verification of simple structural properties of the hypersurface.
\end{abstract}

\maketitle

\tableofcontents

\section{Introduction \& Main Results}\label{sec:introduction}

A classical result in geometric analysis is that an area minimising hypersurface $M^n$ (namely, a codimension one \emph{integer} multiplicity locally area minimising current) is smoothly embedded outside a (closed) singular set\footnote{For us, a \emph{singular point} will always refer to a point where the hypersurface is not smoothly \emph{embedded} on some neighbourhood of the point.} of Hausdorff dimension $\leq n-7$. Furthermore, under a mass bound, such hypersurfaces form a compact set in the topology of currents. A cornerstone of this theory is the $\eps$-regularity theorem of De Giorgi \cite{DG61} (cf.~\cite{Allard72}), a consequence of which is that area minimising hypersurfaces \emph{do not admit} flat singular points, i.e.~singular points with planar tangent cones (with multiplicity).

A related class of minimal hypersurfaces are those which are \emph{area minimising mod $p$} for some integer $p\in \{2,3,\dotsc\}$. They arise, for instance, as area minimising representatives of homology classes with $\Z_p\equiv \Z/p\Z$ coefficients. Their regularity theory is rather different to the area minimising case\footnote{Throughout this paper, by ``area minimising'' we will mean area minimising with $\Z$-coefficients, whilst we will always write ``area minimising mod $p$'' to refer to $\Z_p$-coefficients.} as, for example, they can have large (codimension $1$) singular sets. The primary structural difference lies in that area minimising hypersurfaces mod $p$ may exhibit \emph{classical singularities}\footnote{A \emph{classical singularity} is a singular point where locally the hypersurface is given by the sum of a finite number of $C^{1,\alpha}$ submanifolds-with-boundary, all of whose boundaries coincide. These include, for instance, triple junction singularities and immersed singular points where individual ($n$-dimensional) sheets meet transversely along a common $(n-1)$-dimensional submanifold.} (see Figure \ref{fig:mod-p-classical-singularity}).

\begin{figure}[h]
	\centering
	\begin{tikzpicture}
		\draw (0,0) circle (1.5);
		\filldraw (0,1.5) circle (0.05);
		\filldraw (0.9,1.2) circle (0.05);
		\filldraw (-0.8,-1.275) circle (0.05);
		\filldraw (1.465,-0.3) circle (0.05);
		\filldraw (-1.475, 0.28) circle (0.05);
		\filldraw (-0.2,-1.485) circle (0.05);
		\filldraw (-0.2,0.1) circle (0.05);
		
		\draw (-0.2,0.1) -- (0,1.5);
		\draw (-0.2,0.1) -- (0.9,1.2);
		\draw (-0.2,0.1) -- (-0.8,-1.275);
		\draw (-0.2,0.1) -- (1.465,-0.3);
		\draw (-0.2,0.1) -- (-1.475,0.28);
		\draw (-0.2,0.1) -- (-0.2,-1.485);
	\end{tikzpicture}
	\caption{\footnotesize If one takes $p\geq 3$ distinct points (positively oriented) on a circle in $\R^2$, an area minimiser mod $p$ with this boundary will always contain a classical singularity (of density $p/2$) in the interior. If $p\geq 6$ is even and the points are in general position, there will always be a non-immersed classical singularity (as generically $p/2$ ($\geq 3$) lines do not intersect at a common point).}
	\label{fig:mod-p-classical-singularity}
\end{figure}
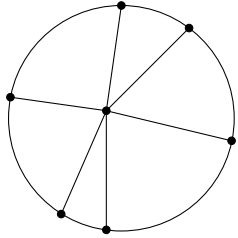

The presence of classical singularities makes the theory closer to that of minimal \emph{immersions}, rather than minimal \emph{embeddings} (as in the area minimising case). A key difficulty is that one cannot reduce to a ``multiplicity one'' setting, and is forced to consider higher multiplicity situations. However, when classical singularities are absent, the theory of area minimising hypersurfaces mod $p$ mirrors that of area minimising hypersurfaces; indeed, both settings are then contained within the stability theory of Wickramsekera \cite{Wic14} (see \cite{MW24} for further discussion).

Area minimising hypersurfaces mod $p$ can also have flat singular points (see Figure \ref{fig:flat-example}). In turn, one is forced to consider the possibility of \emph{branch points}, i.e.~flat singular points about which the hypersurface does not decompose as a finite sum of minimal graphs over the tangent plane. One expects the structure of the hypersurface about a typical branch point to be modelled on $\text{Re}(z^{r/s})$ for coprime integers $r> s\geq 2$, leading to a loss of analyticity (and, if $r<2s$, curvature control). Branch points cause significant complications for compactness and regularity theories.

\vspace{-1.5em}

\begin{figure}[h]
	\centering
\tikzset{every picture/.style={line width=0.75pt}}       

\begin{tikzpicture}[x=0.75pt,y=0.75pt,yscale=-1,xscale=1]
\draw    (285.44,8.8) .. controls (161.86,140.04) and (318.68,169) .. (344.25,131.89);
\draw    (344.25,131.89) .. controls (357.88,93.88) and (284.59,-20.16) .. (353.62,25.1);
\draw    (344.25,131.89) .. controls (352.77,125.56) and (364.7,119.22) .. (370.67,92.07);
\draw    (342.54,95.69) .. controls (269.25,101.12) and (189.13,92.07) .. (241.12,75.05);
\draw    (342.54,95.69) .. controls (420.95,85.74) and (386.01,74.88) .. (372.37,71.25);
\draw    (286.01,67.94) .. controls (297.94,67.03) and (311,67.8) .. (323.51,68.54);
\draw  [dash pattern={on 0.84pt off 2.51pt}]  (241.12,75.05) .. controls (257.31,70.35) and (269.53,68.54) .. (286.01,67.94);
\draw    (353.62,25.1) .. controls (370.67,39.58) and (374.08,61.3) .. (372.37,71.25);
\draw  [dash pattern={on 0.84pt off 2.51pt}]  (265.84,70.35) -- (302.06,81.66);
\draw    (302.06,81.66) -- (342.54,95.69);
\draw    (238.14,92.3) -- (305.33,82.42);
\draw    (285.44,8.8) .. controls (277.77,51.8) and (283.73,80.31) .. (302.06,81.66);
\draw    (302.06,81.66) .. controls (315.27,83.02) and (330.18,72.61) .. (325.5,37.31);
\draw  [dash pattern={on 0.84pt off 2.51pt}]  (305.33,82.42) -- (336.58,76.38);
\draw    (336.58,76.38) -- (372.37,71.25);
\draw  [dash pattern={on 0.84pt off 2.51pt}]  (323.51,68.54) .. controls (336.33,69.14) and (358.33,69.14) .. (372.37,71.25);
\end{tikzpicture}
\vspace{-1.5em}
\caption{\footnotesize An area minimiser mod $4$ with a flat singular point (the tangent plane has multiplicity $2$). Note that this is not a branch point, as the surface decomposes as Enneper's surface with a tangent plane. The fact that this surface is area minimising mod $4$ follows from \cite{White79}.}
\label{fig:flat-example}
\end{figure}
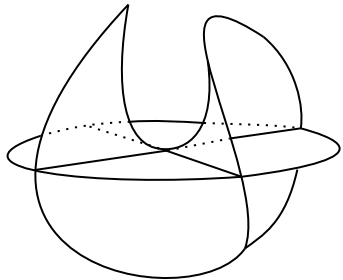

The aim of the present paper is to address some of these questions concerning the structure and regularity of area minimising hypersurfaces mod $p$ and to demonstrate that it is better than one might initially expect (cf.~\cite[Problem 4.20]{GMT}). The key insight underpinning much of our work is that, in analogy with area minimising hypersurfaces, ``genuine'' branch points do not appear to be present in area minimising hypersurfaces mod $p$. Consequently, we wish to understand when an area minimiser mod $p$ is locally a finite sum of embedded minimal hypersurfaces (with empty interior boundary), up to generally unavoidable singularities.

Our first result in this direction is that area minimising hypersurfaces mod $p$ do not admit branch points when all their classical singularities are immersed\footnote{By this, we mean that the tangent cone $\BC$ to \emph{any} classical singularity is the sum of a finite number of \emph{hyperplanes}, rather than only half-hyperplanes, i.e.~$\BC = \sum_i |P_i|$ for hyperplanes $P_i$, with at least two being distinct, and, for any $i,j$ with $P_i,P_j$ distinct, $P_i\cap P_j$ is a fixed $(n-1)$-dimensional subspace (independent of $i,j$). Equivalently, the local structure of $V$ about any classical singularity is given by the sum of (intersecting, $C^1$) embedded submanifolds, rather than only submanifolds with (interior) boundary. Under minimality, the $C^1$ regularity is then upgradable to $C^\infty$.}:

\begin{theorem}\label{thm:main-no-bp}
    Let $T^n$ be an area minimising hypersurface mod $p$ in $B^{n+1}_2(0)$ with $\del^p T = 0$. Suppose all classical singularities of $T$ are immersed. Then, $T$ does not have any branch points.
\end{theorem}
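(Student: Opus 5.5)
Our plan is to deduce Theorem~\ref{thm:main-no-bp} from a general regularity analysis of immersed stable minimal hypersurfaces carrying an ``alternating'' orientation. As the abstract says, the only input from minimality mod $p$ will be stationarity, stability, and some structural properties. We argue by contradiction. Suppose $T$ has a branch point $x_0$, with tangent cone $q|P|$ for a hyperplane $P$ and some integer $1\le q\le p/2$. Write $V=\mathbf{v}(T)$ for the associated stationary integral varifold.

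We first record the structural facts about $V$ near $x_0$. By Allard-type regularity and the mod $p$ theory, every point of density less than $q$ is either a regular point or a classical singularity. By hypothesis, every classical singularity is immersed, so about it $V$ is locally a sum of smooth embedded minimal sheets meeting transversely along a common $(n-1)$-dimensional submanifold. Next, $V$ is stable on its regular part, by a second variation computation applied to the minimiser mod $p$. The key extra property is orientation. Near an immersed classical singularity, the sheets inherit orientations from $T$, and the condition $\partial^p T=0$ forces these orientations to alternate: the mod $p$ boundary contributions of the half-sheets cancel. This is precisely the ``alternating'' condition, and it rules out sheets merging into a non-immersed or one-sided configuration at lower-density points.

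The core step is a sheeting/no-branch-point theorem for this class: if $V$ is close to $q|P|$ in $B_1$, then $V$ decomposes as a sum of $q$ smooth minimal graphs over $P$, possibly crossing one another. I would prove this by induction on $q$, following Wickramsekera's stability framework. The case $q=1$ is Allard's theorem. For the inductive step there are three parts:
\begin{itemize}
\item[(i)] Establish an excess decay via a blow-up (coarse blow-up) argument. Here the immersed structure and the alternating orientation allow the sheets to be reordered locally, so that blow-ups are averages of harmonic functions with a well-defined $q$-valued structure.
\item[(ii)] Show the blow-ups are $C^{1,\alpha}$, i.e.\ generalised-$C^{1,\alpha}$ harmonic functions in the Minter--Wickramsekera sense.
\item[(iii)] Rule out blow-up limits of branched type $\mathrm{Re}(z^{r/s})$. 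The alternating orientation forces the blow-up to be a sum of linear functions, because a two-valued branched harmonic function would produce a non-alternating orientation around the branch set.
\end{itemize}
Excess decay then iterates, giving a unique tangent plane with Hölder decay at every density-$q$ point. Combining this with the induction hypothesis away from the density-$q$ set yields the graphical decomposition, contradicting that $x_0$ is a branch point.

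The main obstacle is the minimum distance / branch-exclusion step, namely (iii) together with the estimates needed for the blow-up to capture the orientation. The difficulty is that sheets may cross, so there is no natural ordering, and classical singularities of density $<q$ may accumulate onto density-$q$ points. My first attempt would be to pass the orientation to the blow-up limit as a sign assignment on the graphs $u_1,\dots,u_q$, preserved under crossing, and then to use a Simon-type $L^2$ non-concentration estimate near the density-$q$ set. Together these would show that any homogeneous blow-up is a sum of $q$ linear functions. A branched homogeneous profile of degree $r/s$ is incompatible with globally consistent alternating signs, and that incompatibility should close the argument.
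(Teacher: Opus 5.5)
\textbf{Where the proposal falls short.} Your reduction is sound: the orientation comes from $\partial^pT=0$, stability holds on the regular part, and the multiplicity $q<p/2$ case is covered by White. The problem is that you have misplaced the main obstacle. Excess decay with unique tangent planes and H\"older decay at density-$q$ points only reproduces what Minter--Wickramasekera's Theorem A already provides: near a multiplicity-$Q$ plane ($Q=p/2$), $V$ is the graph of a $C^{1,\alpha}$ $Q$-valued function $u$. That structure is fully compatible with branching, since profiles like $\mathrm{Re}(z^{3/2})$ are $C^{1,1/2}$. So excess decay cannot be where branching is excluded.

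Once this structure is in hand, the alternating condition excludes monodromy directly, with no blow-up analysis. Order the values $\widetilde u_1\le\cdots\le\widetilde u_Q$ and reverse the order on those components of the regular set where the orientation sign is negative. The Hopf lemma then shows the resulting single-valued selection $u_1,\dots,u_Q$ is $C^{1,\alpha}$ across each immersed classical singularity.

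Your step (iii) is also incorrect as stated. Tangent maps at density-$Q$ flat singular points are never sums of linear functions: they are homogeneous of degree $>1$, and in this class they are harmonic polynomials of degree $\ge 2$. Even knowing that blow-ups are unbranched would not by itself transfer to $V$.

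\textbf{The missing step.} The real gap is hidden in the sentence ``combining this with the induction hypothesis away from the density-$q$ set yields the graphical decomposition''. Off the $Q$-touching set $\mathcal{K}=\{x: u_i(x)=u_j(x),\ Du_i(x)=Du_j(x)\ \forall i,j\}$, each $u_i$ does solve the minimal surface equation. However, stationarity of $\sum_i\mathbf{v}(u_i)$ does not imply stationarity of each $\mathbf{v}(u_i)$ across $\mathcal{K}$, because the individual first variations could be supported on $\mathcal{K}$ and cancel in the sum. Nothing a priori controls the size of $\mathcal{K}$; it is exactly where multiplicity-$Q$ pieces bounded by branch points could hide. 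To conclude you need something like $\mathcal{H}^{n-1}(\mathcal{K})=0$, and your proposal has no argument for this.

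The paper obtains $\dim_{\mathcal{H}}\mathcal{K}\le n-2$ as follows:
\begin{enumerate}
\item Coarse blow-ups in this class split into $Q$ harmonic functions, using removability of $\{w=0,Dw=0\}$ for $C^{1,\alpha}$ functions that are harmonic off it.
\item Hence tangent maps at points of $\mathcal{K}$ have integer homogeneity $\ge 2$, so the planar frequency there is $\ge 2$.
\item This frequency bound upgrades each $u_i$ to $C^{1,1}$.
\item Then $f=u_1-u_2$ weakly solves $\operatorname{div}(A\,Df)=0$ on the whole ball with Lipschitz $A$, using that $D^2f=0$ almost everywhere on $\mathcal{K}$.
\item Garofalo--Lin frequency monotonicity gives $\dim_{\mathcal{H}}\{f=0,\,Df=0\}\le n-2$, so $\mathcal{K}$ is removable for the minimal surface equation.
\end{enumerate}
This size bound on $\mathcal{K}$ is the heart of the proof, and your outline needs a replacement for it.
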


Consequently, when all classical singularities are immersed, the local structure about a flat singular point must be described by a finite union of minimal graphs over the (unique) tangent space. This extends a result of White from $p=4$ to general $p$ \cite{White79}. We note that when $p$ is odd or $p=2$ there are no flat singular points at all due to \cite{White84} and \cite{Allard72}, respectively.

In particular, if an area minimising hypersurface mod $p$ admits a branch point, then the branch point must be a limit point of \emph{non-immersed} classical singularities. As far as the authors are aware, such an example has yet to be constructed. Nonetheless, even though the existence of such an example seems plausible, we do not believe it to have a ``genuine'' branch point. For instance, if instead of assuming that all classical singularities of the current are immersed, we only assume that all classical singularities of the support are immersed, then the support does not have any branch points. We discuss this further in Section \ref{sec:rest}.

We stress that Theorem \ref{thm:main-no-bp} is strictly a codimension one phenomenon. In codimension $>1$ area minimisers mod $p$ can have branch points. This is analogous to (and follows from) how area minimisers in codimension $>1$ exhibit branch points, as seen through complex subvarities of $\C^n$.

Building on Theorem \ref{thm:main-no-bp}, we establish the following (stronger) partial regularity theorem:

\begin{theorem}\label{thm:main-reg}
    Let $T^n$ be an area minimising hypersurface mod $p$ in $B^{n+1}_2(0)$ with $\del^p T = 0$. Suppose all classical singularities of $T$ are immersed. Then, $T$ is represented by a two-sided smooth immersion outside a (closed) set of Hausdorff dimension at most $n-3$.
\end{theorem}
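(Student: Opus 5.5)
The plan is to obtain Theorem \ref{thm:main-reg} by a dimension-reduction argument in the spirit of Federer and Almgren, applied to the varifold $V$ associated with $T$. Theorem \ref{thm:main-no-bp} is what makes this possible: it removes branch points, so near flat singular points $T$ has a multi-valued graph decomposition. First I would record the structural input.
\begin{enumerate}
\item[(i)] By the abstract's framing, $V$ is stationary and stable on its regular part, and the class of such hypersurfaces mod $p$ with immersed classical singularities is closed under blow-ups. Since the minimising property mod $p$ passes to tangent cones, all tangent cones lie in the same class.
\item[(ii)] By Theorem \ref{thm:main-no-bp}, about any flat singular point the hypersurface is a finite union of smooth minimal graphs over the unique tangent plane. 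Each such graph is a smooth embedded sheet, so near such a point $V$ is a smooth (two-sided) immersion.
\end{enumerate}

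Next I would define the bad set $\mathcal{B}$ to be the set of points near which $T$ is not a two-sided smooth immersion. This set is closed. I would stratify it by the dimension of the spine of the tangent cones. Points whose tangent cone has an $(n-1)$-dimensional spine are classical cones, and the hypothesis forces these to be sums of hyperplanes through a common $(n-1)$-plane. Allard/Simon-type $\eps$-regularity for such cones, or for immersed unions of hyperplanes under stability, would then show that $V$ is locally a union of smooth sheets meeting transversely along $C^{1,\alpha}$ submanifolds. So these points lie outside $\mathcal{B}$. Points with a planar tangent cone lie outside $\mathcal{B}$ by (ii).

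For two-sidedness, I would use the alternating orientation structure of the mod $p$ current. Locally each sheet carries an orientation coherent with $T$ mod $p$. At an immersed classical singularity, the orientations of the sheets glue consistently across the intersection, which gives two-sidedness of the immersion away from $\mathcal{B}$.

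The main step is to rule out tangent cones whose spine has dimension exactly $n-2$. Once this is done, the standard Federer dimension-reduction gives $\dim_{\H}\mathcal{B}\le n-3$. Such a cone splits as $\BC_0\times\R^{n-2}$, with $\BC_0$ a one-dimensional stationary cone in $\R^3$, i.e.\ a union of rays on a great-circle network in $S^2$. I would argue as follows.
\begin{enumerate}
\item[(a)] A one-dimensional stationary cone in $\R^2\times\{0\}$-type configurations is a union of rays through the origin. The cross-section of $\BC_0$ is a stationary geodesic network in $S^2$ with integer multiplicities.
\item[(b)] Minimising mod $p$ together with the immersed hypothesis forces every junction of this network to be an immersed crossing of great-circle arcs. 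Together with stability, this should force the network to be a union of great circles.
\item[(c)] A union of great circles makes $\BC_0$ a sum of planes through $0$ in $\R^3$. Then $\BC$ is a sum of hyperplanes. If at least three of these hyperplanes are distinct, their intersection has dimension $n-2$ and is not common, which contradicts the minimising mod $p$ property: such a configuration can be perturbed, as in Figure \ref{fig:mod-p-classical-singularity}, to reduce area. Otherwise the spine has dimension $\ge n-1$.
\end{enumerate}

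I expect step (b) to be the main obstacle. I would first try a direct classification of stable mod $p$ minimising geodesic networks on $S^2$, using Taylor's $Y$/$T$ classification adapted to mod $p$. Failing that, I would use a calibration argument to show that a non-coplanar triple of planes is not minimising mod $p$.
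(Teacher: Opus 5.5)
\textbf{Gap in step (c).} Your skeleton matches the paper's (Theorem \ref{thm:main-reg-2}, Case 3): flat points via Theorem \ref{thm:main-no-bp}, points with an $(n-1)$-dimensional spine via $\eps$-regularity near density $p/2$ classical cones plus the immersed hypothesis, exclusion of spine dimension $n-2$, then Federer--Almgren reduction. The gap is in step (c), which is where all the real content lies.

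You assert that a sum of hyperplanes $\BC=\BC_0\times\R^{n-2}$ whose members share no common $(n-1)$-plane ``can be perturbed, as in Figure \ref{fig:mod-p-classical-singularity}, to reduce area''. No competitor is given; that figure only illustrates that minimisers mod $p$ contain classical singularities. Your fallback cannot work either, since a calibration can only certify minimality, never non-minimality. Nor is there a local obstruction of the kind your other steps use. For $p=4$, the three coordinate planes of $\R^3$, each with multiplicity one, have every crossing an immersed classical singularity of density $2=p/2$. Near each crossing one can orient the four half-planes so that their Stokes orientations agree. (Incidentally, three distinct hyperplanes may well share an $(n-1)$-plane; the relevant condition is that the whole family has none.)

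\textbf{The missing idea.} What rules such cones out is a global parity obstruction coming from the orientation, which you only use for two-sidedness.
\begin{enumerate}
\item[(1)] $\BC$ is itself minimising mod $p$, so by Lemma \ref{lemma:alternating-check} it is alternating. All planes have multiplicity $<p/2$, because crossings have density exactly $p/2$.
\item[(2)] Hence every vertex of the link network $\Sigma=S^2\cap\spt\|\BC_0\|$ is a source or a sink. Adjacent vertices therefore alternate, and the combinatorial graph of $\Sigma$ has no odd cycles.
\item[(3)] Every vertex has degree $\geq 4$. There are no $2$-cycles, since a $2$-cycle would force a vertex-free half great circle and hence all planes through one line. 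So every face has at least four sides.
\item[(4)] Then $2E\geq 4V$, $2E\geq 4F$ and Euler's formula $V-E+F=2$ give $E+2\leq E$, a contradiction.
\end{enumerate}
This is exactly Lemma \ref{lemma:spines}. Equivalently, an arrangement of great circles not all through one antipodal pair has a triangular face, so alternation fails at some crossing. There the one-dimensional corner-cutting competitor reduces area. That competitor lives at an $(n-1)$-dimensional crossing, not near the $(n-2)$-dimensional spine, and locating it requires precisely this combinatorial input.

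\textbf{Two smaller points.}
\begin{enumerate}
\item[(a)] Step (b), which you flagged as the main obstacle, is immediate and needs neither stability nor a Taylor-type classification. Each junction of $\Sigma$ is an immersed crossing, by $\eps$-regularity near density $p/2$ classical cones applied to rescalings of $T$. So each arc continues straight through every vertex with unchanged multiplicity, and $\Sigma$ is a sum of great circles.
\item[(b)] In your two-sidedness paragraph, the orientation of $T$ does not glue continuously along a sheet across an immersed classical singularity. Equal Stokes orientations means it flips there. A continuous normal is obtained only as $s\,\nu_T$, with $s$ the bipartite sign, as in the proof of Lemma \ref{lemma:decomposition}.
\end{enumerate}
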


We do not expect the $n-3$ bound in Theorem \ref{thm:main-reg} to be optimal; we suspect the sharp bound should be $n-7$ in line with known singularity models for area minimising currents. The obstruction to improving $n-3$ to $n-7$ lies in that there could be non-flat hypercones in $\R^{n+1}$ for $4\leq n+1\leq 7$ which are area minimising mod $p$ and represented by two-sided smooth immersions away from $0$ (see also Remark \ref{remark:ambient-stable}). We note that when $p=2,4$ or $p$ is odd no such cones exist, and so the conclusion of Theorem \ref{thm:main-reg} improves in these cases to $n-7$.

Notice that Theorem \ref{thm:main-reg} shows, for $T$ an $n$-dimensional area minimising hypersurface mod $p$, the following are equivalent:
\begin{enumerate}
    \item [(a)] All classical singularities of $T$ are immersed;
    \item [(b)] $T$ is smoothly immersed outside a closed set of Hausdorff dimension $\leq n-3$;
    \item [(c)] $T$ is smoothly immersed outside an $\H^{n-1}$-null set;
\end{enumerate}
Clearly $\text{(b)}\Rightarrow\text{(c)}\Rightarrow\text{(a)}$. The implication $\text{(a)}\Rightarrow \text{(b)}$ is non-trivial, and is exactly Theorem \ref{thm:main-reg}. In particular, having only classical singularities which are immersed is a necessary and sufficient condition for an area minimising hypersurface mod $p$ to be smoothly immersed away from a (closed) set of codimension $\geq 3$.

Our proof of Theorem \ref{thm:main-no-bp} and Theorem \ref{thm:main-reg} utilises an $\eps$-regularity theorem for immersed area minimising hypersurfaces mod $p$ (see Theorem \ref{thm:eps-reg}). This can be viewed as giving an $\eps$-regularity theorem akin to De Giorgi's $\eps$-regularity theorem for area minimising currents. However, we stress that its proof is substantially more complicated, as unlike in the area minimising case one does not appear to be able to reduce to a multiplicity one situation. Indeed, the starting point of our proof is the (possibly branched) $\eps$-regularity theorem of the first author with Wickramasekera \cite{MW24}.

This $\eps$-regularity theorem allows us to also prove the following compactness theorem:

\begin{theorem}\label{thm:main-compactness}
    Let $(T_j)_j$ be a sequence of area minimising hypersurfaces mod $p$ in $B^{n+1}_2(0)$ with $\del ^p T_j = 0$. Suppose that:
    \begin{itemize}
    	\item all classical singularities of each $T_j$ are immersed;
	\item $\limsup_j\|T_j\|(B^{n+1}_2(0))<\infty$.
    \end{itemize}
    Then, there exists a subsequence $\{j^\prime\}$ and $T$ an area minimising hypersurface mod $p$ in $B^{n+1}_2(0)$ with $\del^pT=0$ which is represented by a two-sided smooth immersion outside a closed set $S$ with $\dim_\H(S)\leq n-3$ such that $T_{j^\prime}\rightharpoonup T$ as currents and $T_{j^\prime}\to T$ locally smoothly as immersions in $B^{n+1}_2(0)\setminus S$.
\end{theorem}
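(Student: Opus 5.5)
The plan is to combine standard compactness for area minimising currents mod $p$ with the $\eps$-regularity theorem (Theorem \ref{thm:eps-reg}) and Theorem \ref{thm:main-reg}.

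\emph{Step 1: extract a limit.} We use the compactness theorem for flat chains mod $p$ with locally bounded mass and vanishing mod $p$ boundary (Federer--Fleming, with White's refinement for flat chains mod $p$). This gives a subsequence $T_{j'}$ converging to some $T$ in the flat topology mod $p$. Area minimising mod $p$ passes to such limits by the usual cut-and-paste argument with slicing, so $T$ is area minimising mod $p$ with $\del^p T=0$. Moreover the masses converge, $\|T_{j'}\|\to\|T\|$ as Radon measures, and the associated varifolds converge. In particular, monotonicity and upper semicontinuity of density hold along the sequence.

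\emph{Step 2: show the limit has only immersed classical singularities.} This is the main obstacle, since Theorem \ref{thm:main-reg} needs this hypothesis on $T$. Suppose $x$ is a classical singularity of $T$. Then some tangent cone at $x$ is a sum of half-hyperplanes meeting along an $(n-1)$-dimensional axis. By the classical-singularity structure, $T$ is close near $x$, at small scales, to this cone. Rescaling the $T_{j'}$ about $x$ gives minimisers mod $p$ that are close to the same cone.

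First we rule out the case where the cone is not a union of full hyperplanes. The $T_{j'}$ have only immersed classical singularities and, by Theorem \ref{thm:main-no-bp}, no branch points. By Theorem \ref{thm:main-reg} they are therefore two-sided immersions away from a codimension-$3$ set. Hence their boundaries cancel in pairs: each sheet has no interior boundary, by the mod $p$ orientation structure. A smooth immersed sheet with no boundary cannot converge to a half-hyperplane that terminates along the axis. More concretely, the stationarity/first-variation balance along the axis and the orientation constraint force the half-planes to pair into full hyperplanes.

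To make this precise, I would use a Simon-type classical-cone regularity theorem for minimisers mod $p$ near the cone. It shows that $T_{j'}$ is itself a $C^{1,\alpha}$ perturbation of the cone, so the sheets of $T_{j'}$ have $C^{1,\alpha}$ boundaries. Since $T_{j'}$ is immersed, these must be full sheets, and this structure passes to the limit.

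\emph{Step 3: apply the regularity theorems and upgrade the convergence.} With Step 2 in hand, Theorem \ref{thm:main-reg} gives that $T$ is a two-sided smooth immersion outside a closed set $S$ with $\dim_\H S\le n-3$.

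For $y\notin S$, near $y$ the current $T$ is a finite sum of smooth minimal graphs over a plane, with transverse crossings allowed. There are two cases:
\begin{itemize}
\item If the tangent cone at $y$ is a plane with multiplicity, Theorem \ref{thm:eps-reg} applies to $T_{j'}$ for large $j'$, since they have small excess relative to it. This gives graphical decompositions of $T_{j'}$ with uniform $C^{1,\alpha}$ estimates, and hence smooth estimates by elliptic regularity.
\item If the tangent cone at $y$ is a union of transverse planes, we use the transverse-crossing form of the same regularity result.
\end{itemize}
In either case Arzelà--Ascoli gives local smooth convergence as immersions near $y$, with multiplicities matching by mass convergence. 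A covering of $B_2^{n+1}(0)\setminus S$ completes the proof.
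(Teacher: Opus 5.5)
Your architecture is essentially the paper's. The paper obtains Theorem \ref{thm:main-compactness} by combining compactness of minimisers mod $p$ \cite[Proposition 5.2]{DLHMS20} with Theorem \ref{thm:main-reg-2}. That proof treats planar tangent cones of multiplicity $\le p/2$ by applying Theorem \ref{thm:eps-reg} to the approximating sequence. It treats classical tangent cones by applying the classical-cone $\eps$-regularity \cite[Theorem C]{MW24} to the approximating sequence, which forces the cone to be a sum of $p/2$ hyperplanes and gives smooth convergence as immersions. These are your Steps 2 and 3. In Step 2, only that $\eps$-regularity argument is valid; the heuristics before it are not. Stationarity plus an alternating orientation does not make half-hyperplanes pair up (see Figure \ref{fig:mod-p-classical-singularity}), and boundaryless smooth sheets can in general converge to non-immersed classical cones.

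The gap is that your case split in Step 3 does not cover every point. A point $y\notin S$ where $T$ is smoothly immersed could a priori have tangent cone $\sum_i q_i|P_i|$ with $\dim(P_1\cap\cdots\cap P_N)\le n-2$. An example would be three sheets crossing pairwise along different $(n-1)$-dimensional submanifolds, like three planes in general position in $\R^3$. Such a cone is not a classical cone, so neither Theorem \ref{thm:eps-reg} nor the classical-cone $\eps$-regularity applies, and you have no way to get smooth convergence of $T_{j'}$ near $y$. The statement of Theorem \ref{thm:main-reg}, which you use as a black box, does not exclude such points. Adding them to $S$ would only give $\dim_\H S\le n-2$.

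The missing ingredient is Lemma \ref{lemma:spines}. Such a tangent cone is itself area minimising mod $p$, hence alternating by Lemma \ref{lemma:alternating-check}. An alternating sum of hyperplanes is either a single hyperplane or has $(n-1)$-dimensional spine. For spine dimension $n-2$ this follows from an Euler-formula count on the link graph in $S^2$:
\begin{itemize}
\item every vertex has degree $\ge 4$, so $2E\ge 4V$;
\item the source/sink structure rules out odd cycles, and there are no $2$-cycles, so every face has at least $4$ edges and $2E\ge 4F$;
\item together these contradict $V+F=E+2$.
\end{itemize}
Lower spine dimensions reduce to this case by taking a tangent cone at a point of the $(n-2)$-skeleton. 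This is Case 3 in the paper's proof of Theorem \ref{thm:main-reg-2}, and it is exactly what yields codimension $3$ rather than $2$. Once you add it, your two cases cover all of $B^{n+1}_2(0)\setminus S$ and the argument closes.
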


The $n-3$ bound in Theorem \ref{thm:main-compactness} comes from the dimension bound in Theorem \ref{thm:main-reg}.

When $p$ is odd or $p=2,4$, the above results already follow from known results -- we discuss this and other relevant background in Section \ref{sec:background}. As such, we are primarily interested in the case of even $p\geq 6$.

In fact, all the above results for area minimisers mod $p$ are special cases of more general results we prove for stable minimal hypersurfaces which obey various structural conditions. As such, our proofs \emph{do not} utilise the minimising mod $p$ assumption other than through the verification of these structural conditions, which is rather simple to do. More precisely, the only structural conditions we need in addition to the variational assumptions (stationarity and stability of the smoothly embedded part of the hypersurface) are:
\begin{enumerate}
    \item [(i)] All classical singularities and flat singular points have density $p/2$;
    \item [(ii)] The hypersurface has a special type of `alternating' orientation (see Definition \ref{defn:orientation}).
\end{enumerate}
The philosophy that the regularity theory of area minimising hypersurfaces mod $p$ is subsumed by that of stable minimal hypersurfaces is one which was utilised in the work \cite{MW24} of the first author and Wickramasekera. It can be compared with how the work of Wickramasekera \cite{Wic14} shows that the regularity theory for area minimising hypersurfaces is contained within that of stable minimal hypersurfaces. These more general results are stated in Section \ref{sec:background} and Appendix \ref{app:A}.

\begin{remark}\label{remark:ambient-stable}
Our results show a closeness between area minimising hypersurfaces mod $p$ and two-sided stable minimal immersions. However, we stress that this stability is only \emph{ambient} stability (which in particular implies stability of the regular part), rather than stability \emph{as an immersion}. This difference can be seen in that immersed classical singularities are comprised of \emph{exactly} $p/2$ embedded submanifolds \emph{all} intersecting along a \emph{common} $(n-1)$-dimensional submanifold. Competitors must preserve this structure (to avoid creating an interior boundary mod $p$), whereas generic individual (normal) perturbations of $\geq 3$ hypersurfaces would only intersect along a common submanifold of dimension at most $n-2$. Thus, when $p/2\geq 3$, i.e.~$p\geq 6$, one does not expect stability of the immersion (and one does when $p=4$). This also explains why one cannot hope to prove (easily at least) for $p\geq 6$ an analogous result to White's \cite{White79} regarding the local decomposability of mod $4$ minimising hypersurfaces into mod $2$ minimising hypersurfaces (in the smoothly immersed case) as such a result would imply stability of the immersion (although, as noted this is true when $p=4$). Thus, despite Theorem \ref{thm:main-reg}, the theory of immersed area minimisers mod $p$ for $p\geq 6$ does not appear to be contained within that of stable minimal immersions, and so one cannot use that theory (e.g.~\cite{Bel25}) to immediately improve upon the results in Theorem \ref{thm:main-reg} and Theorem \ref{thm:main-compactness}. For instance, a Simons' classification for minimal hypercones in $\R^{n+1}$ for $4\leq n+1\leq 7$ which are represented by two-sided smooth immersions outside of $0$ and are ambiently stable does not appear to be known.
\end{remark}

\begin{remark}[The Riemannian Case]
Even though the above results are stated locally in Euclidean space, they have analogues in Riemannian manifolds. Being local statements, Theorem \ref{thm:main-no-bp} and Theorem \ref{thm:main-reg} are unchanged, except for the two-sided claim in Theorem \ref{thm:main-reg}. This depends on the ambient space and whether the current has constant multiplicity $p/2$ on its regular part (which is always true when $p=2$ and impossible if $p$ is odd). For instance, if the regular part has multiplicity $<p/2$ the mod $p$ structure still leads to a continuous well-defined unit normal to $T$, but this is no longer true if the multiplicity is $p/2$ (for instance, $\R\text{P}^2\subset \R\text{P}^3$ is area minimising mod $2$). For Theorem \ref{thm:main-compactness}, the same modification concerning two-sidedness is needed, but the other conclusions remain.
\end{remark}

\textbf{Overview of the Proof.} The main technical result we need is the $\eps$-regularity theorem (Theorem \ref{thm:eps-reg}). For this, our starting point is the $\eps$-regularity theorem established in \cite[Theorem A]{MW24}, which gives that when the hypersurface is close to a multiplicity $Q$ ($=p/2$) plane, its structure is described by the graph of a  \emph{multi-valued} function $u:B^n_1(0)\to \A_Q(\R)$. Our aim is then to show that our assumptions imply that this graph decomposes, on all of $B^n_1(0)$, as a sum of (smooth) single-valued minimal graphs.

First, we use the structural conditions to get that the multi-valued graph can be written as a sum of $C^{1,\alpha}$ single-valued functions $u_1,\dotsc,u_Q:B^n_1(0)\to \R$. This is the primary place we use our structural conditions. We then need to show that these $u_i$ solve the minimal surface equation (MSE). We first show that they solve the MSE away from the (closed) $Q$\emph{-critical set}:
$$\mathcal{K} := \{x\in B^n_1(0): u_i(x) = u_j(x) \text{ and }Du_i(x) = Du_j(x)\text{ for all }i, j\}.$$
Of course, if $u_i\equiv u_j$ for all $i, j$ then the hypersurface is just a single minimal graph with multiplicity, so we can assume $\mathcal{K}\neq B^n_1(0)$. We want to show that $u_i$ also solves the MSE across $\mathcal{K}$. As $\mathcal{K}$ is not necessarily a level set of $u_i$, for it to be removable from the PDE we want to show that it is sufficiently small. Knowing $\H^{n-1}(\mathcal{K})=0$ would suffice. It has recently been established in \cite{DLHMS+22,KMW} that $\dim_\H(\mathcal{K})\leq n-2$, and so one could simply apply this result to conclude. However, in the present setting we are instead able to give a much simpler proof of this bound following more classical PDE methods which we believe to be of independent interest. In particular, our proof \emph{does not} use Almgren's center manifold in any form (we stress that both \cite{DLHMS+22, KMW} utilise a center manifold).

To describe our approach, we look at $v:=u_i-u_j$ for some $u_i\neq u_j$. We then have
$$\mathcal{K} \subseteq \{v=0, Dv=0\}$$
(in fact, in our setting this is an equality). The advantage of working with $v$ is that since $\mathcal{K}$ is now contained within a level set of $v$, one has hope to extend the PDE which $v$ satisfies across $\mathcal{K}$ to all of $B^n_1(0)$, and thus one can hope to show that $\mathcal{K}$ is small by using frequency function techniques for $v$. The two issues are: (i) $v$ is only $C^{1,\alpha}$; and (ii) the coefficients in the PDE satisfied by $v$ are only $C^{0,\alpha}$. One needs $\alpha=1$, so that the $u_i$ are $C^{1,1}$, in order to make progress. We achieve this by using the planar frequency function for the original $Q$-valued stationary graph $u$ (cf.~\cite{KMW}) combined with the observation that tangent maps must be homogeneous harmonic polynomials. Our proof of this latter fact in turn uses that a $C^1$ function which is harmonic away from its critical set must necessarily be harmonic across its critical set. We note that whilst the planar frequency function was first introduced in the work of Krummel--Wickramasekera for area minimising currents \cite{KW23b}, our use here, like in \cite{KMW}, is for a special class of \emph{stationary} objects. Once the $C^{1,1}$ regularity of $u_i$ has been established, one can not only extend the PDE satisfied by $v$ to being weakly satisfied on all of $B^n_1(0)$, but one has sufficient regularity of the coefficients in the PDE in order to prove monotonicity of the frequency function for $v$ as in Garofalo--Lin \cite{GarofaloLin86}. This allows us to prove $\dim_\H(\mathcal{K})\leq n-2$, and hence complete the proof of the $\eps$-regularity result.

\begin{remark}\label{remark:decomp}
The situation which arises in the above argument raises the following question: if $u_1,\dotsc,u_Q:B^n_1(0)\to \R^k$ are $C^{1,\alpha}$ with $V = \sum^Q_{i=1}\mathbf{v}(u_i)$ being stationary, must each $\mathbf{v}(u_i)$ be stationary?\footnote{Recall that $\mathbf{v}(u_i)$ denotes the (multiplicity one) varifold associated to the graph of $u_i$.} One difficulty lies in not knowing the size of the $Q$-touching set. The argument sketched above answers a special case of this. We suspect that this more general question also has a positive answer, although we do not pursue this here. Note that this question is false if each $u_i$ is only Lipschitz (for example, take the union of the graphs of $u_\pm(x) := \pm |x|$ for $x\in \R$). We believe it should also be true if each $u_i$ is $C^1$; certainly in the $Q=2$ one may first apply the results in \cite{BKMW25} to upgrade the regularity of each $u_i$ to $C^{1,\alpha}$, thus reducing to the $C^{1,\alpha}$ case.
\end{remark}

\textbf{Structure of the Paper.} For the remainder of this introduction we discuss the background of the problem, both for area minimising hypersurfaces mod $p$ and, more generally, stable minimal hypersurfaces. We also state our main $\eps$-regularity result (Theorem \ref{thm:eps-reg}). We then prove our main $\eps$-regularity theorem in Section \ref{sec:proof}, and subsequently prove all other results stated in the introduction in Section \ref{sec:other}. In Section \ref{sec:rest} we discuss what can be said in the general situation where one allows non-immersed classical singularities, and pose several open questions. Finally, in Appendix \ref{app:A} we prove generalisations of our results in the stable setting (where branch points can occur).

\textbf{Acknowledgements:} This research was conducted during the period PM served as a Clay Research Fellow. The research of SS was supported by the Engineering and Physical Sciences Research Council (EPSRC) - EP/W524633/1. The authors would like to thank Mattia Luchese and Neshan Wickramasekera for comments on an earlier version of the manuscript.

\subsection{Background}\label{sec:background}

Area minimising hypersurfaces mod $p$ are the natural objects minimising area when one works with the coefficient group $\Z_p$ rather than $\Z$. Some key features for small $p$ are:
\begin{itemize}
	\item area minimisers mod $2$: allow unoriented surfaces (e.g.~$\R \text{P}^2\subset \R \text{P}^3$);
	\item area minimisers mod $3$: can admit triple junction singularities;
	\item area minimisers mod $4$: locally decompose as the sum of area minimisers mod $2$ (\cite{White79}).
\end{itemize}
In particular, the latter result of White shows that area minimising hypersurfaces mod $4$ do not have branch points. The present work recovers this corollary without using the minimising assumption, other than through the verification of comparatively simple structural properties\footnote{This follows from Theorem \ref{thm:main-no-bp} (cf.~Theorem \ref{thm:eps-reg}), as all classical singularities are immersed in area minimising hypersurfaces mod $4$.}.

Singularities within area minimising hypersurfaces mod $p$ are rather constrained. For instance, it follows from \cite[Proposition 3.4]{DLHMSS21} and a simple competitor argument that any flat point of an area minimising hypersurface mod $p$ must have density at most $p/2$. Consequently, the density of an area minimising hypersurface mod $p$ is $\leq p/2$ almost everywhere. In fact, one has:

\begin{enumerate}
	\item [(a)] Any flat singular point has density \emph{exactly} $p/2$ (indeed, there is an $\eps$-regularity theorem near hyperplanes of multiplicity $<p/2$ \cite{White84}). In particular, if $p$ is odd or $p=2$ then $T$ has no flat singular points (the $p=2$ case follows from Allard regularity \cite{Allard72}).
	\item [(b)] Any classical singularity has density \emph{exactly} $p/2$. Furthermore, each (oriented) hypersurface in the classical singularity induces the same Stokes' orientation at the common boundary (see \cite[Proposition 3.5]{DLHMSS21}).
\end{enumerate}
These insights have led to advances in the regularity theory. Indeed, \cite{DLHMSS21} proves an $\eps$-regularity theorem near any classical cone (of density $p/2$). It is for these reasons why Theorem \ref{thm:main-no-bp}, Theorem \ref{thm:main-reg}, and Theorem \ref{thm:main-compactness} already follow from known results when $p=2,4$ or $p$ is odd. Furthermore, \cite{DLHMS+22} proves that the flat singular set has Hausdorff dimension $\leq n-2$.

In fact, these regularity results are special cases of results which hold for stable minimal hypersurfaces. Indeed, the emerging philosophy is that stationarity and stability (of the regular part) combined with restrictions on the possible classical singularities leads to strong regularity conclusions in codimension one. As mentioned in (b) above, we know (from simple $1$-dimensional comparison arguments) that area minimisers mod $p$ only contain classical singularities of density exactly $p/2$. It is therefore natural to study stable minimal hypersurfaces which only contain classical singularities of a fixed density. In this direction:
\begin{enumerate}
	\item [(A)] The work of Wickramasekera \cite{Wic14} implies the $\eps$-regularity theorems of \cite{DLHMSS21} and \cite{White84} (cf.~\cite[Theorems B and C]{MW24}, and Lemma \ref{lemma:alternating-check} for constancy of the orientation);
	\item [(B)] The work of the first author and Wickramasekera \cite{MW24} proves an $\eps$-regularity theorem, as a multi-valued graph, for stable minimal hypersurfaces near hyperplanes of multiplicity $Q$ provided there are no classical singularities of density $<Q$ (which applies to area minimising hypersurfaces mod $p$);
	\item [(C)] The work of Krummel, the first author, and Wickramasekera \cite{KMW} proves a general Hausdorff dimension bound on the singular set for multi-valued minimal graphs satisfying an $\eps$-regularity property. In particular, by (B) above, this applies to area minimising hypersurfaces mod $p$ and recovers the dimension bound in \cite{DLHMS+22}.
\end{enumerate}
The present work builds upon these links between stable minimal hypersurfaces and area minimisers mod $p$ by now making use of the special type of orientation that an area minimiser mod $p$ has.

First, we recall the definition of $\S_Q$ from \cite{MW24}\footnote{Throughout, we refer the reader to \cite{MW24} for more description of the notation and language where necessary.}. The class $\S_Q$ is the set of all integral varifolds $V$ in $B^{n+1}_2(0)$ with $0\in \spt\|V\|$, $\|V\|(B^{n+1}_2(0))<\infty$, and which obey:
\begin{enumerate}
	\item [$(\S1)$\ \ ] $V$ is stationary in $B^{n+1}_2(0)$;
	\item [$(\S2)$\ \ ] $\reg(V)$ is stable on any open $U\subseteq \{\Theta_V<Q+1/2\}$ for which $\dim_\H(\sing(V)\cap U)\leq n-7$;\footnote{In \cite{MW24}, the stability condition is required on any open $U\subset B^{n+1}_2(0)$ for which $\dim_\H(\sing(V)\cap U)\leq n-7$, i.e.~there is no additional requirement that $U\subseteq\{\Theta_V<Q+1/2\}$. However, this additional requirement does not impact any of the arguments in \cite{MW24}, and here it is slightly more convenient to work with this weaker assumption.}
	\item [$(\S3)_Q$] $V$ does not contain any classical singularities of density $<Q$.
\end{enumerate}
Note that if $T$ is area minimising mod $p$, then the natural varifold associated to $T$ belongs to $\S_{p/2}$. In fact, this varifold lies in a subclass of $\S_{p/2}$ of varifolds whose regular part admits an orientation with a certain `alternating' property which induces a bipartite structure on the connected components of $\reg(T)$ with multiplicity $<p/2$. This is a global structural property satisfied by area minimisers mod $p$ in addition to the (inherently local) property $(\S3)_{p/2}$ which we make critical use of.
\vspace{-0.5em}
\begin{defn}\label{defn:orientation}
We say that $V\in \S_Q$ is \emph{alternating} if $\reg(V)\cap\{\Theta_V<Q\}$ has an orientation which is continuous on the connected components of $\reg(V)\cap\{\Theta_V<Q\}$ with the following property: if $x$ is a density $Q$ classical singularity of $V$ and $\{M_i\}_{i=1}^{2Q}$ are the oriented submanifolds-with-boundary whose union is equal to the support of $V$ in a neighbourhood of $x$, then the Stokes' orientation on the (interior) boundaries of each $M_i$ all coincide. 
\end{defn}
\vspace{-0.5em}
We now prove that the natural varifold associated to an area minimising hypersurface mod $p$ is alternating.

\begin{lemma}\label{lemma:alternating-check}
Suppose $T$ is an area minimising hypersurface mod $p$ in $B^{n+1}_2(0)$, with $\partial^pT=0$. Let $V$ be the natural varifold associated to $T$. Then $V\in \S_{p/2}$ is alternating.
\end{lemma}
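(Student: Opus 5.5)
The argument has two parts. First I verify that $V\in\S_{p/2}$, which is essentially a list of known facts. Then I construct the alternating orientation, using the orientation carried by $T$ itself on the set where $\Theta_V<p/2$.

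\textbf{Membership in $\S_{p/2}$.} Stationarity $(\S1)$ holds because $T$ is area minimising mod $p$: comparing $T$ with its pushforwards under ambient diffeomorphisms close to the identity preserves $\del^pT=0$, so the first variation of $V$ vanishes. For stability $(\S2)$, take an open $U$ on which $\reg(V)$ is smoothly embedded. On each connected component of $\reg(V)\cap U$ the current is a constant integer multiple $m$ of an oriented smooth hypersurface, with $m\le p/2$ after choosing the representative mod $p$. A compactly supported normal deformation of that component gives a competitor with the same mod-$p$ boundary, so the second variation of area is nonnegative. This is exactly stability of $\reg(V)$, weighted by constant multiplicity. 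For $(\S3)_{p/2}$, fact (b) above (from \cite[Proposition 3.5]{DLHMSS21}) says that every classical singularity has density exactly $p/2$, so none has density $<p/2$. Finally, $0\in\spt\|V\|$ and the mass bound are standing normalisations.

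\textbf{The alternating orientation.} On $\reg(V)\cap\{\Theta_V<p/2\}$, $T$ is locally $m\mb{M}$ with $M$ oriented and $1\le m<p/2$. The representative is unique mod $p$, and the ambiguity $m\mb{M}\equiv -(p-m)\mb{M}$ is removed by insisting on multiplicity $<p/2$. This pins down the orientation of $M$ uniquely, so the orientation is well defined. It is also continuous along each connected component, since multiplicity is locally constant on the regular part. The requirement $\Theta<p/2$ is exactly what is needed here: at multiplicity $p/2$ both orientations are admissible, and no canonical choice exists.

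\textbf{Behaviour at a classical singularity.} Let $x$ be a density-$p/2$ classical singularity, with sheets $M_i$, $i=1,\dots,2Q$, near $x$. I apply fact (b) (\cite[Proposition 3.5]{DLHMSS21}): each oriented sheet induces the same Stokes orientation on the common boundary. I expect the main obstacle to be checking that the orientations in that proposition agree with the canonical ones fixed above. The point is that density $p/2$ at $x$, combined with the $C^{1,\alpha}$ structure of a classical singularity, should force each sheet $M_i$ to carry multiplicity $m_i<p/2$. This holds for all $p$ except $p=2$, where there are no classical singularities, since the total density $\tfrac12\sum_i m_i$ equals $p/2$ and there are at least three half-sheets.

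The proposition is stated for representatives with multiplicity in $(0,p/2]$. With multiplicity $<p/2$ that representative is unique, so its orientation is exactly ours. If instead the proposition is phrased via the boundary computation $\del T=\sum_i m_i\,\del\mb{M_i}\equiv 0 \pmod p$, I would reprove it by a 1-dimensional slicing and comparison argument near $x$. The slice is a configuration of weighted rays $m_ie_i$ meeting at a point. If some ray pointed inward, so that its Stokes orientation disagreed with the others, one could cancel part of the weights and strictly reduce mass while preserving the mod-$p$ boundary, contradicting minimality. This gives the coincidence of Stokes orientations required in Definition \ref{defn:orientation}.
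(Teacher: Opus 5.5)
There is a genuine gap, and it sits exactly at the step the paper treats as the main point. You assert that on $\reg(V)\cap\{\Theta_V<p/2\}$ the current is locally $m\mb{M}$ with $M$ oriented. You then say the induced orientation is continuous ``since multiplicity is locally constant on the regular part.''

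The constancy theorem for the stationary varifold only gives $\|T\|=k\,\H^n\res M$ on a connected regular piece $M$. In other words it gives $|\theta|=k$ for the multiplicity function of the representative. A priori one can still have $T\res B = k\mb{M\cap B}\res E - k\mb{M\cap B}\res(M\setminus E)$ for an arbitrary measurable $E\subset M$. Uniqueness of the representative with multiplicity $<p/2$ only fixes $\vec{T}$ pointwise almost everywhere, not continuously.

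The inference from constant multiplicity to continuous orientation is false in general. At multiplicity $p/2$ a fixed representative can flip orientation on any measurable set, since this only creates boundary $p\,\partial(\mb{M}\res E)\equiv 0$ mod $p$. So the hypothesis $k<p/2$ must enter through $\partial^pT=0$, which your argument never uses at this stage.

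The paper does this as follows. It writes $T\res B = 2k\mb{M\cap B}\res E - k\mb{M\cap B}$. Then $E$ is a Caccioppoli set in $M$, and $\|\partial T\|\res B = 2k\,\H^{n-1}\res\partial^*E$. Since $0<2k<p$, this is incompatible with $\partial^pT=0$ unless $\H^{n-1}(\partial^*E)=0$, which gives continuity of the orientation. A constancy theorem for flat chains mod $p$ would be an acceptable substitute, but some input combining the mod $p$ boundary condition with $2k<p$ is required.

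Once continuity is established, the step you flagged as the main obstacle is routine. The sheets at a density $p/2$ classical singularity carry the continuous orientation induced by $T$, and the description in \cite[Section 3]{DLHMSS21} applies directly.

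One smaller correction concerns your claim that each half-sheet has $m_i<p/2$. The facts $\tfrac12\sum_i m_i=p/2$ and at least three half-sheets do not by themselves give this; for example $p=6$ with $(m_i)=(3,2,1)$ satisfies both. You need the balancing $\sum_i m_i\eta_i=0$ of the unit conormals. Indeed, $m_1\geq p/2$ would force $m_1=|\sum_{j\neq 1}m_j\eta_j|\leq p-m_1\leq m_1$. Hence $\eta_j=-\eta_1$ for all $j\neq 1$, and the tangent cone would be a hyperplane rather than a classical cone.
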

\begin{proof}
The main point is to check that $T$ has no interior \emph{integral} boundary on a region where $V$ is represented by a smooth (minimal) hypersurface $M$ of constant multiplicity $k<p/2$. By the constancy theorem for rectifiable currents \cite[4.1.7]{Fed69}, this then implies that the orientation which $T$ induces on $M$ is continuous (in fact, the same is also true on $\{\Theta_V<p/2\}$, since $\dim_\H(\sing(V)\cap \{\Theta_V<p/2\})\leq n-7$). The result then follows from the description of classical singularities established in \cite[Section 3]{DLHMSS21}. We note that one could refer to \cite[Theorem 3.1]{White84} for this, but the present situation is much simpler as we are assuming regularity a priori.

Suppose $\del T\res M \neq 0$, and let $B$ be an open ball centred on a point in $\spt\|\del T\|\cap M$ where $\del T \res B \neq 0$. Fix an orientation of $M$ in $B$ (which is possible due to $M$ being an embedded in $B$). Let $E$ be the set of points in $M\cap B$ where the orientation of $T$ agrees with the chosen orientation on $M$. By the constancy theorem, $T$ has a continuous orientation in $M\setminus \spt\|\del T\|$, and so $T\res B$ decomposes as
$$T\res B = 2k\llbracket M\cap B\rrbracket \res E - k\llbracket M\cap B\rrbracket.$$
Since $M\cap B$ has no interior boundary, $E$ is a Caccioppoli set, and thus $\|\del T\|\res B = 2k\cdot\H^{n-1}\res \del^*E$ (cf.~\cite[Section 14]{Simon83Lectures}). As $\del^*E$ has unit $\H^{n-1}$-density everywhere this implies that $\del T\res B$ has density $2k$ at $\H^{n-1}$-a.e.~point of $\spt\|\del T\|\cap B$. However, we need $\del T = 0$ mod $p$ which, since $2k<p$, is only possible if $\H^{n-1}\res \del^*E = 0$, and hence $\del T\res B = 0$, which is a contradiction.
\end{proof}

We digress to make some further remarks on Definition \ref{defn:orientation}:
\begin{enumerate}
\item [(a)] If $T$ is an area minimising hypersurface mod $p$, one cannot infer anything about the induced orientation on a regular component of multiplicity $p/2$, as one may arbitrarily change the unit normal without creating an interior boundary mod $p$. We stress that a significant difficulty in the present work is the possibility that a connected component of $\reg(V)$ with multiplicity $p/2$ could be bounded entirely by branch points. One morally needs a dimension bound on the branch set to say this cannot happen.

\item [(b)] As we are in codimension one, an orientation of $\reg(V)$ is a choice of unit normal. Being alternating is equivalent to a continuous choice of unit normal (on $\reg(V)$) such that the unit conormals to the boundary of a classical singularity must either all be oriented towards the boundary or all away from the boundary. As such, one may call a classical singularity a \emph{source} (respectively \emph{sink}) when all conormals are oriented \emph{away} (respectively \emph{towards}) from the classical singularity (see Figure \ref{fig:source/sink}).

Moreover, when the classical singularity is \emph{immersed}, the alternating condition simply means that the unit normal along each (embedded) submanifold of the classical singularity changes sign across the classical singularity.

\begin{figure}[h]
	\centering
	\begin{tikzpicture}
		\filldraw (0,0) circle (0.05);
		\draw [thick]  (0,0) -- (1,0.15);
		\draw [thick, <-] (1,0.15) -- (2,0.3);
		\draw [thick] (0,0) -- (-1,0.15);
		\draw [thick, <-] (-1,0.15) -- (-2,0.3);
		\draw [thick] (0,0) -- (0.5,0.7);
		\draw [thick, <-] (0.5,0.7) -- (1,1.4);
		\draw [thick] (0,0) -- (0.25,-0.8);
		\draw [thick, <-] (0.25,-0.8) -- (0.5,-1.6);
		\draw [thick] (0,0) -- (-0.9,0.35);
		\draw [thick, <-] (-0.9,0.35) -- (-1.8,0.7);
		\draw [thick] (0,0) -- (-0.6, -0.55);
		\draw [thick, <-] (-0.6,-0.55) -- (-1.2,-1.1);
		
		\filldraw (7,0) circle (0.05);
		\draw [thick, ->]  (7,0) -- (8,0.15);
		\draw [thick] (8,0.15) -- (9,0.3);
		\draw [thick, ->] (7,0) -- (6,0.15);
		\draw [thick] (6,0.15) -- (5,0.3);
		\draw [thick, ->] (7,0) -- (7.5,0.7);
		\draw [thick] (7.5,0.7) -- (8,1.4);
		\draw [thick, ->] (7,0) -- (7.25,-0.8);
		\draw [thick] (7.25,-0.8) -- (7.5,-1.6);
		\draw [thick, ->] (7,0) -- (6.1,0.35);
		\draw [thick] (6.1,0.35) -- (5.2,0.7);
		\draw [thick, ->] (7,0) -- (6.4, -0.55);
		\draw [thick] (6.4,-0.55) -- (5.8,-1.1);
	\end{tikzpicture}
	\caption{\footnotesize A classical singularity with a sink orientation (left) and a source orientation (right).}
	\label{fig:source/sink}
\end{figure}
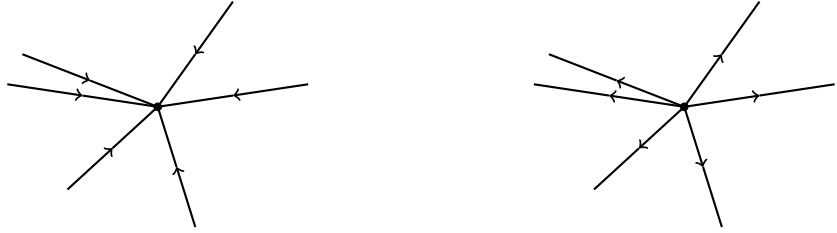

	\item [(c)] In a domain $\Omega$ where $V\in\S_Q$ only has regular points and immersed classical singularities of density $Q$, being alternating forces that $V\res\Omega$ is represented by a \emph{two-sided} smooth immersion. Indeed, the local structural conditions imply that $V\res\Omega$ is represented by the image of a smooth immersion $\iota:M\to B^{n+1}_2(0)$. To see that $M$ is two-sided, notice that (the local image of) $M$ has a well-defined continuous unit normal due to the alternating condition inducing a bipartite structure on the set of connected components of $\reg(V)$ (note that the unit normal to $\reg(V)$ is also extended across immersed classical singularities). Furthermore, if any regular part where to have multiplicity $p/2$, then there can be no classical singularities, which forces $V$ to be smoothly embedded outside a codimension $\geq 7$ singular set, and so it is orientable.
	 
	\item [(d)] There exist $V\in \S_Q$ which are not alternating. Indeed, examples of branched stable minimal hypersurfaces which are modelled on $\text{Re}(z^{3/2})$ have been constructed (see \cite{SW07}) which cannot be alternating.
\end{enumerate}

\vspace{-0.5em}
\begin{defn}
Write $\S_Q^\pm$ for the set of all $V\in \S_Q$ which are alternating and for which all density $Q$ classical singularities are immersed.
\end{defn}
\vspace{-0.5em}

By Lemma \ref{lemma:alternating-check}, the natural varifold associated to an area minimising hypersurface mod $p$ in $B^{n+1}_2(0)$ with all classical singularities immersed belongs to $\S_{p/2}^\pm$.

\subsection{Main Results for $\S_Q^\pm$}

Our main $\eps$-regularity theorem is for the class $\S_Q^\pm$. Recall the notation for the \emph{$L^2$-height excess} $\hat{E}_V:= \Bigl(\int_{\R\times B^n_1(0)}|x^1|^2\, \ext\|V\|(x)\Bigr)^{1/2}$.

\begin{theorem}\label{thm:eps-reg}
	There exists $\eps = \eps(n,Q)\in (0,1)$ such that the following is true. Suppose $V\in \S_Q^{\pm}$ is such that $(2^n\w_n)^{-1}\|V\|(B^{n+1}_2(0))<Q+\frac{1}{2}$ and
	$$\dist_\H(\spt\|V\|\cap (\R\times B^n_1(0)), \{0\}\times B^n_1(0))<\eps.$$
	Then, for some $q\in \{1,\dotsc,Q\}$, there exist smooth functions $u_1,\dotsc,u_q:B^n_{1/2}(0)\to \R$ solving the minimal surface equation for which
	$$V\res (\R\times B^n_{1/2}(0)) = \sum^q_{i=1}\mathbf{v}(u_i) \qquad \text{and} \qquad \|u_i\|_{C^\ell} \leq C(n,Q,\ell)\hat{E}_V \quad \text{for all }\ell\geq 0.$$
\end{theorem}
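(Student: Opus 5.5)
We start from the multi-valued $\eps$-regularity theorem \cite[Theorem A]{MW24}. Since $\S_Q^\pm\subseteq\S_Q$, for $\eps$ small it gives $V\res(\R\times B^n_{3/4}(0))=\mathbf{v}(u)$. Here $u:B^n_{3/4}(0)\to\A_Q(\R)$ is a $C^{1,\alpha}$ $Q$-valued function with $\|u\|_{C^{1,\alpha}}\leq C\hat E_V$, and it is smooth and minimal away from a closed branch/singular set of dimension $\leq n-2$ (cf.~(C)). If $\Theta_V<Q$ at some point the situation is covered by \cite{Wic14} and (A), so the main case is density $Q$ points.

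\textbf{Step 1: splitting into single-valued $C^{1,\alpha}$ sheets.} Since we are in codimension one, the values of $u(x)$ are real and can be ordered: $u_1\leq\dots\leq u_Q$. Each ordered function is Lipschitz, but not obviously $C^1$: where $u_i$ and $u_{i+1}$ cross transversely, ordering creates a corner. The crossings are exactly the immersed classical singularities of density $Q$ (by $(\S3)_Q$, lower-density classical singularities are absent). At such a crossing all $Q$ sheets pass through a common $(n-1)$-dimensional set, and the alternating orientation says each sheet's unit normal flips sign across it.

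We therefore do not label by order. Instead we take the graphs of the $u_i$ with the orientation from Definition \ref{defn:orientation}, and let the orientation pick out the sheets. The unit normal has an "upward" or "downward" sign, and the sign changes exactly when a sheet passes through an immersed crossing. So we follow sheets continuously through crossings, i.e.~we use the smooth immersion structure from remark (c). Concretely, on $\{\Theta<Q\}\cup\{\text{immersed classical singularities}\}$ the graph is a two-sided immersion and can be parametrised by $Q$ globally $C^{1,\alpha}$ single-valued functions. Two-sidedness over the simply connected ball $B^n_{3/4}$ rules out monodromy.

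The remaining points are density $Q$ flat points, where all values and gradients coincide. Those values are continuous there, so the labelled functions extend $C^{1,\alpha}$ across them, using the uniform $C^{1,\alpha}$ estimate for $u$. The delicate part is checking that sheets of multiplicity-$Q$ regular components, whose orientation is unconstrained (remark (a)), do not spoil the labelling. Such a component has $u_1=\dots=u_Q$ there, so any labelling works.

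\textbf{Step 2: minimal surface equation off the $Q$-critical set.} Away from $\mathcal{K}=\{u_i=u_j,\ Du_i=Du_j\ \forall i,j\}$, each point is either a regular point or an immersed crossing. In both cases each $u_i$ is locally a smooth minimal graph. At a crossing, each embedded sheet is stationary by the local classical-singularity structure.

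\textbf{Step 3 (main obstacle): removing $\mathcal{K}$.} It suffices to prove $\H^{n-1}(\mathcal{K})=0$, since a $C^{1,\alpha}$ weak solution of the MSE off a closed set of zero $(n-1)$-capacity, with bounded gradient, solves it everywhere. Assuming the $u_i$ are not all equal, fix $u_i\not\equiv u_j$ and set $v=u_i-u_j$. Then $v$ solves a linear divergence-form equation $\div(A\,Dv)=0$ off $\mathcal{K}$, with coefficients obtained by subtracting MSEs, and $\mathcal{K}\subseteq\{v=0=Dv\}$.

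First I would upgrade to $C^{1,1}$. Use the planar frequency function for the stationary $Q$-valued graph, as in \cite{KMW}, to get frequency $\geq 2$ and blow-ups at points of $\mathcal{K}$. Tangent maps are $Q$-tuples of $C^1$ functions, harmonic off their joint critical set, hence harmonic everywhere, hence homogeneous harmonic polynomials of degree $\geq 2$. This yields decay $\sup_{B_r(x)}|u_i-\ell_x|\leq Cr^2$, where $\ell_x$ is the common tangent affine function, and then $C^{1,1}$ by standard interpolation.

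With Lipschitz coefficients, $v$ weakly solves the equation on all of $B^n_{1/2}$, since $\mathcal{K}$ lies in $\{v=0,Dv=0\}$ and the boundary terms vanish. The Garofalo--Lin frequency \cite{GarofaloLin86} is then monotone, and unique continuation plus the Almgren stratification dimension reduction give $\dim_\H\{v=0=Dv\}\leq n-2$. Hence each $u_i$ solves the MSE on $B^n_{1/2}$, so it is smooth. Discarding duplicates gives $q$ distinct functions, and Schauder/elliptic estimates give $\|u_i\|_{C^\ell}\leq C\hat E_V$.
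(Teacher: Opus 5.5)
Your Steps 2 and 3 follow the paper's argument: planar frequency plus harmonicity of tangent maps give $C^{1,1}$, and then Garofalo--Lin applied to $u_i-u_j$ with Lipschitz coefficients gives $\dim_{\mathcal{H}}\mathcal{K}\leq n-2$. Step 1, however, is the only place the alternating hypothesis enters, and it is not correctly justified. Everything you invoke there also holds for the non-alternating branched examples of \cite{SW07} modelled on $\mathrm{Re}(z^{3/2})$. Those examples have all classical singularities immersed, yet they do not split. Each of your three ingredients fails to distinguish them:
\begin{itemize}
\item A graphical immersion is automatically two-sided, since the upward normal is a continuous unit normal. So two-sidedness carries no information.
\item The immersion only lives over $B^n\setminus\mathcal{K}$, not over the ball, and a priori this set need not be simply connected. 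The monodromy of a branch point is exactly around a codimension-two $\mathcal{K}$.
\item A per-sheet up/down sign that flips each time a sheet passes through a crossing also exists for $\mathrm{Re}(z^{3/2})$. Continuing a sheet through the three crossing rays runs through all six half-sheets in a single cycle of even length, so alternating signs along it are consistent.
\end{itemize}

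What is missing is the real content of Definition \ref{defn:orientation}. At a density-$Q$ crossing, all $2Q$ half-sheets induce the same Stokes orientation on the common boundary. For nearly horizontal graphs this forces every sheet over the same side to carry the same up/down sign. So the sign is a function $s$ on the base $\mathcal{R}$, constant on components and flipping across $\mathcal{C}$. This is a two-colouring of the regions, which is impossible for the three sectors of $\mathrm{Re}(z^{3/2})$.

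Combine this with a second fact: passing through an immersed crossing reverses the height order of the sheets, $j\mapsto Q-j+1$. This follows from the Hopf boundary lemma together with the tangent cone being a sum of $Q$ hyperplanes. Then the selection $w_j=\widetilde{u}_j$ on $\{s=+1\}$, $w_j=\widetilde{u}_{Q-j+1}$ on $\{s=-1\}$, and the common value on $\mathcal{C}\cup\mathcal{K}$ is defined globally with no path-dependence. One then checks directly that it is $C^1$ across $\mathcal{C}$. No topological monodromy argument is needed, and none is available.

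Two smaller points:
\begin{itemize}
\item Do not discard duplicate sheets at the end. The $q$ in the statement is the multiplicity, and $V$ is the sum of the graphs counted with repetition.
\item Harmonicity of the tangent-map sheets across their joint critical set does not follow sheet by sheet. It comes from applying the removability result to the differences $v^{(k)}-v^{(\ell)}$, whose joint zero-and-critical set contains $\mathcal{K}$.
\end{itemize}
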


The assumptions of Theorem \ref{thm:eps-reg} imply that $V$ must be close, as varifolds, to a plane of some integer multiplicity $q\leq Q$ in $\R\times B^{n}_1(0)$. If $q<Q$, then the conclusion of Theorem \ref{thm:eps-reg} is already known -- see \cite[Theorem B]{MW24} (the special case for area minimisers mod $p$ follows from \cite{White84}). Therefore, we only need to focus on the case when $q=Q$; in particular, $Q$ must be an integer.

Clearly Theorem \ref{thm:eps-reg} implies Theorem \ref{thm:main-no-bp}. From Theorem \ref{thm:eps-reg} we can also prove a regularity and compactness theorem, namely:

\begin{theorem}\label{thm:main-reg-2}
	Suppose $(V_j)_j\subset \S_Q^\pm$ satisfy $\limsup_j\|V_j\|(B^{n+1}_2(0))<\infty$. Then, there exists a subsequence $\{j^\prime\}$ and $V\in \S_Q^\pm$ for which $V_{j^\prime}\rightharpoonup V$ as varifolds in $B^{n+1}_2(0)$ and:
	\begin{enumerate}
		\item [\textnormal{(i)}] $\spt\|V\| = \mathcal{R}\cup W_{1}\cup W_{2}$, where:
		\begin{itemize}
			\item $\mathcal{R}$ consists of all regular points of multiplicity $\leq Q$ and smoothly immersed points of multiplicity $Q$ (in particular $\mathcal{R}$ is open);
			\item $W_{1}$ consists of all singular points where every tangent cone $\BC$ has spine dimension $\leq n-3$, the multiplicity of each connected component of $\reg(\BC)$ is $\leq Q$, all classical singularities in $\BC$ (if there are any) have density $Q$, and $\BC$ is not supported on a union of hyperplanes;
			\item $W_{2}$ consists of all singular points where at least one tangent cone $\BC$ either is a hyperplane of multiplicity $>Q$, or contains a classical singularity with density $>Q$, or is a cone with spine dimension $\leq n-3$ and has a regular part has multiplicity $>Q$. 
		\end{itemize}
		\item [\textnormal{(ii)}] $V\res \mathcal{R}$ is represented by a two-sided smooth immersion, and $V_{j^\prime}\to V$ locally smoothly (as an immersion) on $\mathcal{R}$.
	\end{enumerate}
\end{theorem}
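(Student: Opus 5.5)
The proof has three parts: first get a limit $V\in\S_Q^\pm$, then classify points by their tangent cones, and finally obtain smooth convergence on $\mathcal{R}$ from Theorem \ref{thm:eps-reg}.

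\textbf{Compactness of the class.} The mass bound and Allard's compactness theorem give a subsequence $V_{j'}\rightharpoonup V$, with $V$ integral and stationary, so $(\S1)$ holds. That $V\in\S_Q$, i.e.~$(\S2)$ and $(\S3)_Q$ pass to the limit, should follow from the compactness theory in \cite{MW24}, which is built from the sheeting and minimum distance theorems there. Closure of the alternating condition and of "all density $Q$ classical singularities are immersed" is handled locally:
\begin{itemize}
\item Near a classical singularity of $V$ of density $Q$, the $\eps$-regularity near classical cones (\cite[Theorems B,C]{MW24}, via \cite{Wic14}) shows that $V_{j'}$ is, nearby, a $C^{1,\alpha}$ perturbation of the same cone. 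Hence the singularities of $V_{j'}$ there are classical of density $Q$, and so immersed by hypothesis.
\item Immersedness of the cone, meaning it is a union of hyperplanes, is preserved under this $C^{1,\alpha}$ closeness.
\item The Stokes orientations converge, because the unit normals of $V_{j'}$ converge on compact subsets of $\reg(V)\cap\{\Theta_V<Q\}$ by Schoen--Simon/\cite{Wic14} sheeting. A sign cannot flip in the limit, since that would contradict continuity on connected components together with the coincidence of conormals at the approximating classical singularities.
\end{itemize}
So $V\in\S_Q^\pm$.

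\textbf{Stratification.} Let $x\in\spt\|V\|$ and consider its tangent cones $\BC$, which lie in $\S_Q^\pm$-type classes by the same closure argument. If some $\BC$ has a multiplicity $>Q$ plane, a density $>Q$ classical singularity, or a spine of dimension $\leq n-3$ with a regular part of multiplicity $>Q$, then $x\in W_2$ by definition. Otherwise, by Almgren stratification and the structure of cones in $\S_Q$, every $\BC$ falls into one of three cases:
\begin{itemize}
\item[(a)] $\BC$ is a plane of multiplicity $\leq Q$;
\item[(b)] $\BC$ is a classical cone of density $Q$, hence, being in the limit class, a union of hyperplanes through a common $(n-1)$-space;
\item[(c)] $\BC$ has spine dimension $\leq n-3$.
\end{itemize}
Spine dimension $n-2$ is excluded because in codimension one such cones are classical or planar, as in \cite{Wic14,MW24}.

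In case (a), Theorem \ref{thm:eps-reg} (or \cite[Theorem B]{MW24} when $q<Q$) shows $V$ is a finite sum of smooth minimal graphs near $x$, so $x\in\mathcal{R}$. In case (b), the classical cone $\eps$-regularity gives an immersed classical singularity, so $x\in\mathcal{R}$. If instead some tangent cone is a union of hyperplanes, we reduce to (a) or (b), so the points left over are exactly $W_1$, whose cones are not supported on hyperplanes. The set $\mathcal{R}$ is open because these $\eps$-regularity conclusions hold on neighbourhoods. Two-sidedness of the immersion on $\mathcal{R}$ follows from remark (c) after Definition \ref{defn:orientation}, applied to $\Omega=\mathcal{R}$: the multiplicity $Q$ components are immersed pieces, and the alternating condition supplies a continuous normal across them.

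\textbf{Smooth convergence and the main obstacle.} For smooth convergence on $\mathcal{R}$, fix $x\in\mathcal{R}$.
\begin{itemize}
\item In case (a), $V_{j'}$ is eventually close to the same plane, so Theorem \ref{thm:eps-reg} applies to $V_{j'}$ with uniform $C^\ell$ bounds $C\hat E$. Arzel\`a--Ascoli and uniqueness of the limit give $C^\infty$ convergence of the graphs, after ordering them by height so that the sheets match.
\item In case (b), the classical cone estimates together with elliptic regularity of each sheet give the same conclusion.
\end{itemize}
The main obstacle is case (a) at points of density $Q$. Here we must ensure that the decomposition of $V_{j'}$ into $q$ graphs converges, sheet by sheet, to the decomposition of $V$. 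The sheets may cross, so "ordering" is not literally available. I would handle this by taking the $u_i^{j'}$ given by Theorem \ref{thm:eps-reg}, extracting $C^\ell$-convergent subsequences of each, and identifying the limits with the sheets of $V$ using analyticity and unique continuation of minimal graphs. This yields convergence as immersions.
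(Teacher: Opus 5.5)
\textbf{Verdict: genuine gap.} The hole is in your stratification step. You assert that spine dimension $n-2$ is excluded ``because in codimension one such cones are classical or planar, as in \cite{Wic14,MW24}''. You later assert that a tangent cone which is a union of hyperplanes ``reduces to (a) or (b)''. Neither claim holds in $\S_Q$, and a cone of spine dimension exactly $n-2$ is by definition neither classical nor planar. Such cones do occur. Take $\BC=|\{x^1=0\}|+|\{x^2=0\}|+|\{x^3=0\}|$ in $\R^{n+1}$:
\begin{itemize}
\item it is stationary;
\item its regular part is flat, hence stable, and has multiplicity one;
\item its only classical singularities are immersed crossings of density $2$.
\end{itemize}
So $\BC\res B^{n+1}_2(0)\in\S_2$ with all classical singularities immersed, yet its spine has dimension $n-2$. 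The argument in \cite{Wic14} excluding such cones uses the absence of classical singularities, which is not available here. At a point with such a tangent cone (classical singularities of density $Q$, multiplicities $\leq Q$), the point lies in none of $\mathcal{R}$, $W_1$, $W_2$, so the decomposition in (i) fails. The same unproved reduction underlies your claim that the leftover points are exactly $W_1$, where no tangent cone may be supported on a union of hyperplanes.

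\textbf{Missing idea: use the alternating condition at the level of tangent cones.} You note that tangent cones lie in ``$\S_Q^\pm$-type classes'' but never use this. The paper's argument runs as follows.
\begin{itemize}
\item At a spine-$(n-2)$ tangent cone with no classical singularities of density $>Q$, the link of the cross-section is a geodesic network on $S^2$ whose vertices are all immersed crossings. Hence $\BC$ is a sum of hyperplanes.
\item Convergence to $\BC$ is smooth away from $S(\BC)$, by Theorem \ref{thm:eps-reg} and \cite[Theorem C]{MW24}. So $\BC$ inherits the alternating orientation.
\item Lemma \ref{lemma:spines} then gives a contradiction. In the link graph every vertex has degree $\geq 4$ (immersed crossings). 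There are no odd cycles, since each vertex is a source or a sink and so the graph is bipartite. There are no $2$-cycles, since great-circle arcs meet only antipodally.
\item Hence every face has at least $4$ edges. Euler's formula $v-e+f=2$, with $4v\leq 2e$ and $4f\leq 2e$, gives $2\leq 0$.
\item Lower spine dimensions follow by taking a tangent cone at a point of the $(n-2)$-skeleton of the arrangement.
\end{itemize}
This lemma supplies both the exclusion of $\widetilde{S}_{n-2}$ and the ``not supported on a union of hyperplanes'' property defining $W_1$. Without it, or a substitute that exploits the alternating structure, your case analysis is incomplete.

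The rest of your outline matches the paper: Allard compactness, the plane and classical-cone cases via Theorem \ref{thm:eps-reg} and \cite[Theorem C]{MW24}, and two-sidedness via remark (c).
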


We stress that since $\S_Q^\pm$ does not restrict the nature of classical singularities of density $>Q$, one does not have control over the limit $V$ in the region $\{\Theta_V\geq Q+1/2\}$. Notice that $W_2\subset\{\Theta_V\geq Q+1/2\}$, and so $V\res \{\Theta_V<Q+1/2\}$ is smoothly immersed away from a relatively closed subset of $\{\Theta_V<Q+1/2\}$ with Hausdorff dimension $\leq n-3$. Further conclusions do not appear to be possible without further restricting the types of classical singularities and branch points with density $>Q$ or strengthening $(\S2)$ to assuming stability as an immersion (cf.~\cite{MX26}).

Note that since area minimisers mod $p$ form a compact class (see \cite[Proposition 5.2]{DLHMS20}), Theorem \ref{thm:main-reg-2} contains as a special case Theorem \ref{thm:main-reg} and Theorem \ref{thm:main-compactness}. Indeed, for $T$ an area minimiser mod $p$ the corresponding set $W_2$ in Theorem \ref{thm:main-reg-2} is always empty, and since $\dim_\H(W_1)\leq n-3$ we see that $T$ is always smoothly immersed outside a relatively closed set of dimension $\leq n-3$. Hence, we are left with proving Theorem \ref{thm:eps-reg} and Theorem \ref{thm:main-reg-2}.

\section{Proof of Theorem \ref{thm:eps-reg} ($\eps$-Regularity)}\label{sec:proof}

The starting point for our proof of Theorem \ref{thm:eps-reg} is the $\eps$-regularity theorem for $\S_Q$ from \cite[Theorem A]{MW24}. This gives that, under the set-up of Theorem \ref{thm:eps-reg}, when $V$ is close to $\{0\}\times\R^n$ with multiplicity $Q$, one can represent $V\res (\R\times B_{1/2}^n(0))$ as the graph of a generalised-$C^{1,\alpha}$ \emph{multi-valued} function $u:B^n_{1/2}(0)\to \A_Q(\R)$. When all density $Q$ classical singularities are immersed, this function is $C^{1,\alpha}$ in the classical sense of multi-valued functions.

Thus, our proof is concerned with showing that when $V\in\S_Q^\pm\subset\S_Q$ this $C^{1,\alpha}$ multi-valued function globally decomposes as $Q$ single-valued minimal graphs.

\subsection{$C^{1,\alpha}$ Selections} Our first lemma provides the (global) $C^{1,\alpha}$-selection of the multi-valued function $u$. This is the only place we utilise the alternating condition in the proof of Theorem \ref{thm:eps-reg}.

\begin{lemma}\label{lemma:decomposition}
	Suppose $V\in \S_Q^\pm$ is such that there is a generalised-$C^{1,\alpha}$ function $u:B^n_{1/2}(0)\to \A_Q(\R)$ with $V\res (\R\times B^n_{1/2}(0)) = \mathbf{v}(u)$. Then, there exist $C^{1,\alpha}$ functions $u_1,\dotsc,u_Q: B^n_{1/2}(0)\to \R$ for which $$u = \sum^Q_{i=1}\llbracket u_i\rrbracket.$$ Furthermore, each $u_i$ solves the minimal surface equation away from the (relatively closed) set
	$$\mathcal{K} := \{x\in B_{1/2}^n(0): u_i(x) = u_j(x) \text{ and }Du_i(x) = Du_j(x) \text{ for all }i,j\}.$$
	If $\mathcal{K} = B^n_{1/2}(0)$, then $u_i\equiv u_j$ for all $i,j$ and (all) $u_i$ solve the minimal surface equation on $B^n_{1/2}(0)$.
\end{lemma}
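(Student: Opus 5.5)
The plan is to build the selection by ordering the sheets and then using the alternating orientation to decide how sheets are re-paired across the singular set. Since $u$ takes values in $\A_Q(\R)$ (codimension one), there is always the canonical ordered selection $u^{(1)}\le u^{(2)}\le\dots\le u^{(Q)}$. Each $u^{(k)}$ is Lipschitz, and is $C^{1,\alpha}$ wherever the values are either distinct or coincide to first order. The ordered selection fails to be $C^1$ exactly where two sheets cross transversally. These crossings are the density $Q$ classical singularities, which are immersed by the assumption $V\in\S_Q^\pm$.

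\textbf{Step 1 (structure of the graph).} Since $u$ is $C^{1,\alpha}$ and $V=\mathbf{v}(u)$, the points of $\spt\|V\|$ fall into three types.
\begin{itemize}
\item Points where the graph is smoothly embedded locally, possibly as several ordered sheets touching tangentially, with each connected component of the regular part carrying constant multiplicity.
\item Immersed classical singularities, lying over a $C^{1,\alpha}$ $(n-1)$-submanifold of $B^n_{1/2}(0)$ along which sheets cross transversally.
\item Points of the $Q$-critical set $\mathcal K$, and more generally points of higher touching.
\end{itemize}
The aim is to redefine the selection so that at each transverse crossing the label follows the straight continuation of the sheet rather than swapping.

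\textbf{Step 2 (using the alternating orientation).} Orient each single-valued graph sheet by its upward normal. On regions of multiplicity $<Q$, the alternating orientation assigns to each component of $\reg(V)$ a sign $\pm$ relative to the upward normal. Across an immersed classical singularity, the alternating condition (remark (b)) says the orientation flips along each embedded sheet. Near a crossing point, relabel the two ordered pieces meeting along the crossing locus so that each straight continuation is a single $C^{1,\alpha}$ function.

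The key claim is that the sign $\sigma$ of the orientation relative to the upward normal gives a global invariant: sheets of ``$+$ type'' and ``$-$ type'' in the ordering alternate. Concretely, I would define $u_i$ by requiring that the $i$-th sheet counted by ordering with sign weights $\sum_{j\le i}\sigma_j$ is preserved. This is the bipartite structure mentioned in remark (c). With this definition, on the complement of $\mathcal K$ the selection is locally consistent, and it is $C^{1,\alpha}$ with the estimates inherited from $u$.

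\textbf{Step 3 (globalising, the main obstacle).} The hard part is that the components of multiplicity $Q$ carry no orientation, and $B^n_{1/2}(0)\setminus\mathcal K$ need not be simply connected, so the local selections might have monodromy. I would argue as follows.
\begin{itemize}
\item Away from $\mathcal K$, the local selection is determined by the combinatorics of the ordering together with the alternating signs. This pattern is locally constant, and continuous across crossings by Step 2.
\item On a multiplicity $Q$ sheet region, all $u_i$ coincide, so any labelling is consistent. There is therefore no monodromy issue there.
\item The functions can then be patched along the pairwise-touching sets, where $u_i=u_j$ and $Du_i=Du_j$, in a $C^{1,\alpha}$ fashion, since any choice agrees to first order there.
\end{itemize}
Uniform $C^{1,\alpha}$ bounds then follow from those of $u$, because $|Du_i(x)-Du_i(y)|\le \mathcal G(Du(x),Du(y))$ whenever the labels are consistent. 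I expect to need an approximation or connectivity argument to ensure the labelling defined on each component extends continuously to the closure.

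\textbf{Step 4 (the minimal surface equation).} Away from $\mathcal K$, near any point the graph is locally a union of fewer than $Q$ distinct sheets, or it has a transverse crossing. By the constancy theorem, each embedded piece is minimal, so each $u_i$ solves the MSE classically on the open set where it is locally separated from the others. At crossings, the sheets are smooth embedded submanifolds, since immersed classical singularities consist of minimal embedded sheets. Points where only some of the sheets touch are handled by induction on the number of coinciding sheets, applying the same argument to the sub-collection.

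Finally, if $\mathcal K=B^n_{1/2}(0)$, then all $u_i$ coincide and $V=Q\,\mathbf v(u_1)$. Stationarity of $V$ then gives that $u_1$ is a weak, hence smooth, solution of the MSE.
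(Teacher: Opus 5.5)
\textbf{There is a genuine gap in Steps 2--3, which is where the global labelling has to come from.} Your ``key claim'' that $+$ and $-$ sheets alternate in the vertical ordering is false, so the rule built on $\sum_{j\le i}\sigma_j$ does not give a valid selection.

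Consider a density $Q$ classical singularity whose crossing locus projects to a hypersurface $\Gamma$ separating $\Omega_\pm$. Every half-sheet over $\Omega_+$ is a graph over the same side of $\Gamma$. So if $\eta_i$ is its outward conormal and $\nu_i^{\mathrm{up}}$ its upward normal, the $2$-vectors $\nu_i^{\mathrm{up}}\wedge\eta_i$ all have the same orientation, and over $\Omega_-$ they all have the opposite one. Requiring the Stokes orientations on $\Gamma$ to coincide therefore forces \emph{every} sheet over $\Omega_+$ to have the same sign $s$ relative to $e_1$, and every sheet over $\Omega_-$ to have sign $-s$. Hence the sign is a function $s(x)$ of the base point $x\in\mathcal R$ (on components of $\mathcal R$ whose closure misses $\mathcal C$ one may simply set $s=+1$). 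It is constant on components of $\mathcal R$ and flips across $\mathcal C$.

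Moreover, by $(\S3)_Q$ a crossing never involves just two sheets, contrary to your Step 1. All $Q$ ordered sheets meet along $\Gamma$. The ordering gives $D_{\nu_\Gamma}\widetilde u_1^+\le\cdots\le D_{\nu_\Gamma}\widetilde u_Q^+$ on $\Omega_+$ and the reverse inequalities on $\Omega_-$. Since the tangent cone is a sum of $Q$ hyperplanes, the $j$-th sheet from the bottom on one side continues as the $j$-th sheet from the top on the other.

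The missing idea is to combine these two facts into a \emph{pointwise} definition:
\begin{itemize}
\item $u_j:=\widetilde u_j$ where $s=+1$;
\item $u_j:=\widetilde u_{Q-j+1}$ where $s=-1$;
\item $u_j:=\widetilde u_1$ on $\mathcal C\cup\mathcal K$.
\end{itemize}
This is globally defined by construction, so there is no monodromy to control. $C^1$ regularity only needs checking along $\mathcal C$, where it is exactly the order reversal just described. Your Step 3 instead patches local continuations and asserts that the pattern is ``locally constant'', which is precisely what needs proof. Since the invariant you propose is not the right one, nothing in your argument actually excludes a $\mathrm{Re}(z^{3/2})$-type configuration. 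Such a configuration has only immersed crossings, but continuation swaps its sheets around the branch set, and ruling it out is exactly the job of the alternating hypothesis.

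\textbf{Two smaller points.} First, the inequality $|Du_i(x)-Du_i(y)|\le\mathcal G(Du(x),Du(y))$ goes the wrong way. $\mathcal G$ is a minimum over matchings, so it is bounded \emph{above} by the labelled difference. The $C^{1,\alpha}$ bounds for the selected $u_i$ are therefore not a formal consequence of those for $u$. You need the $C^1$ property together with estimates for the MSE away from $\mathcal K$, as in the proof of Lemma \ref{lemma:regularity}.

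Second, in Step 4 minimality across $\mathcal C$ should be justified directly. Each $u_j$ is $C^1$ and solves the MSE on both sides of the $C^1$ hypersurface $\Gamma$. Apply the divergence theorem on $\Omega_\pm$ with an $\varepsilon$-neighbourhood of $\Gamma$ removed; as $\varepsilon\to0$ the boundary terms $\pm\int_\Gamma\phi\,F(Du_j)\cdot\nu_\Gamma$ cancel, so $u_j$ is a $C^1$ weak solution and hence smooth. Your treatment of the case $\mathcal K=B^n_{1/2}(0)$ is fine.
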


\begin{proof}
	By definition of $\S^\pm_Q$, there are no classical singularities of density $<Q$ and all classical singularities of density $Q$ are immersed. It therefore follows that $u$ is $C^{1,\alpha}$. Since $u$ is $\A_Q(\R)$-valued, taking the natural ordering induced by $\R$ on the values of $u$ produces a Lipschitz selection, and so we may write $u = \sum^Q_{i=1}\llbracket \widetilde{u}_i\rrbracket$, where $\widetilde{u}_1\leq \cdots \leq \widetilde{u}_Q$ on $B_{1/2}^n(0)$.
	
	If $u \equiv Q\llbracket \widetilde{u}\rrbracket$ for some (necessarily $C^{1,\alpha}$) $\widetilde{u}:B^n_{1/2}(0)\to \R$, then as $V$ is stationary we must have that $\widetilde{u}$ is minimal on $B^n_{1/2}(0)$ and we are done. So, we may assume that $\mathcal{K}\neq B^n_{1/2}(0)$; in particular, $\widetilde{u}_i\not\equiv \widetilde{u}_j$ for some $i,j$.
	
	From \cite[Theorem A]{MW24}, we may write $B^n_{1/2}(0) = \mathcal{R}\cup \mathcal{C}\cup\mathcal{K}$ as a disjoint union, where:
	\begin{itemize}
		\item $\mathcal{R} = \{x\in B^n_{1/2}(0) : u(x)\neq Q\llbracket a\rrbracket\text{ for any }a\in \R\}$ (in particular, from the properties of $V\in \S_Q$, $u$ has a smooth selection locally about any point in $\mathcal{R}$);
		\item $\mathcal{C}$ the set of (density $Q$) classical singularities of $u$;
		\item $\mathcal{K}$ is the (density $Q$) critical set of $u$ (once the lemma is proven, $\mathcal{K}$ is as in the statement).
	\end{itemize}
	Let $\nu$ be (a choice of) the unit normal to $\reg(V)\cap\{\Theta_V<Q\}$ given by the alternating condition. In particular, if $\nu_i(x)$ denotes the corresponding unit normal to $\graph(\widetilde{u}_i)$ at $x\in \mathcal{R}$, then $\text{sign}(\nu_i(x)\cdot e_{1})$ is independent of $i$ (note that $e_1$ is the vertical direction); we write $s(x):\mathcal{R}\to \{\pm 1\}$ for this sign. The alternating condition gives two properties:
	\begin{enumerate}
		\item [(i)] $s$ is constant on connected components of $\mathcal{R}$;
		\item [(ii)] If $x\in\mathcal{C}$ and $R_1,R_2$ are the two distinct connected components of $\mathcal{R}$ with $x\in \overline{R}_1\cap \overline{R}_2$, then $s(R_1)s(R_2) = -1$, where $s(R_i)$ is the value of $s$ on $R_i$.
	\end{enumerate}
	Now, for $j=1,\dotsc,Q$, define functions $w_j:B^n_{1/2}(0)\to \R$ by:
	\begin{equation}\label{eq:defn of C^1,al selection}
	    w_j(x) := \begin{cases}
		\widetilde{u}_j(x) & \text{if }x\in\mathcal{R}\text{ and }s(x) = +1;\\
		\widetilde{u}_{Q-j+1}(x) & \text{if }x\in\mathcal{R}\text{ and }s(x) = -1;\\
		\widetilde{u}_1(x) & \text{if }x\in \mathcal{C}\cup\mathcal{K}.
	\end{cases}
	\end{equation}
	Notice that as $\widetilde{u}_i \equiv \widetilde{u}_j$ on $\mathcal{C}\cup\mathcal{K}$ for all $i,j$, the choice of $\widetilde{u}_i$ on $\mathcal{C}\cup\mathcal{K}$ is irrelevant. 
	
	Clearly $w_j:B^n_{1/2}(0)\to \R$ is well-defined and Lipschitz. We claim that in fact it is $C^{1,\alpha}$. Indeed, for this one only needs to check that $w_j$ is $C^1$, as then the $C^{1,\alpha}$ regularity of $u$ implies that $w_j$ is $C^{1,\alpha}$ (cf.~the proof of Lemma \ref{lemma:regularity}). By definition of $u$ being $C^{1,\alpha}$ we know $w_j$ is $C^1$ at every point of $\mathcal{R}\cup\mathcal{K}$, and so we just need to check $w_j$ is $C^1$ at $x\in \mathcal{C}$. 
	
	Fix $x\in \mathcal{C}$. There is then a radius $\rho = \rho(x)>0$ such that we can partition $B_\rho(x) = \Omega_+\cup \Omega_-\cup \Gamma$, where $\Gamma\subset B_\rho(x)$ is an $(n-1)$-dimensional submanifold of $B_\rho(x)$ with $\del\Gamma\subset\del B_\rho(x)$. By relabelling we can without loss of generality assume that $s=\pm 1$ on $\Omega_\pm$, respectively. Write $\widetilde{u}_i^\pm := \widetilde{u}_i|_{\Omega_\pm}\in C^{1,\al}(\overline{\Omega_\pm})$. Then from the Hopf boundary point lemma and the definition of a classical singularity (as the only intersections occur along the image of $\Gamma$) it follows that, if $\nu_\Gamma$ is the unit normal to $\Gamma$ directed into $\Omega_+$, then
	$$D_{\nu_\Gamma} \widetilde{u}_1^+ \leq D_{\nu_\Gamma} \widetilde{u}_2^+ \leq\cdots \leq D_{\nu_\Gamma} \widetilde{u}_Q^+ \quad \text{on }\Omega_+ \qquad \text{and} \qquad D_{\nu_\Gamma} \widetilde{u}_1^- \geq D_{\nu_\Gamma} \widetilde{u}_2^- \geq \cdots \geq D_{\nu_\Gamma} \widetilde{u}_Q^- \quad \text{on }\Omega_-.$$
	Since the tangent cones to $V$ at points along $\Gamma$ are sums of $Q$ hyperplanes, this therefore implies that $D_\nu \tilde{u}_{Q-j+1}^+ = D_\nu \tilde{u}_j^-$ for each $j$. Furthermore, all the derivatives of the $u^\pm_i$ parallel to $\Gamma$ agree by definition of a classical singularity. Hence, we have $D\tilde{u}_{Q-j+1}^+ = D\tilde{u}_j^-$ along $\Gamma$ for all $j=1,\dotsc,Q$, which implies that $w_j$ is $C^1$ at $x$. Thus, $w_j$ is $C^1$ on all of $B^n_{1/2}(0)$.
	
	We now know that each $w_j$ is $C^{1,\alpha}$ on $B^n_{1/2}(0)$. But clearly $u = \sum^Q_{j=1}\llbracket w_j\rrbracket$, and so this proves the first claim of the lemma.
	
	For the second claim, we want to show that the $w_j$ solve the MSE away from $\mathcal{K}$, i.e.~on $\mathcal{R}\cup \mathcal{C}$.
	The fact that $w_j$ solves the MSE on $\mathcal{R}$ is clear, as they are regular points of $V$. Thus to conclude it suffices to show that $w_j$ solves the MSE locally about any point $y\in \mathcal{C}$. Since $\mathcal{C}$ is a $(n-1)$-dimensional $C^1$ submanifold and $w_j$ is $C^1$ this is a standard extendability fact, although we include the details for completeness.
	
	For simplicity, let us write $w = w_j$. We may translate and rescale to assume without loss of generality that $y = 0$ and $w$ is a $C^1$ function on $B^n_1(0)$ which is smooth and solves the MSE pointwise on $B^n_1(0)\setminus \Gamma$, where $\Gamma\subset B^n_1(0)$ is an $(n-1)$-dimensional $C^1$ submanifold which disconnects $B^n_1(0)$ into two connected components $\Omega_\pm$. Write $F(p) := p/(1+|p|^2)^{1/2}$, so that the MSE can be written as $\div(F(Dw)) = 0$, which we know to be satisfied pointwise on $B^n_1(0)\setminus \Gamma$.
	
	Fix $\eps>0$ small and let $\phi\in C^1_c(B_1^n(0))$. Write $N_\eps(\Gamma) = \{x\in B^n_1(0):\dist(x,\Gamma)<\eps\}$ for the $\eps$-neighbourhood of $\Gamma$ and set $\Omega_\eps:= B^n_1(0)\setminus N_\eps(\Gamma)$. By the divergence theorem we have:
	\begin{align*}
	\int_{\Omega_\eps}F(Dw)\cdot D\phi & = -\int_{\Omega_\eps}\phi\,\div(F(Dw)) + \int_{\del\Omega_\eps}\phi F(Dw)\cdot\nu_\eps\\
	& = \int_{\Omega_+\cap \del\Omega_\eps}\phi F(Dw)\cdot\nu_\eps^+ + \int_{\Omega_-\cap \del\Omega_\eps}\phi F(Dw)\cdot\nu_\eps^-
	\end{align*}
	where $\nu_\eps$ is the outward pointing unit normal to $\del\Omega_\eps$, with $\nu_\eps^\pm$ the outward pointing unit normals to $\Omega_\pm\cap \del\Omega_\eps$, respectively. Since $F(Dw)$ is continuous and $\Gamma$ is $C^1$, if we take $\eps\to 0$ we see that
	$$\int_{\Omega_\pm \cap\del\Omega_\eps}\phi F(Dw)\cdot\nu_\eps^{\pm} \to \pm \int_\Gamma \phi F(Dw)\cdot\nu_\Gamma$$
	where $\nu_\Gamma$ is the unit normal to $\Gamma$ directed into $\Omega_+$. Hence, these two terms cancel in the limit as $\eps\to 0$, and thus we see that
	$$\int F(Dw)\cdot D\phi = 0 \qquad \text{for all }\phi\in C^1_c(B^n_1(0)).$$
	Hence, $w$ is a $C^1$ weak solution to the MSE in $B_1^n(0)$, and thus standard elliptic regularity gives that $w$ is smooth and thus pointwise solves the MSE in $B^n_1(0)$. Hence each $w_i$ is smooth and pointwise solves the MSE away from $\mathcal{K}$, completing the proof.
\end{proof}

What remains to be shown for Theorem \ref{thm:eps-reg} is that each $u_i$ in Lemma \ref{lemma:decomposition} is minimal \emph{across} $\mathcal{K}$. We stress that this is not a priori clear, as the size of $\mathcal{K}$ is unknown at present (cf.~Remark \ref{remark:decomp}).

Next, we prove that any coarse blow-up generated by a sequence $(V_j)_j\subset \S_Q^{\pm}$ converging to a multiplicity $Q$ hyperplane can be written as a sum of $Q$ (smooth, single-valued) harmonic functions. We write $\mathfrak{B}_Q^\pm$ for the set of all coarse blow-ups generated by blowing-up sequences in $\S_Q^{\pm}$ converging to the hyperplane $\{0\}\times\R^n$ with multiplicity $Q$.

\begin{lemma}\label{lemma:blow-ups}
	Let $v\in \mathfrak{B}^\pm_Q$. Then there exist (smooth) harmonic functions $v_1,\dotsc,v_Q: B^n_1(0)\to \R$ with $v = \sum^Q_{i=1}\llbracket v_i\rrbracket$.
\end{lemma}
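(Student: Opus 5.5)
The plan is to pass the $C^{1,\alpha}$ selections of Lemma \ref{lemma:decomposition} to the blow-up limit, show that each limiting selection is harmonic off a small bad set, and then remove that set using the fact that it lies in a joint critical zero set.

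\textbf{Step 1: pass the selections to the limit.} Let $(V_j)_j\subset\S_Q^\pm$ be a blowing-up sequence generating $v$, with height excesses $\hat E_j\to 0$. By \cite[Theorem A]{MW24}, for $j$ large we can write $V_j\res(\R\times B^n_{\sigma}(0))=\mathbf{v}(u^j)$ for each fixed $\sigma<1$ (after rescaling). Here $u^j$ is generalised-$C^{1,\alpha}$, and it is honestly $C^{1,\alpha}$ since all density $Q$ classical singularities are immersed. Moreover $\|u^j\|_{C^{1,\alpha}}\leq C\hat E_j$. Lemma \ref{lemma:decomposition} gives $u^j=\sum_i\llbracket w^j_i\rrbracket$ with $C^{1,\alpha}$ single-valued $w^j_i$. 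By construction each $|Dw^j_i|$ and each H\"older quotient of $Dw^j_i$ is controlled by the corresponding quantity for $u^j$, so $\|w^j_i\|_{C^{1,\alpha}}\leq C\hat E_j$ uniformly. By Arzel\`a--Ascoli and a diagonal argument over $\sigma\uparrow 1$, the functions $\hat E_j^{-1}w^j_i$ converge in $C^1_{loc}$ to $C^{1,\alpha}$ functions $v_i$ on $B^n_1(0)$, and $v=\sum_i\llbracket v_i\rrbracket$.

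\textbf{Step 2: harmonicity off $\mathcal{K}_v$.} Set $\mathcal{K}_v:=\{x: v_i(x)=v_k(x),\ Dv_i(x)=Dv_k(x)\ \forall i,k\}$. We split points off $\mathcal{K}_v$ into two cases.
\begin{itemize}
\item If $x$ has $v_i(x)\neq v_k(x)$ for some $i,k$, then for large $j$ the functions $w^j_i,w^j_k$ differ on a fixed ball around $x$. That ball is therefore disjoint from $\mathcal{K}_j$, so every $w^j_l$ solves the MSE there. Uniform $C^{1,\alpha}$ bounds plus Schauder theory give smooth estimates, and the linearisation of the MSE shows every $v_l$ is harmonic near $x$.
\item If instead all $v_i(x)$ agree but $Dv_i(x)\neq Dv_k(x)$, then near $x$ the set $\{v_i=v_k\}$ is a $C^1$ hypersurface $\Gamma$. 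Off $\Gamma$ we are in the first case, so each $v_l$ is harmonic on $B_\rho(x)\setminus\Gamma$ and $C^1$ across $\Gamma$. The divergence-theorem argument from the proof of Lemma \ref{lemma:decomposition}, with $F(p)=p$, shows $v_l$ is weakly, hence classically, harmonic near $x$.
\end{itemize}
So each $v_i$ is harmonic on $B^n_1(0)\setminus\mathcal{K}_v$. The average $v_a:=Q^{-1}\sum_iv_i$ is harmonic everywhere, by stationarity (the standard property of coarse blow-ups in \cite{MW24}).

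\textbf{Step 3: removing $\mathcal{K}_v$ (main obstacle).} Since $v_i=v_a+Q^{-1}\sum_k(v_i-v_k)$, it suffices to show each difference $g:=v_i-v_k$ is harmonic on $B^n_1(0)$. We know $g\in C^{1}$, $g$ is harmonic off $\mathcal{K}_v$, and $\mathcal{K}_v\subset\{g=0,\,Dg=0\}$. On $\{g=0,\,Dg\neq0\}$ the zero set is a $C^1$ hypersurface and the same removability argument applies, so $g$ is harmonic on $B^n_1(0)\setminus\{g=0\}$. Thus $g$ is a $C^1$ function harmonic off its own zero set, and the Radó-type theorem (Radó for $C^0$ in the holomorphic case; Král's version for $C^1$ harmonic functions) gives that $g$ is harmonic everywhere. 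This is exactly the statement flagged in the overview, a $C^1$ function harmonic away from its critical zero set is harmonic.

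This is the step I expect to be hardest if done by hand rather than by citation. The naive cutoff argument, testing with $\phi(1-\eta_\delta)$ for $\eta_\delta$ cutting off near $\mathcal{K}_v$, needs $|N_\delta(\mathcal{K}_v)|=O(\delta)$, which we do not know. I would therefore either cite Král's theorem, or argue via level-set fluxes. In the latter approach I would use three facts: $Dg=0$ a.e.\ on $\{g=0\}$; $\int_{\{|g|<t\}}|Dg||D\phi|\to0$ as $t\to0$; and, for a.e.\ $t$, the coarea formula with the divergence theorem on $\{g>t\}$ and $\{g<-t\}$. These should show that the fluxes $\int_{\{g=\pm t\}}\phi|Dg|$ cancel in the limit, using harmonicity on $\{0<|g|<t\}$ to compare them.
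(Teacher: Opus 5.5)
Your proof is correct. Steps 1--2 follow the paper, with one justification to repair, and Step 3 finishes by a different and somewhat shorter route.

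\textbf{Step 1.} The uniform $C^{1,\alpha}$ bound on the selections is not ``by construction''. Individual H\"older quotients of $Dw^j_i$ are \emph{not} bounded by those of $Du^j$. The reason is that $\mathcal{G}$ uses the optimal matching, which need not be the one your selection follows. The paper's footnote flags exactly this and appeals to the MSE off $\mathcal{K}_j$ together with a case split as in Lemma \ref{lemma:regularity}. A quick alternative fix is a connectedness argument. For $x,y$ in a ball, set $r:=[Du]_{\alpha}|x-y|^{\alpha}$. Every $z\in[x,y]$ has $\mathcal{G}(Du(z),Du(x))\le r$, so the continuous curve $z\mapsto Dw_i(z)$ stays in $\bigcup_k\overline{B}_r(Dw_k(x))$. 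It therefore stays in the connected component containing $Dw_i(x)$, which has diameter at most $2Qr$. This gives $[Dw_i]_\alpha\le 2Q[Du]_\alpha$.

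\textbf{Step 2.} Your second case is unnecessary. If $Dv_i(x)\neq Dv_k(x)$, then $|Dw^j_i-Dw^j_k|\gtrsim\hat E_j$ near $x$ for large $j$, so your first argument applies verbatim. The paper handles both cases at once via $\mathcal{K}_j\to\mathcal{K}_*\subseteq\mathcal{K}$ in Hausdorff distance.

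\textbf{Step 3.} Here the routes genuinely differ.
The paper:
\begin{enumerate}
\item applies Kr\'al's theorem to a single nonzero difference;
\item uses unique continuation for that (now harmonic) difference to get $\dim_{\mathcal{H}}\mathcal{K}\le n-2$;
\item removes the $\mathcal{H}^{n-1}$-null closed set $\mathcal{K}$ from $\Delta v^{(i)}=0$, treating $\mathcal{K}=B^n_1(0)$ separately via $(\mathfrak{B}3)$.
\end{enumerate}
You instead apply Kr\'al to every difference and use $v_i=v_a+Q^{-1}\sum_k(v_i-v_k)$, with $v_a$ harmonic by $(\mathfrak{B}3)$. This avoids unique continuation, the dimension estimate, and the case split. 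What the paper's route buys is the bound $\dim_{\mathcal{H}}\mathcal{K}\le n-2$ for blow-ups as a by-product. It also mirrors the ``make $\mathcal{K}$ small, then remove it'' argument used later for Theorem \ref{thm:eps-reg}.

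\textbf{Your ``by hand'' fallback.} This does not close as sketched. Harmonicity on $\{0<\pm g<t\}$ only shows that each one-sided flux $\int_{\{g=\pm t\}}\phi|Dg|$ converges as $t\downarrow 0$. That the two limits coincide is precisely the statement that $\Delta g$ does not charge $\{g=0,Dg=0\}$, i.e.\ Kr\'al's theorem itself. So the citation is doing real work. If you want to avoid it, the remark following the lemma in the paper points to a frequency-function route to the dimension bound instead.
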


\begin{proof}
Let $(V_j)_j\subset \mathfrak{B}_Q^\pm$ be the sequence generating $v\in \mathfrak{B}_Q^\pm$. In particular, $V_j$ converges to the hyperplane $\{0\}\times\R^n$ with multiplicity $Q$ in $\R\times B^n_1(0)$. Fixing a sequence $\sigma_j\uparrow 1$ and passing to an appropriate subsequence of $(V_j)_j$, we may apply \cite[Theorem A]{MW24} to represent $V_j$ on $\R\times B_{\sigma_j}^n(0)$ by the graph of a generalised-$C^{1,\alpha}$ function $u_j: B_{\sigma_j}^n(0)\to \A_Q(\R)$.
	
By Lemma \ref{lemma:decomposition}, we can write $u_j = \sum^Q_{i=1}\llbracket u_j^{(i)}\rrbracket$, where $u_j^{(i)}:B^n_1(0)\to \R$ is $C^{1,\alpha}$. Moreover, for any $\sigma\in (0,1)$, if $j$ is sufficiently large we know from \cite[Theorem A]{MW24} that\footnote{This estimate follows from the estimate in \cite[Theorem A]{MW24} coupled with the fact that the $u^{(i)}_j$ solve the MSE away from the $Q$-touching set of $u_j$ by Lemma \ref{lemma:decomposition}. Indeed, one can argue similarly to the proof of Lemma \ref{lemma:regularity} to see this, although we only give the details once there (as it is the harder case). We stress that this estimate on the constituents $u^{(i)}_j$ does not trivially follow from the corresponding estimate for the $Q$-valued function $u_j$ as the $C^{0,\alpha}$ semi-norm for multi-valued functions involves the Almgren metric $\mathcal{G}$.}
	$\|u_j^{(i)}\|_{C^{1,\alpha}(B_{\sigma}^n(0))} \leq C(n,Q,\sigma)\hat{E}_{V_j}$. Thus, by Arzelà--Ascoli, we may pass to a subsequence for which $\hat{E}_{V_j}^{-1}u_j^{(i)}\to v^{(i)}$ in $C^{1}_{\text{loc}}(B^n_1(0))$ for each $i=1,\dotsc,Q$, for some $v^{(i)}: B^n_1(0)\to \R$ which are $C^{1,\alpha}_{\text{loc}}(B^n_1(0))$. We know that $v$ is the $L^2$ limit of $v_j:= \hat{E}_{V_j}^{-1}u_j$, and thus $v = \sum^Q_{i=1}\llbracket v^{(i)}\rrbracket$.
	
If we write $\mathcal{K}_j:=\big\{x\in B_{\sigma_j}^n(0):u_j^{(k)}(x) = u_j^{(\ell)}(x)\text{ and }Du_j^{(k)}(x) = Du_j^{(\ell)}(x)\text{ for all }k,\ell\big\}$, then from the $C^1_{\text{loc}}$ convergence of $u_j^{(i)}$ to $v^{(i)}$ we see that
$$\mathcal{K}_j\to \mathcal{K}_* \qquad \text{locally in Hausdorff distance in }B^n_1(0)$$
for some relatively closed subset $\mathcal{K}_*\subset B^n_1(0)$ obeying $\mathcal{K}_*\subseteq\mathcal{K} := \big\{x\in B^n_1(0):v^{(k)}(x) = v^{(\ell)}(x)\text{ and }Dv^{(k)}(x) = Dv^{(\ell)}(x) \text{ for all }k,\ell\big\}$. Furthermore, as the $u_j^{(i)}$ solve the MSE away from $\mathcal{K}_j$ by Lemma \ref{lemma:decomposition} (thus giving improved regularity and estimates), we see that $v^{(i)}$ must be harmonic on $B^n_1(0)\setminus\mathcal{K}$. Of course, if $\mathcal{K} = B^n_1(0)$ then $v^{(i)}\equiv v^{(\ell)}$ for all $i,\ell$, and thus $v = Q\llbracket v^{(1)}\rrbracket$. But since the average of a coarse blow-up of stationary integral varifolds is always harmonic (see \cite[$(\mathfrak{B}3)$]{MW24}), this would imply that $v^{(1)}$ is harmonic, completing the proof. So, we may assume that $\mathcal{K}\neq B^n_1(0)$.
	
We claim that $\dim_\H(\mathcal{K})\leq n-2$. Once we have this, since each $v^{(i)}$ is $C^{1,\alpha}$ and satisfies $\Delta v^{(i)}=0$ on $B^n_1(0)\setminus \mathcal{K}$, it follows that $\mathcal{K}$ is removable from this equation and thus $v^{(i)}$ is harmonic on all of $B^n_1(0)$, completing the proof.
	
To see that $\dim_\H(\mathcal{K})\leq n-2$, since $\mathcal{K}\neq B^n_1(0)$ there exist $k,\ell$ for which $v^{(k)}\not\equiv v^{(\ell)}$; set $w:= v^{(k)} - v^{(\ell)}$. Then $w$ is a non-zero $C^{1,\alpha}$ function on $B^n_1(0)$ which solves $\Delta w = 0$ on $B^n_1(0)\setminus \mathcal{K}$, where $\mathcal{K} \subseteq \{w=0, Dw=0\}$. But as the zero set is removable from the equation $\Delta w =0$ for a $C^{1,\alpha}$ function (see e.g.~\cite{Kral83}), it follows that we must have $\Delta w = 0$ on $B^n_1(0)$, giving that $w$ is harmonic (and thus smooth) on all of $B^n_1(0)$. Standard estimates from (quantitative) unique continuation now give that $\dim_\H(\{w=0,Dw=0\})\leq n-2$, and so as $\mathcal{K}\subseteq \{w=0,Dw=0\}$ necessarily we have $\dim_\H(\mathcal{K})\leq n-2$ as well. This completes the proof.
\end{proof}

\begin{remark}
At the end of the proof of Lemma \ref{lemma:blow-ups}, we used that if $w$ is a $C^{1,\alpha}$ function on $B^n_1(0)$ which solves $\Delta w = 0$ on $B^n_1(0)\setminus \{w=0,Dw=0\}$, then in fact $\Delta w = 0$ on $B^n_1(0)$. This was then used to deduce that $\dim_\H(\CK)\leq n-2$. In the present setting, one can argue for this dimension bound directly, avoiding the use of an extension result like \cite{Kral83} (which in fact only needs $w$ to be $C^1$). Indeed, one can show that $w$ has a monotone frequency function and thus use standard methods to directly prove $\dim_\H(\{w=0,Dw=0\})\leq n-2$ (at which point the proof of Lemma \ref{lemma:blow-ups} is complete). The fact that $w$ has a monotone frequency function can be seen by extending the two key variational identities (squash and squeeze) across the set $\{w=0,Dw=0\}$ by using a cut-off argument and that $w$ must be in $W^{2,2}_{\text{loc}}(B^n_1(0))$ (analogously to that seen in \cite[Section 2]{SW16}). This could be viewed as a much simpler instance of the result in \cite[Appendix A]{MW24}. 
\end{remark}

\subsection{$C^{1,1}$ Regularity via Planar Frequency}

Throughout this subsection we suppose $V\in \S_Q^{\pm}$ is such that there are $C^{1,\alpha}$ functions $u_1,\dotsc,u_Q:B^n_{1/2}(0)\to \R$ for which, writing $u = \sum^Q_{i=1}\llbracket u_i\rrbracket$,
$$V \res (\R\times B^n_{1/2}(0)) = \mathbf{v}(u) \equiv \sum^Q_{i=1}\mathbf{v}(u_i).$$
We also assume that $\hat{E}_V<\eps$ for some suitably small $\eps = \eps(n,Q)\in (0,1)$. By \cite[Theorem A]{MW24}, this ensures $\sup|Du| < C(n,Q)\eps$ is small.

We will show that each $u_i$ must in fact be $C^{1,1}$. For this, we utilise several properties of the planar frequency function established in \cite{KMW}. As usual, we write $\mathcal{K}$ for the $Q$-touching set of $u$, i.e.
$$\mathcal{K} := \{x\in B^n_{1/2}(0): u_i(x) = u_j(x) \text{ and }Du_i(x) = Du_j(x) \text{ for all }i, j\}.$$

\begin{lemma}\label{lemma:lower-bound}
	Any tangent map to $u$ at a point in $\mathcal{K}$ is collection of $Q$ homogeneous harmonic functions with homogeneity $\geq 2$.
\end{lemma}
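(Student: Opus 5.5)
We take a tangent map of $u$ at a point $z\in\mathcal{K}$, realise it as a coarse blow-up so that Lemma \ref{lemma:blow-ups} applies, and then use the planar frequency function of \cite{KMW} to get homogeneity and a frequency bound.

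\textbf{Setup.} Fix $z\in\mathcal{K}$ and translate so that $z=0$. By subtracting the affine function $u_1(0)+Du_1(0)\cdot x$ (equivalently, rotating $V$ slightly), we may assume $u_i(0)=0$ and $Du_i(0)=0$ for all $i$. The rotated varifold still lies in $\S_Q^\pm$ and is still close to multiplicity $Q$ on $\{0\}\times\R^n$. A tangent map is a limit of the rescalings
$$u_{0,\rho_k}(x):=\frac{u(\rho_k x)}{\bigl(\rho_k^{-n}\int_{B_{\rho_k}}|u|^2\bigr)^{1/2}}.$$
Equivalently, it is a coarse blow-up of the varifolds $(\eta_{0,\rho_k})_\#V$, normalised by their height excess relative to the plane $\{0\}\times\R^n$ (which we have arranged to be the tangent plane at $0$). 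These rescaled varifolds are in $\S_Q^\pm$, since the defining conditions are scale invariant. Their height excess tends to zero because $u$ is $C^{1,\alpha}$ with $Du(0)=0$, so the excess is $O(\rho_k^{\alpha})$. Hence, after passing to a subsequence, any tangent map $\varphi$ belongs to $\mathfrak{B}^\pm_Q$. Lemma \ref{lemma:blow-ups} then gives $\varphi=\sum_i\llbracket\varphi_i\rrbracket$ with each $\varphi_i$ harmonic. If the excess is eventually zero near $0$, then $u\equiv Q\llbracket u_1\rrbracket$ near $0$; there is nothing to prove there, or $z$ is handled separately.

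\textbf{Homogeneity.} We use the monotonicity of the planar frequency function $N_{V}(0,\rho)$ for the stationary graph, as established in \cite{KMW} for this class of $C^{1,\alpha}$ stationary multi-valued graphs (on the $Q$-touching set, where the average has been removed). Monotonicity gives a limit frequency $N(0)=\lim_{\rho\to0}N(0,\rho)$. The tangent map then has constant frequency $N(0)$ at every scale, which forces $\varphi$ to be homogeneous of degree $N(0)$. This is the step where we rely on the cited planar frequency results; the main point to check is that our normalisation matches theirs.

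\textbf{Degree at least $2$.} Each $\varphi_i$ is a homogeneous harmonic function of degree $N(0)$ that is smooth at $0$. A homogeneous function that is smooth at the origin and not identically zero must be a homogeneous polynomial, so $N(0)$ is a nonnegative integer. Since $u_i(0)=0$ and $Du_i(0)=0$ for every $i$, and $Du$ is $C^{0,\alpha}$, we have $|u(x)|\le C|x|^{1+\alpha}$. We then use that the frequency lower bound transfers: the decay $|u|\le C\rho^{1+\alpha}$ together with the doubling estimate from frequency monotonicity, $\int_{\partial B_\rho}|u|^2\gtrsim\rho^{2N(0)+n-1}$, gives $N(0)\ge 1+\alpha>1$. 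Combined with integrality, this gives $N(0)\ge2$.

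\textbf{Main obstacle.} The hardest step is justifying homogeneity from the planar frequency. In particular, we must check that the normalisation of the tangent map is compatible with the one in \cite{KMW}, and that the limit frequency is attained by $\varphi$ without loss of $L^2$ mass. We would handle this using the strong $L^2$ (in fact $C^1_{\mathrm{loc}}$) convergence from the proof of Lemma \ref{lemma:blow-ups}, which gives continuity of the frequency under blow-up.
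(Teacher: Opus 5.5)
Your proposal is correct and follows essentially the same route as the paper: realise the tangent map as a coarse blow-up in $\mathfrak{B}^\pm_Q$ so that Lemma \ref{lemma:blow-ups} makes its components harmonic, take homogeneity of degree $>1$ from the planar frequency results of \cite{KMW}, and use that harmonic functions only have integer homogeneities. One step is slightly off: monotonicity gives $\int_{\partial B_\rho}|u|^2\gtrsim\rho^{2N(0)+2\epsilon+n-1}$ only for each fixed $\epsilon>0$, not with $\epsilon=0$, though this still yields $N(0)\geq 1+\alpha$; you can also skip that step entirely, since the $C^1_{\textnormal{loc}}$ convergence in Lemma \ref{lemma:blow-ups} gives $\varphi_i(0)=0$ and $D\varphi_i(0)=0$, which already rules out homogeneous polynomials of degree $0$ or $1$.
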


\begin{proof}
	We know that tangent maps are homogeneous of degree $>1$ by \cite[Corollary 3.4 \& Corollary 3.6]{KMW}. As tangent maps are a special case of a coarse blow-up, by Lemma \ref{lemma:blow-ups} we also know that the tangent map is comprised of $Q$ harmonic functions. These two facts prove the result, as harmonic functions can only have integer homogeneities.
\end{proof}

The following proposition establishes a refined structure of tangent maps in the present setting (an analogous result in the minimising mod $p$ setting appears in \cite[Theorem 3.7]{DLHMS+22}). We will not need it for the proof of Theorem \ref{thm:eps-reg}, but will discuss it further in Section \ref{sec:rest}.

\begin{prop}\label{prop:relations}
	Let $v$ be a tangent map to $u$ at a point in $\mathcal{K}$. Then, $v = \sum^Q_{i=1}\llbracket p+c_iq\rrbracket$, where $p,q$ are $\beta$-homogeneous harmonic functions for some $\beta\geq 2$ and $c_i\in \R$ are constants.
\end{prop}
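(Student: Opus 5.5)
The plan is to exploit the ordering structure of the selection built in the proof of Lemma \ref{lemma:decomposition}, pass it to the tangent map, and then use a nodal-domain argument for homogeneous harmonic functions of equal degree. Recall that the $C^{1,\alpha}$ selection $w_1,\dotsc,w_Q$ was defined by $w_j = \widetilde{u}_j$ where $s=+1$, $w_j = \widetilde{u}_{Q-j+1}$ where $s=-1$, and all $w_j$ coincide on $\mathcal{C}\cup\mathcal{K}$. Hence on each connected component of $\mathcal{R}$ the selection is either increasing or decreasing in $j$. In particular, for all indices $i<j<k$ we have
$$(w_j - w_i)(w_k-w_j)\geq 0 \qquad\text{on } B^n_{1/2}(0),$$
and this holds trivially on $\mathcal{C}\cup\mathcal{K}$. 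A tangent map $v$ at a point of $\mathcal{K}$ is obtained by translating, rescaling and normalising. Arguing as in the proof of Lemma \ref{lemma:blow-ups}, the rescaled selections converge in $C^1_{\text{loc}}$ to $v^{(1)},\dotsc,v^{(Q)}$ with $v=\sum_i\llbracket v^{(i)}\rrbracket$, and the inequality above passes to the limit. So $(v^{(j)}-v^{(i)})(v^{(k)}-v^{(j)})\geq 0$ for all $i<j<k$.

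By Lemma \ref{lemma:blow-ups} and Lemma \ref{lemma:lower-bound}, each $v^{(i)}$ is harmonic, and $v$ is $\beta$-homogeneous (as a $Q$-valued function) for some $\beta\geq 2$. Taking $\beta$ to be the common homogeneity of the tangent map from \cite{KMW}, each $v^{(i)}$ is $\beta$-homogeneous harmonic, and so is every difference $d_{ij}:=v^{(j)}-v^{(i)}$. The proposition then reduces to the following claim: \emph{if $h,g$ are $\beta$-homogeneous harmonic functions with $hg\geq 0$ everywhere, then $g = ch$ for some $c\in\R$ (or $h\equiv 0$).} Granting the claim, set $p:=v^{(1)}$. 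Choose any nonzero difference $q:=d_{ab}$ (if none exists, take $q=0$). Every $d_{1k}$ is then proportional to $q$, by applying the claim along ordered triples. For example, $d_{1j}$ and $d_{jk}$ satisfy $d_{1j}d_{jk}\geq 0$, and $d_{1k}=d_{1j}+d_{jk}$. Zero differences are skipped by choosing triples that avoid them. This gives $v^{(k)} = p + c_k q$.

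To prove the claim, restrict to the sphere $S^{n-1}$. There $h$ and $g$ are eigenfunctions of the Laplace--Beltrami operator with the same eigenvalue $\lambda=\beta(\beta+n-2)$. Let $\Omega$ be a nodal domain of $h|_{S^{n-1}}$, say with $h>0$ on $\Omega$. Then $g\geq 0$ on $\Omega$, and $h$ is the first Dirichlet eigenfunction of $\Omega$, with eigenvalue $\lambda$. Green's identity on $\Omega$ gives
$$0=\int_\Omega (g\Delta h - h\Delta g) = \int_{\del\Omega} g\,\del_\nu h .$$
Here $\del_\nu h<0$ on the regular part of $\del\Omega$ by Hopf, and $g\geq 0$, so $g=0$ on $\del\Omega$. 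Thus $g|_\Omega$ is a nonnegative Dirichlet eigenfunction with eigenvalue $\lambda_1(\Omega)$. By simplicity of the first eigenvalue, $g=c_\Omega h$ on $\Omega$. Unique continuation applied to $g-c_\Omega h$ then gives $g=c_\Omega h$ globally.

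The main obstacle is justifying Green's identity on a nodal domain whose boundary may be singular. I would handle this by using that the singular part of the nodal set $\{h=0,\nabla h=0\}$ has dimension $\leq n-3$ on the sphere. One then integrates against a cutoff vanishing near the singular set, whose gradient contribution is controlled because this set has zero capacity in dimension $n-1$. A second, more minor, point is to check that the labelled selection converges consistently under the blow-up, so that the ordering inequality really survives. This should follow from the $C^{1,\alpha}$ estimates on each $w_j$ used in Lemma \ref{lemma:blow-ups}.
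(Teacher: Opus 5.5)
Your proof is correct, but it reaches the key step by a different mechanism from the paper. The paper never uses the ordering of the selection. It sets $p=v_1$ and $w_i=v_i-v_1$, and imports from [MW24, Theorem 3.12] that blow-ups in $\mathfrak{B}_Q$ have no classical singularities or touching points of density $<Q$. Hence all nonzero $w_i$ have the same zero set. On each common nodal domain, $w_i$ and $w_j$ are then sign-definite Dirichlet eigenfunctions that vanish on the boundary by construction. Simplicity of the first eigenvalue and unique continuation finish the proof.

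You instead carry the monotone structure of the alternating selection from Lemma \ref{lemma:decomposition} through the blow-up, obtaining $(v^{(j)}-v^{(i)})(v^{(k)}-v^{(j)})\geq 0$ for $i<j<k$. You then prove that $hg\geq 0$ forces proportionality for $\beta$-homogeneous harmonic functions of the same degree. Your Green's identity step, showing that $g$ must vanish on the regular part of $\partial\Omega$, is exactly what replaces the paper's imported coincidence of zero sets, and your argument in fact recovers that coincidence as a consequence.

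What your route buys is independence from the fine structure theory of coarse blow-ups: you use only the explicit ordering and elementary spectral theory. You also get the extra information that $v^{(j)}$ lies between $v^{(i)}$ and $v^{(k)}$.

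What it costs is two pieces of technical work. The first is the analysis at singular nodal points. Your cutoff works, and it only needs $\mathcal{H}^{n-2}$-nullity of the singular nodal set in $S^{n-1}$, since $g\nabla h-h\nabla g$ is bounded. Alternatively, you can integrate over $\{h>\epsilon\}$ for regular values $\epsilon$ and let $\epsilon\to 0$. The second is checking that the labelled sheets converge consistently under the blow-up.

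Your route also leans on the alternating ordering a second time. The paper's zero-set argument applies to every $v\in\mathfrak{B}_Q$, one connected component of the regular set at a time. That is why it is the argument reused in Theorem \ref{thm:App-1} and Section \ref{sec:rest}, where no global ordered selection exists.
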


\begin{proof}
	From Lemma \ref{lemma:lower-bound} we know $v = \sum^Q_{i=1}\llbracket v_i\rrbracket$ where each $v_i$ is a (possibly zero) $\beta$-homogeneous harmonic function for some $\beta\geq 2$.
	
	Set $p := v_1$. If $v = Q\llbracket p\rrbracket$, then set $q = 0$ and the proposition is established. Thus, we may assume that there is some $j\neq 1$ with $v_j\not\equiv v_1$. Set $q:= v_j-v_1$. We claim that the proposition holds with this choice of $p,q$.
	
	For each $i\neq 1$ set $w_i:= v_i-v_1$, and identify $w_i$ with its homogeneous extension to $\R^n$. If $w_i\equiv 0$ then set $c_i = 0$. Otherwise, $w_i,w_j$ are both non-zero (note that $j$ has been fixed here). Since from \cite[Theorem 3.12]{MW24} we know that the only classical singularities and touching points in $v$ are density $Q$, this implies that the zero sets of $w_i,w_j$ must coincide. Consider a connected component $\Omega$ of $\{w_i \neq 0\} = \{w_j\neq 0\}$ (such a component must exist as $w_i\not\equiv 0$). Set $\Sigma:= \Omega\cap S^{n-1}$. Then $w_i|_\Sigma$ and $w_j|_\Sigma$ are Dirichlet eigenfunctions of $\Delta_{S^{n-1}}$ on $\Sigma$ which have a sign. As the space of such eigenfunctions is $1$-dimensional (being multiples of the first eigenfunction) we therefore see that on $\Omega$ we must have $w_i = c_iw_j$ for some constant $c_i$. But as $\Omega$ is open, standard unique continuation for harmonic functions implies that we must have $w_i = c_iw_j$ on all of $\R^n$. Hence, $v_i - v_1 = c_i(v_j-v_1)$, i.e.~$v_i = p + c_iq$. This completes the proof.
\end{proof}

\begin{lemma}\label{lemma:regularity}
	$u_i\in C_{\textnormal{loc}}^{1,1}(B^n_{1/2}(0))$ and $\|u_{i}\|_{C^{1,1}(B^n_{1/4}(0))} \leq C\|u\|_{L^2(B_{1/2}^n(0))}$ for all $i=1,\dotsc,Q$.
\end{lemma}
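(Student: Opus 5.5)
We aim to prove a quadratic decay estimate at points of $\mathcal{K}$ and then combine it with interior Schauder estimates away from $\mathcal{K}$. Off $\mathcal{K}$ each $u_i$ is smooth and solves the MSE by Lemma \ref{lemma:decomposition}. So the only issue is uniform control of $D^2u_i$ as one approaches $\mathcal{K}$.

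\textbf{Step 1: quadratic decay at $\mathcal{K}$.} The central claim is the following. For $y\in\mathcal{K}\cap B^n_{3/8}(0)$ and $\rho\in(0,1/16)$, with $a(y):=u_1(y)$ and $b(y):=Du_1(y)$ (common to all $i$ by definition of $\mathcal{K}$), we want
$$\sup_{B_\rho(y)}\max_i|u_i(x)-a(y)-b(y)\cdot(x-y)|\leq C\rho^2\|u\|_{L^2(B^n_{1/2}(0))}.$$
We will obtain this from the planar frequency function of \cite{KMW}, centred at $y$ and relative to the tangent plane $\graph(a+b\cdot(x-y))$. By \cite[Corollary 3.4 \& 3.6]{KMW}, the planar frequency is almost monotone and its limit at $y$ is the homogeneity of a tangent map. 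Lemma \ref{lemma:lower-bound} says this homogeneity is an integer $\geq 2$. Almost-monotonicity then gives a lower bound $N_y(\rho)\geq 2-C\rho^\gamma$ for the planar frequency at all small scales. Integrating the usual identity $\frac{d}{d\rho}\log(\rho^{1-n}H(\rho))=2N(\rho)/\rho$ yields $H_y(\rho)\leq C\rho^{n-1+4}H_y(1/8)$, where $H$ is the planar height; the error $C\rho^\gamma$ integrates to a bounded constant. Elliptic-type $L^\infty$–$L^2$ estimates for $\S_Q$ from \cite[Theorem A]{MW24}, applied at scale $\rho$, convert this $L^2$ decay into sup decay. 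We also need to normalise $H_y(1/8)\leq C\|u\|_{L^2}^2$, which follows from the $C^{1,\alpha}$ estimate on $u$.

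\textbf{Step 2: Hessian bound off $\mathcal{K}$.} Take $x\notin\mathcal{K}$ with $d:=\dist(x,\mathcal{K})$ small, and pick $y\in\mathcal{K}$ realising this distance. On $B_{d/2}(x)$, all $u_i-a(y)-b(y)\cdot(\cdot-y)$ solve an equation close to the MSE. The relevant equation is the MSE itself, which is invariant under subtracting affine functions up to a small rotation, since $|Du|$ is small. By Step 1 they are bounded by $Cd^2\|u\|_{L^2}$ there. Interior $C^2$ estimates for the MSE with small gradient, after rescaling $B_{d/2}(x)$ to unit size, give $|D^2u_i(x)|\leq C\|u\|_{L^2}$. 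The scaling is $d^2/d^2$, which is exactly why decay of order $2$ is needed. Points with $d$ bounded below are handled directly by Schauder estimates, or by smallness of the excess. Points of $\mathcal{K}$ itself have Hessian zero in the sense of Step 1.

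\textbf{Step 3: Lipschitz bound for $Du_i$.} Take $x,z\in B_{1/4}$. If the segment $[x,z]$ avoids $\mathcal{K}$, integrate the Hessian bound along it. Otherwise split the segment at points of $\mathcal{K}$, noting that $Du_i$ is $C^1$ on each open piece and continuous across. Alternatively, combine Step 1 at the nearest $\mathcal{K}$-point with Step 2 in the standard Campanato-style case split, $|x-z|\lessgtr\dist(x,\mathcal{K})$. This yields $[Du_i]_{0,1}\leq C\|u\|_{L^2}$, and $\|u_i\|_{C^1}$ is controlled already.

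\textbf{Main obstacle.} The hardest step is Step 1. It requires the frequency lower bound to hold \emph{uniformly} in the centre $y\in\mathcal{K}$ and quantitatively at all scales, rather than only in the limit. I would first try to get this from the almost-monotonicity of the planar frequency, with constants depending only on $n,Q$, via the smallness of the excess. If almost-monotonicity alone does not give uniformity, I would fall back on a compactness/contradiction argument. In that argument, blow-ups are coarse blow-ups in $\mathfrak{B}^\pm_Q$, harmonic by Lemma \ref{lemma:blow-ups}, and vanish to order $\geq 2$ at the base point. A nonintegrable decay rate $\rho^{2-\delta}$ would then contradict integrality of the homogeneity and the rigidity given by Lemma \ref{lemma:lower-bound}.
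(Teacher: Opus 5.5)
Your proposal follows essentially the same route as the paper. You get quadratic decay at points of $\mathcal{K}$ from the planar frequency of \cite{KMW} being $\geq 2$ (via Lemma \ref{lemma:lower-bound}), upgrade it to sup and gradient decay with \cite[Theorem A]{MW24}, and then combine it with interior MSE estimates on $B_{\dist(y,\mathcal{K})}(y)$ through the case split $|x-y|\lessgtr\frac14\dist(y,\mathcal{K})$. The one difference is the uniformity in $y$ you flag: the paper does not obtain constants depending only on $(n,Q)$, but uses the upper semi-continuity of the planar frequency \cite[Corollary 3.5]{KMW} to get a uniform upper bound on $\mathcal{N}_V$ along $\mathcal{K}$, so $C$ depends on that bound, as the note after the lemma states.
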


\textbf{Note:} The constant $C$ in Lemma \ref{lemma:regularity} depends on an upper bound for the planar frequency in a neighbourhood, not just on $n,Q$. This won't impact the arguments to follow.

\begin{proof}
	If $\mathcal{K} = \emptyset$, then by Lemma \ref{lemma:decomposition} we know that $u_i$ solves the MSE and the lemma trivially follows from elliptic regularity. So, we may assume that $\mathcal{K}\neq \emptyset$.
	
	Suppose first that $u(0) = Q\llbracket 0\rrbracket$ and $Du(0) = Q\llbracket 0\rrbracket$. Then by \cite[Corollary 3.4]{KMW} we have for any $\rho\in (0,7/16)$,
	$$\rho^{-n}\int_{B^n_\rho(0)}|u|^2 \leq C\rho^{2\mathcal{N}_V(0)}\|u\|_{L^2(B_{15/32}^n(0))}^2$$
	where $\mathcal{N}_V(0)$ is the planar frequency of $V$ at $0\in \spt\|V\|$, and $C = C(n,Q,\mathcal{N}_V(0))$. From \cite[Corollary 3.6]{KMW} we know that $\mathcal{N}_V(0)$ is determined by the homogeneity of the tangent map to $V$ at $0$, which is $\geq 2$ by Lemma \ref{lemma:lower-bound}. Thus,
	\begin{equation}\label{E:freq-1}
		\int_{B_\rho^n(0)}|u|^2 \leq C\rho^{n+4}\|u\|_{L^2(B^n_{15/32}(0))}^2 \qquad \text{for all }\rho\in (0,7/16).
	\end{equation}
	Using (the rescaled version of) the estimate in \cite[Theorem A]{MW24} combined with \eqref{E:freq-1} we then have, for each $\rho\in (0,13/32)$,
	\begin{equation}\label{E:freq-2}
		\sup_{B^n_\rho(0)}|Du| + \rho^{-1}\sup_{B^n_\rho(0)}|u| \leq C\left(\rho^{-n-2}\int_{B^n_{14\rho/13}(0)}|u|^2\right)^{1/2} \leq C\rho\|u\|_{L^2(B^n_{15/32}(0))}.
	\end{equation}
	Now at a general $y\in \mathcal{K}\cap B_{7/16}^n(0)$, one can apply \eqref{E:freq-2} to the multi-valued function $\tilde{u}_y$ defined over the (unique, multiplicity $Q$) affine tangent plane to $V$ at $(u_1(y),y)$ whose graph coincides with $V$. Using that $|Du|$ is small, we may rewrite the resulting expression in terms of $u$ to arrive at
	\begin{equation*}
		\sup_{B^n_\rho(y)}|Du-DL_y| + \rho^{-1}\sup_{B^n_\rho(y)}|u-L_y| \leq C\rho\left(\int_{B^n_{1/2}(0)}|u-L_y|^2\right)^{1/2} \qquad \forall\rho\in (0,\tfrac{7}{16}\left(\tfrac{1}{2}-|y|\right)).
	\end{equation*}
	where $L_y(x):= u_i(y) + (x-y)\cdot Du_i(y)$ for any $i=1,\dotsc,Q$ (recall that $u_i(y) = u_j(y)$ and $Du_i(y) = Du_j(y)$ for all $i,j$). In particular, we have for any $i=1,\dotsc,Q$ and $y\in \mathcal{K}\cap B_{7/16}^n(0)$:
	\begin{equation}\label{E:freq-4}
		\sup_{B^n_\rho(y)}|Du_i-DL_y| + \rho^{-1}\sup_{B^n_\rho(y)}|u_i - L_y| \leq C\rho\left(\int_{B^n_{1/2}(0)}|u-L_y|^2\right)^{1/2} \qquad \forall \rho\in (0,\tfrac{7}{16}\left(\tfrac{1}{2}-|y|\right)),
	\end{equation}
	We stress that, whilst the constant $C$ depends on (an upper bound for) the planar frequency of $V$ at $y$, by upper semi-continuity of planar frequency \cite[Corollary 3.5]{KMW} we can take a uniform upper bound which holds at all $y\in\mathcal{K}\cap B^n_{7/16}(0)$, and so $C$ only depends on this global upper bound.
	
	Now fix any $x,y\in B^n_{1/16}(0)$ distinct and fix $i\in \{1,\dotsc,Q\}$. Suppose without loss of generality $\dist(y,\mathcal{K})\leq \dist(x,\mathcal{K})$, and take $y_*\in\mathcal{K}$ with $|y-y_*| = \dist(y,\mathcal{K})$ (if $\dist(y,\mathcal{K}) = 0$, simply take $y= y_*$). For simplicity, we write $A:= C\|u-L_{y_*}\|_{L^2(B^n_{1/2}(0))}$. We consider two cases:
	\begin{enumerate}
		\item [(1)] $\frac{1}{4}\dist(y,\mathcal{K})\leq |x-y|$;
		\item [(2)] $\frac{1}{4}\dist(y,\mathcal{K})>|x-y|$.
	\end{enumerate}
	Consider first case (1). Then we have $|y-y_*| \leq 4|x-y|$, and \eqref{E:freq-4} gives
	\begin{align}
	\nonumber|Du_i(x)-Du_i(y)| & \leq |Du_i(x) - Du_i(y_*)| + |Du_i(y_*)-Du_i(y)|\\
	& \leq A|x-y_*| + A|y-y_*|\nonumber\\
	& \leq A|x-y| + 2A|y-y_*|\nonumber\\
	& \leq 9A|x-y|.\label{E:freq-6}
	\end{align}
	
	In the case (2), set $\rho:= |x-y|$. We know $B^n_{\dist(y,\mathcal{K})}(y)\cap \mathcal{K} = \emptyset$, and thus from Lemma \ref{lemma:decomposition} we know that $u_i$ solves the MSE on $B^n_{\dist(y,\mathcal{K})}(y)$. Also, $B^n_{4\rho}(y)\subset B^n_{\dist(y,\mathcal{K})}(y)$ by assumption. Thus, by standard estimates for the MSE (or, equivalently, one can apply the equivalent estimate to \eqref{E:freq-4} with $u_i$ in place of $u$ on the ball $B^n_{\dist(y,\mathcal{K})}(y)$) we have
	\begin{align*}
	|Du_i(x)-Du_i(y)| & \leq \rho^{-1}|x-y|\left((2\rho)^{-n-2}\int_{B^n_{2\rho}(y)}|u_i-L_{y}|^2\right)^{1/2}\\
	& \leq  C|x-y|\left(\dist(y,\mathcal{K})^{-n-4}\int_{B^n_{\dist(y,\mathcal{K})}(y)}|u_i-L_y|^2\right)^{1/2}.
	\end{align*}
	Using \eqref{E:freq-4} with $y_*$ in place of $y$ and $2\dist(y,\mathcal{K})$ in place of $\rho$ we then have
	\begin{align*}
		\int_{B^n_{\dist(y,\mathcal{K})}(y)}|u_i-L_y|^2 & \leq 2\int_{B^n_{2\dist(y,\mathcal{K})}(y_*)}|u_i-L_{y_*}|^2 + |L_y-L_{y_*}|^2\\
		& \leq C\dist(y,\mathcal{K})^{n+4}\int_{B^n_{1/2}(0)}|u-L_{y_*}|^2 + 2\int_{B^n_{2\dist(y,\mathcal{K})}(y_*)}|L_y-L_{y_*}|^2
	\end{align*}
	To control the last term here, note that
	\begin{align*}
	L_y(z) - L_{y_*}(z) = (u_i(y) - L_{y_*}(y)) + (z-y)\cdot (Du_i(y)-Du_i(y_*))
	\end{align*}
	and so for $z\in B^n_{2\dist(y,\mathcal{K})}(y_*)$, using \eqref{E:freq-4} and $|z-y| \leq 3\dist(y,\mathcal{K})$ we have the pointwise bound
	\begin{align*}
		|L_y(z) - L_{y_*}(z)| \leq C\dist(y,\mathcal{K})^2\left(\int_{B^n_{1/2}(0)}|u-L_{y_*}|^2\right)^{1/2}
	\end{align*}
	and hence combining the above inequalities we have
	$$\int_{B^n_{\dist(y,\mathcal{K})}(y)}|u_i-L_y|^2 \leq C\dist(y,\mathcal{K})^{n+4}\int_{B^n_{1/2}(0)}|u-L_{y_*}|^2$$
	and so
	\begin{equation}\label{E:freq-7}
		|Du_i(x) - Du_i(y)| \leq C|x-y|\left(\int_{B^n_{1/2}(0)}|u-L_{y_*}|^2\right)^{1/2}.
	\end{equation}
	Combining \eqref{E:freq-6}, \eqref{E:freq-7}, we see that, for any $x,y\in B^n_{1/16}(0)$, there is some $y_*\in B_{1/4}^n(0)$ (depending on $x,y$) for which
	$$|Du_i(x)-Du_i(y)| \leq C|x-y|\left(\int_{B^n_{1/2}(0)}|u-L_{y_*}|^2\right)^{1/2}.$$
	Finally, since $\sup_{B^n_{1/4}(0)}(|u| + |Du|) \leq C\|u\|_{L^2(B^n_{1/2}(0))}$, we have $\sup_{B^n_{1/2}(0)}|L_{y_*}| \leq C\|u\|_{L^2(B^n_{1/2}(0))}$, and hence
	$$|Du_i(x) - Du_i(y)| \leq C|x-y|\left(\int_{B^n_{1/2}(0)}|u|^2\right)^{1/2}.$$
	This shows that $Du_i\in C^{0,1}(B^n_{1/16}(0))$, with the desired estimate. By translating, one similar gets that $Du_i\in C^{0,1}_{\text{loc}}(B^n_{1/2}(0))$, with the claimed estimate on $B^n_{1/4}(0)$. This completes the proof.
\end{proof}

\subsection{Proof of Theorem \ref{thm:eps-reg}}

By \cite[Theorem A]{MW24} we know that there is a generalised-$C^{1,\alpha}$ function $u:B^n_{1/2}(0)\to \A_Q(\R)$ for which $V \res (\R\times B^n_{1/2}(0)) = \mathbf{v}(u)$. Hence by Lemma \ref{lemma:decomposition} and Lemma \ref{lemma:regularity}, we know that $u = \sum^Q_{i=1}\mathbf{v}(u_i)$ where $u_i:B^n_{1/2}(0)\to \R$ are $C^{1,1}$ and solve the MSE outside of the $Q$-touching set
$$\mathcal{K}:= \{x\in B^n_{1/2}(0):u_i(x) = u_j(x) \text{ and }Du_i(x) = Du_j(x) \text{ for all }i\neq j\}.$$
Our aim is to show that in fact the $u_i$ solve the MSE across $\mathcal{K}$ as well; the claimed estimates then follow from standard elliptic regularity. If $\mathcal{K} = B^n_{1/2}(0)$ then $u_i\equiv u_j$ for all $i, j$ and so $u = Q\llbracket u_1\rrbracket$. Hence as $V$ is stationary this implies that $u_1$ is minimal. Since $u_1$ is $C^{1,1}$ this then implies that $u_1$ is smooth and we are done.

Therefore, we may assume that $\mathcal{K}\neq B^n_{1/2}(0)$. In particular, there are $i, j$ for which $u_i\not\equiv u_j$; without loss of generality suppose $u_1\not\equiv u_2$. As $\mathcal{K}$ is not a level-set of any $u_i$ we will show that $\dim_\H(\mathcal{K})\leq n-2$; this then completes the proof as each $u_i$ is $C^{1,1}$ and (closed) sets which are $\H^{n-1}$-null are removable from the MSE.

To see that $\dim_\H(\mathcal{K})\leq n-2$, consider $f:= u_1-u_2\not\equiv 0$. Notice that $f\in C^{1,1}$ and that
$$\mathcal{K}\subseteq \{x\in B^n_{1/2}(0): f(x) = 0\text{ and }Df(x) = 0\}$$
(in fact, this is an equality in the present situation). Moreover, on $B^n_{1/2}(0)\setminus \mathcal{K}$, $f$ solves (pointwise)
$$\div(A(x)Df) = 0$$
where $A\in C^{0,1}$; more precisely, writing $F^{ij}(p):= (1+|p|^2)^{-1/2}\big(\delta_{ij} - \frac{p_ip_j}{1+|p|^2}\big)$, we have
$$A(x) = \int^1_0 F^{ij}(tDf(x) + Du_2(x))\, \ext t$$
and hence one sees that the regularity of the $u_i$ is directly tied to the regularity of $A$ (and so in order to get that $A$ is Lipschitz we needed to know that the $u_i$ are $C^{1,1}$). Now, notice that almost everywhere we have that $A$ and $Df$ are differentiable, and so almost everywhere we have
$$\div(ADf) = AD^2f + DA\cdot Df.$$
We know that this expression vanishes on $B^n_{1/2}(0)\setminus \mathcal{K}$. Also, by definition, we know that $Df=0$ on $\mathcal{K}$, and furthermore, as $\mathcal{K}$ is contained within a level-set of $Df$, we know that $\H^n$-a.e.~on $\mathcal{K}$ we must have $D^2f = 0$ (this follows by standard upper-density arguments). Thus, $\H^n$-a.e.~on $\mathcal{K}$ we have $AD^2f + DA\cdot Df = 0$, which means that $\H^n$-a.e.~on $B^n_{1/2}(0)$ we have $\div(ADf) = 0$. In particular, as $f \in C^{1,1}$ and $A\in C^{0,1}$ we know that $A Df\in C^{0,1}$ and hence $A Df\in W^{1,\infty}$, meaning that for any $\phi\in C^{\infty}_c(B^n_{1/2}(0))$ we have
$$0 = \int \div(ADf)\phi = - \int ADf\cdot D\phi$$
and so $f$ is a weak solution to $\div(ADf) = 0$. In particular, as $A\in C^{0,1}$ and $f\in W^{1,2}$, by the work of Garofalo--Lin \cite{GarofaloLin86} we know that $f$ has a monotone frequency function. From this and the $C^{1,1}$ regularity of $f$, standard arguments now give that $\dim_\H(\{f=0,Df=0\})\leq n-2$. Hence, $\dim_\H(\mathcal{K})\leq n-2$, thereby completing the proof.
\qed

\section{Proof of Regularity \& Compactness}\label{sec:other}

We first classify alternating cones which are sums of hyperplanes. Here, \emph{alternating} is analogous to Definition \ref{defn:orientation}, namely that the regular part of the cone is equipped with an orientation such that at each classical singularity of the cone, the half-hyperplanes forming the classical singularity induced the same orientation on their common boundary.

\begin{lemma}\label{lemma:spines}
	Consider $\BC = \sum^N_{i=1}q_i|P_i|$, where $P_i$ are distinct hyperplanes in $\R^{n+1}$ passing through $0$ and $N,q_i\in \Z_{\geq 1}$. Suppose that $\BC$ is alternating. Then either:
	\begin{enumerate}
		\item [\textnormal{(i)}] $N=1$, i.e.~$\BC = q_1|P_1|$ is a single hyperplane with multiplicity $q_1\geq 1$; or
		\item [\textnormal{(ii)}] $\dim(P_1\cap P_2\cap\cdots\cap P_N) = n-1$ (or equivalently $\dim(S(\BC))=n-1$).
	\end{enumerate}
\end{lemma}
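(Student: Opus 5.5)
The plan is to reduce to a configuration of great circles on $S^2$, and then show that the alternating condition forces the graph those circles cut out to be bipartite, which fails for any non-concurrent arrangement. Throughout, suppose $N\geq 2$ and that (ii) fails; we aim for a contradiction. The multiplicities $q_i$ play no role, since the alternating condition only concerns orientations on the support.

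\emph{Reduction to three dimensions.} Since $N\geq 2$, $L:=P_1\cap P_2$ is $(n-1)$-dimensional. As (ii) fails, some $P_3$ does not contain $L$, so $L':=P_1\cap P_2\cap P_3$ has dimension exactly $n-2$. Choose a point $z\in L'\setminus\{0\}$ that lies in no $P_k$ with $P_k\not\supseteq L'$; this is possible since each such $P_k\cap L'$ is a proper subspace of $L'$. Near $z$, $\BC$ coincides with a translate of the cone $\BC' := \sum_{k:\,P_k\supseteq L'} q_k|P_k|$, which splits as $L'\times \BC_0$. Here $\BC_0$ is a sum of at least three distinct $2$-planes through $0$ in $L'^\perp\cong\R^3$, and they are not all through a common line, because $P_3\not\supseteq L$. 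The alternating orientation restricts to $\BC'$ near $z$ and is invariant in the $L'$ directions after shrinking, so it induces an alternating orientation on $\BC_0$. It therefore suffices to show that a cone in $\R^3$ made of $m\geq 3$ distinct planes, not all sharing a common line, cannot be alternating.

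\emph{Combinatorial reformulation.} Intersect $\BC_0$ with $S^2$ to get $m$ great circles. Let $G$ be the graph whose vertices are the points where at least two circles meet and whose edges are the arcs between consecutive vertices. The cone over an open arc is a connected component of $\reg(\BC_0)$, so it carries a continuous unit normal. Equivalently, by codimension one in $S^2$, the arc carries a direction of travel. At each vertex the cone is a classical singularity whose spine is a line, made of half-planes meeting along it. By remark (b) after Definition~\ref{defn:orientation}, alternating means every vertex is either a source (all incident arcs pointing away) or a sink (all pointing toward it). An arc with a fixed direction points away from one endpoint and toward the other. Hence every edge joins a source to a sink, and $G$ is bipartite.

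\emph{Main obstacle: an odd cycle.} It remains to exhibit an odd cycle in $G$, and I expect this to be the main step. I would use the classical fact (Levi/Melchior) that an arrangement of at least three lines in $\R\mathrm{P}^2$, not all concurrent, has a triangular face. Lifting to $S^2$, the three boundary arcs of this face form a $3$-cycle in $G$. This contradicts bipartiteness, so $N=1$ or (ii) holds. If I want to avoid citing Levi's theorem, a fallback is to take three of the circles in general position: they form an octahedral arrangement, and any further circles only subdivide it. In that case I would instead check directly that the source/sink labelling must flip at each vertex along a great circle, and count vertices modulo $2$ around a suitable face. The real content here is making sure that an odd face survives the addition of the extra circles.
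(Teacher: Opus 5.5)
Your proposal is correct and follows the paper's route: you pass to a tangent cone at a generic point of the $(n-2)$-skeleton to reduce to a trivial-spine cone in $\R^3$, then use the source/sink form of the alternating condition to make the link graph on $S^2$ bipartite. The only real difference is the final step. You cite Levi's triangle theorem, whereas the paper proves what is needed directly from Euler's formula $V+F=E+2$: every vertex has degree at least $4$, and bipartiteness plus the absence of $2$-cycles forces every face to have at least $4$ edges, so $2E\geq 4V$ and $2E\geq 4F$ give a contradiction. This short count replaces both the citation and your vaguer octahedral fallback.
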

\begin{proof}
	We look case-by-case on $\sfrak:=\dim(S(\BC))$, the dimension of the spine $S(\BC) \equiv P_1\cap P_2\cap \cdots \cap P_N$ of $\BC$. If $\sfrak=n$, then we are in case (i). If $\sfrak=n-1$, then we are in case (ii).
	
	Consider the case $\sfrak=n-2$. Then after an ambient rotation we may write $\BC = \BC_0\times \R^{n-2}$, where $\BC_0 = \sum^N_{i=1}q_i|\widetilde{P}_i|$ for some distinct $2$-planes $\widetilde{P}_i\subset\R^3$ passing through $0$. Consider the multiplicity one link $\Sigma:= S^2\cap (\cup_{i=1}^N \widetilde{P}_i)$, which is a geodesic network on $S^2$. 
	
	Consider the combinatorial graph associated to $\Sigma$, namely $G := (V,E)$, where:
	\begin{itemize}
	\item the vertex set $V$ is the set of all points on $S^2$ where two or more $P_i$ intersect;
	\item the edge set $E$ consists of the connected components of $\Sigma\setminus V$, namely two vertices are joined by an edge if they are the boundary points of a connected component of $\Sigma\setminus V$.
	\end{itemize}
	Note the following properties of $G$:
	\begin{enumerate}
		\item [(a)] Every vertex in $V$ has degree at least $4$ (as vertices arise from the intersection of two or more planes);
		\item [(b)] $G$ contains no odd cycles (as $\BC$ is alternating, each vertex must act as a source or a sink);
		\item [(c)] $G$ contains no cycles of order $2$.
	\end{enumerate}
	Property (c) is seen as follows. If there were a cycle of order $2$, then it would be formed by two edges joining the same two vertices, and so the edges would need to intersect at two points. Each edge however is a great circle arc, which only intersect at antipodal points of $S^2$. In particular, the link $\Sigma$ contains an entire half-great circle whose interior does not intersect any other arcs in $\Sigma$. As $\BC_0$ is comprised of planes, this is impossible unless all the $\widetilde{P}_i$ contain the same line, which would imply $\dim(\BC_0) \geq 1$, a contradiction.\footnote{Alternatively, this can be seen by the maximum principle: simply rotate the half-great circle arc until it touches another point of $\Sigma$ in its interior. By the maximum principle for stationary networks, they would then need to coincide. Repeating inductively shows that $\Sigma$ consists of half-great circles passing through the same two antipodal points.}
	
	With a slight abuse of notation, write $V = \#V$ and $E = \#E$. Also write $F$ for the number of faces on $S^2$ determined by $G$. Euler's polyhedral formula gives
	$$V+F=E+2.$$
	Elementary counting from (a) gives $2E\geq 4V$, and similarly from (b) and (c) we get $2E\geq 4F$ (as every face must be bounded by at least $4$ edges). Combining these inequalities with Euler's formula above gives a contradiction (as $2\geq 0$). Thus $\sfrak\neq n-2$.
	
	We now claim that $\sfrak \neq \ell$ for any $\ell\leq n-2$. Indeed, suppose this were false. Let $\ell$ by the largest integer $<n-2$ for which there is a cone with $\sfrak=\ell$. There are two cases:
	\begin{enumerate}
		\item [(i)] $\dim(P_i\cap P_j\cap P_k) = n-1$ for all $i<j<k$;
		\item [(ii)] $\dim(P_i\cap P_j\cap P_k) = n-2$ for some $i<j<k$.
	\end{enumerate}
	Note that one of (i) or (ii) must occur. If (i) occurs, then as $P_1\cap P_2$ is $(n-1)$-dimensional, we see that $P_j$ must contain this $(n-1)$-dimensional subspace for all $j$. But then this would imply that $\sfrak = n-1$, a contradiction.
	
	If (ii) occurs, then the $(n-2)$-skeleton of the hyperplane arrangement $\cup_i P_i$ is necessarily non-empty (as all lower skeleta have dimension $\leq n-3$). Taking a tangent cone at a point in the $(n-2)$-skeleton produces a new cone of spine dimension $n-2$ which obeys all the assumptions of the lemma, which we just proved was impossible. Thus in either case we arrive at a contradiction. This completes the proof.
\end{proof}

We can now prove Theorem \ref{thm:main-reg-2}.

\begin{proof}[Proof of Theorem \ref{thm:main-reg-2}]
	By Allard's compactness theorem \cite{Allard72}, we know we can pass to a subsequence such that $V_j\rightharpoonup V$ in $B^{n+1}_2(0)$ for some stationary integral varifold $V$ in $B^{n+1}_2(0)$. Let $x\in \sing(V)$. We will analyse the various possibilities by looking at the strata $\widetilde{S}_i:= S_i\setminus S_{i-1}$ which $x$ belongs to, where here $S_i$ denotes the usual Almgren stratification of $\sing(V)$, with $S_{-1}=\emptyset$.
	
	\textbf{Case 1:} $x\in \widetilde{S}_n$. Then there exists a tangent cone $\BC$ with $\BC = q|P|$ for some hyperplane $P$ and $q\in \Z_{\geq 1}$. If $q>Q$, then $x\in W_2$. If $q\leq Q$, then for some $\rho>0$ and all sufficiently large $j$ we can apply Theorem \ref{thm:eps-reg} to $V_j\res B^{n+1}_{2\rho}(x)$, and thus we see that $V\res B^{n+1}_\rho(x)$ is given by a sum of $q$ minimal graphs. In particular, $x\in \mathcal{R}$ and the convergence (of each graph) is smooth on $B^{n+1}_\rho(x)$.
	
	\textbf{Case 2:} $x\in \widetilde{S}_{n-1}$. Then there exists a tangent cone $\BC$ with $\dim(S(\BC))=n-1$, so that $0\in \BC$ is a classical singularity of $\BC$. We must have $\Theta_{\BC}(0)\geq Q$, else one could apply \cite[Theorem C]{MW24} to $V_j$ about $x$ for sufficiently large $j$ to see that $V_j$ has a classical singularity with density $<Q$, contradicting the definition of $\S_Q^\pm$. If $\Theta_{\BC}(0)>Q$, then $x\in W_2$. Otherwise $\Theta_{\BC}(0) = Q$, and again by \cite[Theorem C]{MW24} we have that both $\BC$ must be a sum of $Q$ hyperplanes, and that the convergence of $V_j$ to $V$ must be smoothly as immersions locally about $x$. So $x\in \mathcal{R}$.
	
	\textbf{Case 3:} $x\in \widetilde{S}_i$ for some $i\leq n-2$. Let $\BC$ be a tangent cone at $x$. If $\BC$ contains any classical singularities with density $>Q$, then $x\in W_2$. So, we may assume that all tangent cones only contain classical singularities of density $\leq Q$, which again by \cite[Theorem C]{MW24} must necessarily be immersed classical singularities of density exactly $Q$. We then have two cases:
	\begin{enumerate}
		\item [(a)] $x\in \widetilde{S}_{n-2}$;
		\item [(b)] $x\not\in \widetilde{S}_{n-2}$.
	\end{enumerate}
	In case (a), there is a tangent cone $\BC$ at $x$ with $\dim(S(\BC))=n-2$. By looking at the link of the cross-section of $\BC$, as all classical singularities are immersed, $\BC$ must be the sum of hyperplanes. These hyperplanes must have multiplicity $<Q$, else as all classical singularities have density exactly $Q$, we would need $\BC$ to be a single hyperplane, a contradiction to $x\in\widetilde{S}_{n-2}$. But then as $\BC$ is a varifold limit of some sequence in $\S_Q^\pm$, by Theorem \ref{thm:eps-reg} we have smooth convergence away from $S(\BC)$, and so in particular $\BC$ must be alternating. Hence, by Lemma \ref{lemma:spines}, we get a contradiction. So (a) is impossible.
	
	So we must be in case (b). If a regular component of $\BC$ has multiplicity $>Q$, then $x\in W_2$ (note that in this case, $\BC$ can be free of classical singularities). Otherwise, we may assume that every cone has density $\leq Q$ on its regular part. In particular, we must have from Theorem \ref{thm:eps-reg} that $\BC\res B^{n+1}_2(0)\in\S_Q^\pm$. In particular, $\BC$ is alternating, so from Lemma \ref{lemma:spines} $\BC$ cannot be supported on a union of hyperplanes. Hence $x\in W_1$. This completes the proof.
\end{proof}

\section{The Non-Immersed Case \& Open Questions}\label{sec:rest}

In this section, we give and discuss some open questions for area minimising hypersurfaces mod $p$ left open from the present work, with a particular focus on the general case (where non-immersed classical singularities occur). There are however two questions in the case where all classical singularities are immersed which we mention first. The most straightforward one is:

\textbf{Question 1:} What is the optimal dimension bound on the non-immersed singular set in Theorem \ref{thm:main-reg}?

As mentioned previously, we suspect it should be $n-7$ in line with known singularity models for area minimising hypersurfaces. Nonetheless, as mentioned in Remark \ref{remark:ambient-stable}, the difficulty appears to be in finding a useful form of a stability inequality for an area minimising hypersurface mod $p$. Indeed, the problem comes down to classifying area minimising mod $p$ hypercones in $\R^{n+1}$ for $4\leq n+1\leq 6$ whose links are smoothly immersed. If one could show a version of Simons' classification, namely that such cones must be flat (i.e.~sums of hyperplanes), then one would be done by Lemma \ref{lemma:spines}.

We note that it seems that one must use the area minimising mod $p$ property to improve upon the dimension bound rather than just the stability (on the regular set) and the alternating condition we used in most of this work. Indeed, if one takes $V = \BC_1+\BC_2$, where $\BC_1$ is a hyperplane in $\R^4$ with multiplicity $Q-1$ and $\BC_2$ is the cone over the Clifford torus, then $V\res B^{n+1}_2(0)\in \mathcal{S}_Q^\pm$ and $V$ has a non-immersed singularity at $0$. Hence, the corresponding best dimension bound for elements of $\mathcal{S}_Q^\pm$ is $n-3$, in line with our results. This example is not area minimising mod $p=2Q$.

A slightly more open-ended question is presented by Proposition \ref{prop:relations}. This tells us that the tangent maps at flat singular points are determined entirely by two harmonic functions, which for even $p\geq 6$ restricts the number of unknown functions from $p/2$ to $2$. One could ask whether there is any similar relation between the minimal graphs given in Theorem \ref{thm:eps-reg}:

\textbf{Question 2:} Does an analogue of Proposition \ref{prop:relations} hold for the minimal graphs in Theorem \ref{thm:eps-reg}?

For instance, one can verify that if $f:B^n_1(0)\to \R$ is such that both $f$ and $cf$ are minimal for some constant $c$, then either $f$ is affine or $c\in \{0,\pm 1\}$. We leave this as an exercise for the reader.\footnote{An equivalent formulation is that a function solving both the Laplacian and the $\infty$-Laplacian must be affine.}

Now let us turn to the general (non-immersed) case. Here, \cite[Theorem A]{MW24} still applies to give that the local structure about any flat singular point is given by the graph of a $Q$-valued Lipschitz function $u$ which is in fact generalised-$C^{1,\alpha}$. When there is a non-immersed classical singularity, it is impossible for $u$ to decompose into single-valued functions which are $C^1$. Nonetheless, one can ask to what extent the local structure of $u$ about a flat singular point is determined by its tangent map(s) there. The first natural question to understand for this is:

\textbf{Question 3:} Are tangent maps at (density $Q$) flat singular points in alternating $V\in \S_Q$ unique?

A positive answer to Question 3 coupled with the regularity theorems in \cite{MW24} would imply that the local structure of $V\in \S_Q^\pm$ about a density $Q$ flat singular point is a $C^{1,\alpha}$-perturbation of the (unique) tangent map. In the special case of $2$-dimensional area minimising hypersurfaces mod $p$ in $B^{3}_1(0)$, a positive answer to Question 3 has already been given in \cite{SSS25}.

Furthermore, such tangent maps take on a very special form. Indeed, we saw a special case of this already for the class $\S_Q^\pm$ in Proposition \ref{prop:relations}. For general alternating $V\in \S_Q$, one can argue similarly to the proof of \cite[Theorem 3.7]{DLHMS+22} to show that for any (necessarily homogeneous by \cite{KMW}) tangent map $v:B^n_1(0)\to \A_Q(\R)$, there are homogeneous harmonic polynomials $p,q:B^n_1(0)\to \R$ such that, on each connected component $\Omega$ of $\{q\neq 0\}$, there are constants $c_i(\Omega)$ depending on $\Omega$ such that
$$v|_\Omega = \sum_{i=1}^Q\llbracket p+c_i(\Omega)q\rrbracket.$$
Such a representation tells us that $v$ does not have ``genuine'' branch points. This can be understood in several ways: 
	\begin{itemize}
		\item For any $x\in B^n_1(0)$ and connected component $\Omega$ of the regular set of $v$, in $\Omega\cap B_\rho(x)$ the power series representation of $v$ never contains non-integer powers;
		\item In any connected component $\Omega$ of the regular set of $v$, the smooth functions representing $v_i$ in $\Omega$ extend to smooth functions on $B^n_1(0)$.
	\end{itemize}
Furthermore, as there are finitely many connected components of the regular set of $v$ (by Courant's nodal domain theorem applied to $q$), we see that the graph of $v$ is contained within a finite union of smooth submanifolds with no interior boundary in $\R\times B^n_1(0)$. This would be a natural generalisation of Theorem \ref{thm:main-no-bp} to the general case:

\textbf{Question 4:} If $T$ is an area minimising hypersurface mod $p$ and $x$ is a flat singular point of $T$, must there be a radius $\rho = \rho(x)>0$ and an integer $N=N(x)\in\mathbb{Z}_{\geq 1}$ for which
$$\spt\|T\|\cap B^{n+1}_\rho(x)\subseteq \bigcup^N_{i=1}M_i$$
for some smoothly embedded minimal hypersurfaces $M_i$ with no (interior) boundary in $B_\rho^{n+1}(x)$?

We note that, due to the above representation of tangent maps in the general case, the planar frequency at each flat singular point is $\geq 2$. This means one should be able to upgrade the regularity of the multi-valued function $u$ representing $V$ from generalised-$C^{1,\alpha}$ to generalised-$C^{1,1}$ in a similar way to that seen in Lemma \ref{lemma:regularity} (cf.~Theorem \ref{thm:App-1}).

In fact, there is one special case of Question 4 which the methods of the present paper are already able to answer, namely when all the classical singularities in $\spt\|T\|$ are immersed. We refer the reader to Appendix \ref{app:B} for further discussion.

Next, we have already seen that for general area minimising hypersurfaces mod $p$ it is possible to have singular points with tangent cones of spine dimension $n$ and $n-1$. In Lemma \ref{lemma:spines}, we saw that, when all classical singularities are immersed, it was possible to rule out tangent cones with spine dimension $n-2$. A natural question is whether this is possible to do in general:

\textbf{Question 5:} Suppose $\BC$ is a $2$-dimensional cone in $\R^3$ which is locally area minimising mod $p$. Must $\BC$ then have a spine of dimension $\geq 1$?

We stress that if one tries to argue combinatorially as in our proof of Lemma \ref{lemma:spines}, one first reduces to looking at the support of the link as a combinatorial graph $G$ on $S^2$. One can still argue that $G$ contains no odd cycles or cycles of order $2$. Therefore, if every vertex has degree at least $4$, one arrives at a contradiction in the same way as in Lemma \ref{lemma:spines}. In particular, Question 5 holds under the assumption that the classical singularities in $\spt\smallnorm{\bf{C}}$ are all immersed (cf.~Appendix \ref{app:B}). In the general case, notice that, by construction, each vertex has degree $\geq 3$, and so the only problem is if too many vertices have degree $3$. Vertices of degree $3$ are possible (for $p\neq 4$); when $p\geq 5$, the original link of $\BC$ must have multiplicity on some of the edges which is then lost in the combinatorial graph for this to occur. This gives the following combinatorial problem:

\textbf{Question 5$\mathbf{^\prime}$:} Classify all stationary integral $1$-varifolds on $S^2$ for which each vertex has the same (weighted) degree $Q$ and the associated combinatorial graph contains no odd cycles.

We stress that when $Q=3$, such a classification has already been achieved (see e.g.~\cite[Theorem 1.5(3)]{Tay73}), with the only options being: a great circle; 3 half-great circles between the same antipodal points which meet at $120^\circ$; and the spherical projection of the standard cube. The latter is \emph{not} area minimising mod $3$, and so this shows that Question 5 holds when $p=3$. When $Q=4$ all classical singularities are immersed and so Question 5 follows from Lemma \ref{lemma:spines}.

A positive answer to Question 5 for some $p$ then opens up the question of whether it is possible for a tangent cone to an area minimising hypersurface mod $p$ to have spine dimension between $n-3$ and $n-6$. This is effectively asking the following generalisation of Question 1:

\textbf{Question 6:} Are there any hypercones $\BC$ in $\R^{n+1}$ for $4\leq n+1\leq 6$ which are locally area minimising mod $p$ and have $S(\BC) = \{0\}$?

Appropriate answers to the above questions would then answer the following extension of Theorem \ref{thm:main-reg} to general area minimising hypersurfaces mod $p$:

\textbf{Question 7:} Suppose $T$ is an area minimising hypersurface mod $p$ in $B^{n+1}_2(0)$ with $\del^p T = 0$. Then, is there a relatively closed set $S$ of $B^{n+1}_2(0)$ with $\dim_\H(S)\leq n-7$ such that for each $x\in \spt\|T\|\setminus S$ there is a radius $\rho = \rho(x)>0$ and an integer $N=N(x)\in\mathbb{Z}_{\geq 1}$ for which
$$\spt\|T\|\cap B_\rho^{n+1}(x)\subseteq \bigcup^N_{i=1}M_i$$
for some smoothly embedded minimal hypersurfaces $M_i$ with no (interior) boundary in $B^{n+1}_\rho(x)$?

\appendix

\section{Results for $\S_Q$}\label{app:A}

In this appendix we prove results for the larger class $\S_Q$. These generalise those proved for $\S_Q^\pm$ by not assuming any alternating condition. A key difference is that elements of $\S_Q$ \emph{can} admit branch point singularities, even when all classical singularities are immersed (see e.g.~\cite{SW07}).

For $V\in \S_Q$, write $\mathcal{K}_V^Q$ for the set of density $Q$ flat singular points of $V$. Recall that, by \cite{KMW}, $V$ has a well-defined planar frequency value $\mathcal{N}_V(Z)$ at each $Z\in \mathcal{K}_V^Q$.

Our first result finds the optimal regularity exponent in \cite[Theorem A]{MW24}. We stress that this result allows (density $Q$) \emph{non-immersed} classical singularities.

\begin{theorem}\label{thm:App-1}
Suppose $V\in \S_Q$. Then, $\mathcal{N}_V(Z)\geq 3/2$ for all $Z\in\CK_V^Q$.
\end{theorem}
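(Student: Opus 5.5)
By \cite[Corollary 3.6]{KMW}, $\mathcal{N}_V(Z)$ equals the homogeneity $\beta$ of any tangent map $v$ to $V$ at $Z$. So it suffices to show that every such (non-zero, homogeneous, $Q$-valued) tangent map has homogeneity $\beta\geq 3/2$. We already know $\beta>1$ from \cite[Corollary 3.4]{KMW}, and $v$ lies in the coarse blow-up class $\FB_Q$ of \cite{MW24}. The plan is a dimension reduction on $v$ combined with a classification of the $2$-dimensional case, which is where the value $3/2$ comes from.

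\emph{Step 1: subtract the average.} The average $v_a:=Q^{-1}\sum_i v_i$ is harmonic by \cite[$(\mathfrak{B}3)$]{MW24} and $\beta$-homogeneous. Since $\beta\in(1,3/2)$ is not an integer, $v_a\equiv 0$ on this range. We may therefore assume $v_a\equiv 0$ and $v\not\equiv Q\llbracket 0\rrbracket$.

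\emph{Step 2: dimension reduction.} I would use the structure theory of $\FB_Q$ from \cite{MW24}: coarse blow-ups are generalised-$C^{1,\alpha}$ and satisfy the squash and squeeze identities, so they have a monotone frequency. Suppose $\beta<3/2$. Take a point $y\neq 0$ of the $Q$-touching set of $v$ with the same frequency $\beta$ and blow up there; iterating lowers the dimension of the set of points of maximal frequency. The standard Almgren stratification argument for the frequency then reduces us either to a $v$ that is translation invariant in $n-2$ directions, or to a $v$ whose $Q$-touching set is $\{0\}$. I expect this to be the main obstacle. The alternative is a $v$ whose branch set is $(n-1)$-dimensional. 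In that case $v$ is $\beta$-homogeneous and invariant in $n-1$ directions, so it is a function of a single variable, and then it must be a sum of two linear pieces meeting along a hyperplane. This forces a classical singularity, or harmonicity on each half; with $v_a=0$ and $\beta>1$, such a piecewise homogeneous function in one variable has homogeneity exactly $\beta$, and $\beta$-homogeneous one-variable solutions $c|t|^\beta$ are harmonic only if $\beta\in\{0,1\}$. That contradiction should rule out this case.

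\emph{Step 3: the two-dimensional case.} Write $v(r,\theta)=r^\beta\phi(\theta)$, where $\phi$ is $Q$-valued on $S^1$. Away from the finitely many angles where sheets coincide, each branch of $\phi$ satisfies $\phi''+\beta^2\phi=0$. This holds because the touching set of the cross-section is isolated, by monotonicity of frequency and $\beta$-homogeneity, and the local pieces are harmonic by \cite{MW24}. So each branch is $\cos(\beta(\theta-\theta_0))$ on arcs between touching angles. At a touching angle all values and derivatives coincide with $0$, since $v_a=0$ and $v\in C^1$. Two consecutive zeros of a nontrivial solution with zero derivative would force the solution to vanish identically. Hence consecutive branch arcs instead have to connect through zeros of $\phi$ that are not double zeros. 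Going once around the circle, the branches then permute. A nontrivial non-harmonic configuration therefore requires $\beta\in\frac12\Z$, i.e.\ monodromy of order $2$ such as $\mathrm{Re}(z^{3/2})$, or more generally $\beta\in\frac1k\Z$. The smallest admissible value above $1$ should be $3/2$, since $\beta=k'/k$ with $1<\beta<3/2$ would require that branches coalesce at a non-isolated angle, violating $(\S3)_Q$ via \cite[Theorem 3.12]{MW24}. If it turns out that only $\beta\in\frac12\Z$ survives this monodromy count, the bound follows directly. I would first check this against the minimal example $\mathrm{Re}(z^{3/2})$ of \cite{SW07}.
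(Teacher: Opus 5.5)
Your Step 1 is the paper's first reduction and your one-variable case is fine. Steps 2 and 3, however, do not amount to a proof, and the two facts that actually produce the number $3/2$ are missing.

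\textbf{Step 2.} The dimension reduction is not set up. For a $\beta$-homogeneous $v$, the frequency at a touching point $y\neq 0$ is only $\leq\beta$, so you cannot simply ``take a point with the same frequency''. You would need an extremal argument: minimise $\beta$ over all non-zero homogeneous $v\in\FB_Q$ with $v_a\equiv 0$, using compactness and closure of this class under tangent maps. More seriously, the residual case you yourself flag as the main obstacle is never treated. This is a $\beta$-homogeneous $v$ on $\R^m$, $m\geq 3$, whose $Q$-touching set is $\{0\}$ but which still has density $Q$ classical singularities along codimension-one cones.

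\textbf{Step 3.} Theorem~\ref{thm:App-1} allows \emph{non-immersed} classical singularities. Across these there is no canonical way to continue a sheet from one arc of $S^1$ to the next, so your ``monodromy'' is not defined. The exclusion of $\beta=k'/k\in(1,3/2)$ is also only asserted (``should be'', ``if it turns out'').

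\textbf{What is missing.}
\begin{enumerate}
\item[(i)] Fix a component $\Sigma$ of $\reg(v)\cap S^{n-1}$. All sheets vanish on $\partial\Sigma$, since $v_a\equiv0$. By \cite[Theorem 3.12]{MW24} the only classical singularities and touching points of $v$ have density $Q$, so the sheets are ordered and cannot cross inside $\Sigma$. Hence $v^Q-v^1>0$ and each $v^j-v^1\geq 0$ are Dirichlet eigenfunctions with eigenvalue $\beta(\beta+n-2)$. It follows that $v|_\Sigma=\sum_i\llbracket a_i(\Sigma)\phi_\Sigma\rrbracket$, where $\phi_\Sigma$ is the positive first eigenfunction. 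This, not monodromy, is where $(\S3)_Q$ enters.
\item[(ii)] The balancing condition of Lemma~\ref{lem: Balancing Condition}: $|Dv|$ is continuous across classical singularities. This is a second-order consequence of stationarity, obtained from the squeeze identity, and nothing in your proposal supplies it.
\end{enumerate}

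\textbf{The two-dimensional case with (i) and (ii).} By (i), every arc where $v\neq0$ has length exactly $\pi/\beta$. By (ii) (or $C^1$ at touching points), the endpoint of such an arc cannot border a sector on which $v\equiv0$, since each sheet would then have a double zero there. Hence $\beta=N/2$ for the number $N$ of arcs, and $\beta>1$ gives $\beta\geq 3/2$. If $v$ never vanishes on $S^1$, the sheets are $2\pi$-periodic and $\beta\in\mathbb{Z}$. Without (ii) the argument breaks: two arcs of length $\pi/\beta$ separated by sectors where $v\equiv0$ are excluded neither by (i) nor by $v_a\equiv0$, for any $\beta\in(1,3/2)$.

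\textbf{The paper's route and how to repair yours.} The paper uses (i) and (ii) to show that $f=\llbracket\tilde p\rrbracket+\llbracket-\tilde p\rrbracket$ is a non-zero symmetric two-valued $C^{1,\alpha}$ harmonic function. Here $\tilde p$ is the $\beta$-homogeneous extension of $|a(\Sigma)|\phi_\Sigma$. It then cites \cite[Lemma 4.1]{SW16}, so no dimension reduction for $Q$-valued blow-ups is needed. If you keep your route, the same construction closes your residual case. For $m\geq 3$ with no touching points on $S^{m-1}$, $f$ splits globally as $\pm h$ on the simply connected set $\R^m\setminus\{0\}$. The function $h$ is $C^1$, hence harmonic, so $\beta\in\mathbb{Z}$ and $\beta\geq 2$.
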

A direct consequence of Theorem \ref{thm:App-1} is that one can take $\alpha=1/2$ in \cite[Theorem A]{MW24}, which is optimal by the examples seen in \cite{SW07}. We note that this is better regularity than one might initially expect, as in general one expects the best possible regularity for a $Q$-valued $C^{1,\alpha}$ function whose graph is stationary to be $C^{1,1/Q}$.

\begin{proof}
By translating, rotating, and \cite[Theorem A]{MW24}, we may assume without loss of generality that $Z=0\in \CK^Q_V$ and that $V$ is represented by the graph of a generalised-$C^{1,\alpha}$ function $u:B^n_1(0)\to \A_Q(\R)$ with $u(0) = Q\llbracket 0\rrbracket$ and $Du(0) = Q\llbracket 0\rrbracket$. Then by the results in \cite[Section 3]{KMW}, it suffices to show that any (necessarily homogeneous and non-zero) tangent map $v$ to $u$ at $0$ has degree of homogeneity $\beta\geq 3/2$.

From \cite[Corollary 3.4]{KMW}, we know that $\beta\geq 1+\alpha$, where $\alpha = \alpha(n,Q)\in (0,1)$ is as in \cite[Theorem A]{MW24}. In particular, since the average $v_{a}$ is always harmonic (see \cite[$(\FB3)$]{MW24}), if $v_{a}\not\equiv 0$ then $v_a$ is a $\beta$-homogeneous harmonic function, and so $\beta\geq 2$; this completes the proof in this case. Thus, we may assume that $v_a\equiv 0$.

Write $\reg(v)$ for the regular part of $v$, which in the present situation is simply the set of $x$ for which $v^i(x) \neq v^j(x)$ for some $i\neq j$. Arguing as in Proposition \ref{prop:relations} we get that, for each connected component $\Sigma$ of $\reg(v)\cap S^{n-1}$, if $\phi_\Sigma$ is the (unique positive) Dirichlet eigenfunction of $\Delta_{S^{n-1}}$ on $\Sigma$, then there is a constant $a(\Sigma) = (a_1(\Sigma),\dotsc,a_Q(\Sigma))\in \R^Q$ such that
$$v|_\Sigma = \sum^Q_{i=1}\llbracket a_i(\Sigma)\phi_\Sigma\rrbracket.$$
Define now $p:S^{n-1}\to \R$ by
$$p(x) := \begin{cases}
|a(\Sigma)|\phi_\Sigma(x) & \text{if }x\in \Sigma\text{ for each connected component $\Sigma$ of $\reg(v)\cap S^{n-1}$;}\\
0 & \text{otherwise.}
\end{cases}$$
Let $\tilde{p}$ denote the $\beta$-homogeneous extension of $p$ to $\R^n$, and set $f:= \llbracket \tilde{p}\rrbracket + \llbracket -\tilde{p}\rrbracket$. Then, the regularity estimates for general blow-ups in $\S_Q$ obtained in \cite[Theorem 3.12]{MW24} together with Lemma \ref{lem: Balancing Condition} imply that $f$ is a (symmetric) two-valued $C^{1,\alpha}$ harmonic function which is $\beta$-homogeneous. As $v$ is non-vanishing, so is $f$. Thus, \cite[Lemma 4.1]{SW16} implies that $\beta\geq 3/2$, completing the proof.
\end{proof}

We now prove a generalisation of Lemma \ref{lemma:decomposition} to $\S_Q$ when one assumes all (density $Q$) classical singularities of the varifold are immersed.

\begin{theorem}\label{thm:App-2}
Suppose $V\in \S_Q$ is such that:
\begin{enumerate}
\item [\textnormal{(i)}] all density $Q$ classical singularities of $V$ are immersed;
\item [\textnormal{(ii)}] there is a generalised-$C^{1,\alpha}$ function $u:B^n_{1/2}(0)\to \A_Q(\R)$ with $V\res (\R\times B^n_{1/2}(0)) = \mathbf{v}(u)$.
\end{enumerate}
Then exactly one of the following two conclusions must hold: either
\begin{enumerate}
\item [\textnormal{(a)}] $V\in \S_Q^\pm$ (in which case the conclusions of Lemma \ref{lemma:decomposition} hold); or
\item [\textnormal{(b)}] there exist integers $q_1\geq 1$, $q_2\geq 0$, with $2q_1+q_2 = Q$ and:
\begin{itemize}[leftmargin=1.5em]
\item $C^{1,1/2}$ two-valued functions $w_1,\dotsc,w_{q_1}:B^n_{1/2}(0)\to \A_2(\R)$ whose graphs are {stationary,} each of which cannot be written globally as the sum of two $C^1$ single-valued functions;
\item a smooth (single-valued) function $w_*:B^n_{1/2}(0)\to \R$ solving the minimal surface \mbox{equation;}
\end{itemize}
such that
\begin{equation*}
u = q_2\llbracket w_*\rrbracket + \sum^{q_1}_{i=1}\llbracket w_i\rrbracket.
\end{equation*}
Furthermore, the branch sets of each $w_i$ must coincide.
\end{enumerate}
\end{theorem}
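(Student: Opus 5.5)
The plan is to exploit the local structure along immersed classical singularities and then analyse the global monodromy of the ordering. Following the proof of Lemma \ref{lemma:decomposition}, write $u=\sum_i\llbracket\widetilde u_i\rrbracket$ with the increasing ordering, and decompose $B^n_{1/2}(0)=\mathcal R\cup\mathcal C\cup\mathcal K$. Near any $x\in\mathcal C$ (all immersed, by (i)), the Hopf boundary argument shows that crossing $\Gamma$ reverses the ordering: $\widetilde u^+_{Q-j+1}$ continues $C^1$ across $\Gamma$ to $\widetilde u^-_j$. Thus on $\mathcal R\cup\mathcal C$ the sheets are locally $C^1$ continuations of one another, and following a path in $B^n_{1/2}(0)\setminus\mathcal K$ induces a permutation of $\{1,\dots,Q\}$. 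Each crossing of $\mathcal C$ applies the order-reversing involution $\tau(j)=Q-j+1$, and crossings within $\mathcal R$ apply the identity. Hence the monodromy group, i.e.\ the image of $\pi_1(B^n_{1/2}(0)\setminus(\mathcal K\cup\text{nonmanifold part}))$, lies in $\{\mathrm{id},\tau\}$. It is $\{\mathrm{id}\}$ exactly when the parity of crossings of $\mathcal C$ is path-independent, i.e.\ when the components of $\mathcal R$ admit a bipartite colouring $s$ with opposite signs across each point of $\mathcal C$.

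Case (a): suppose the monodromy is trivial. Define $s$ by this two-colouring and transport the upward normal accordingly. The unit normal to each graph is then continuous on components of $\reg(V)\cap\{\Theta_V<Q\}$, and it flips across each immersed classical singularity. This is precisely the alternating condition in Definition \ref{defn:orientation} (cf.\ remark (b) after it), so $V\in\S_Q^\pm$ and Lemma \ref{lemma:decomposition} applies.

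Case (b): suppose the monodromy is $\{\mathrm{id},\tau\}$. Then the orbits of $\tau$ split the sheets into $q_1=\lfloor Q/2\rfloor$ pairs $\{j,Q-j+1\}$, together with a middle sheet $j=(Q+1)/2$ when $Q$ is odd. A sheet fixed by $\tau$ is globally single-valued and $C^{1,\alpha}$, so Lemma \ref{lemma:decomposition}'s extension argument across $\mathcal C$ shows it solves the MSE off $\mathcal K$. For a pair $j\neq Q-j+1$, set $w_j:=\llbracket\widetilde u_j\rrbracket+\llbracket\widetilde u_{Q-j+1}\rrbracket$ (with the $\tau$-coherent labelling). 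This is a globally well-defined two-valued $C^{1,\alpha}$ function, stationary off $\mathcal K$ by the same divergence argument, and it cannot split into two $C^1$ functions, since the monodromy is nontrivial. Pairs that happen to coincide identically should be absorbed into $w_*$, which yields the general count $2q_1+q_2=Q$ and requires showing that every non-split piece is identical to every other away from the pairs.

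The branch set of every pair is the same: it is the non-removable part of $\mathcal K$ where the $\tau$-monodromy lives. Any loop around a branch point swaps all pairs simultaneously, because the same permutation $\tau$ acts on all sheets. Consequently the branch sets of the $w_i$ coincide. Finally, the single-valued sheets $w_*$ must all coincide and be minimal. At points of $\mathcal K$ all sheets agree to first order, so $w_*$ touches each pair there. Stationarity of each piece across $\mathcal K$ should then follow by using the planar frequency bound of Theorem \ref{thm:App-1}, which gives $C^{1,1/2}$, together with the Simon--Wickramasekera theory \cite{SW16} for two-valued $C^{1,\alpha}$ stationary graphs. The $C^{1,1/2}$ regularity of each $w_i$ also comes from Theorem \ref{thm:App-1}.

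The main obstacle is the step of showing that each individual piece, namely each $w_i$ and $w_*$ rather than just their sum, is stationary across $\mathcal K$, without a priori control on the size of $\mathcal K$. I would first try to adapt the argument of Lemma \ref{lemma:regularity} and the proof of Theorem \ref{thm:eps-reg}. I would apply it to the difference $f=\widetilde u_j+\widetilde u_{Q-j+1}-2w_*$, or to suitable symmetric combinations of the pieces, to obtain a divergence-form equation with Lipschitz-type coefficients. Garofalo--Lin frequency arguments would then give $\dim_\H(\mathcal K)\le n-2$, modulo the branch set, which is handled by the two-valued theory. The second delicate point is proving that the single-valued pieces all coincide, since a priori distinct $\tau$-fixed sheets cannot exist, but coincident swapped pairs must be recognised as contributing to $q_2$.
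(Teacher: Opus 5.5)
Your combinatorial skeleton is sound and is essentially a repackaging of the paper's argument, granted the analytic input discussed below. Your pairs $\llbracket\widetilde u_j\rrbracket+\llbracket\widetilde u_{Q-j+1}\rrbracket$ are exactly the paper's $\widehat w_j$. Identifying ``alternating'' with triviality of the $\mathbb{Z}_2$-class dual to $\mathcal C$ in $B^n_{1/2}(0)\setminus\mathcal K$ replaces the paper's contrapositive (no branch points $\Rightarrow$ global minimal decomposition $\Rightarrow$ orient by $\mathrm{sign}(u_i-u_j)\nu$). It also gives coincidence of the branch sets without appealing to Theorem~\ref{thm:eps-reg}. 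Your ``second delicate point'' is easy as well: by the ordering, $\widetilde u_j\equiv\widetilde u_{Q-j+1}$ forces every intermediate sheet to coincide, so the single-valued pieces are automatically one function.

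The gap is the analytic core, which you flag but do not close. Case (b) needs $\mathcal K$ to be small for two reasons. First, to extend minimality of each $w_i$ and of $w_*$ across $\mathcal K$. Second, implicitly, to make $B^n_{1/2}(0)\setminus\mathcal K$ connected, so that unique continuation makes ``this pair coincides'' or ``this pair does not coincide'' a global statement. Without that, $q_1$, $q_2$, non-splittability of the $w_i$, and the single function $w_*$ are not well defined. The paper supplies this by quoting \cite[Theorem A]{KMW}, which gives $\dim_\H(\mathcal K)\le n-2$ for every $V\in\S_Q$ in this graphical setting, with no alternating hypothesis. After that, minimality of each piece extends across $\mathcal K$ and the rest goes through.

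Your proposed substitute does not work, for three reasons.

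\begin{enumerate}
\item[(1)] The $C^{1,1}$ bound of Lemma~\ref{lemma:regularity} rests on Lemma~\ref{lemma:lower-bound}: tangent maps are $Q$ single-valued harmonic functions, hence have homogeneity $\ge 2$. That lemma in turn uses the single-valued $C^{1,\alpha}$ decomposition, which comes from the alternating condition. In case (b), tangent maps at branch points can have homogeneity exactly $3/2$; Theorem~\ref{thm:App-1} is sharp by \cite{SW07}. So you only get $C^{1,1/2}$, the coefficient matrix is not Lipschitz, and Garofalo--Lin is unavailable.
\item[(2)] Because the minimal surface equation is nonlinear, $f=\widetilde u_j+\widetilde u_{Q-j+1}-2w_*$ does not satisfy a homogeneous divergence-form equation. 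Instead, $\div(\bar A\,Df)$ equals the divergence of a term quadratic in $D(\widetilde u_j-\widetilde u_{Q-j+1})$, which $f$ does not control. Moreover, $f$ can vanish identically, e.g.\ in the linearised model $\llbracket\mathrm{Re}(z^{3/2})\rrbracket+\llbracket-\mathrm{Re}(z^{3/2})\rrbracket$ with $w_*\equiv 0$. Also, $w_*$ need not exist when $q_2=0$.
\item[(3)] \cite{SW16} applies to two-valued graphs already known to be stationary, which is exactly what you are trying to prove. Invoking it here is circular; the paper says so in the remark following the theorem.
\end{enumerate}

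So ``modulo the branch set'' is not a side issue. The branch set sits inside $\mathcal K$, and removing it needs an input like \cite[Theorem A]{KMW}.
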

Of course, \cite[Theorem A]{MW24} ensures that the graphical assumption in Theorem \ref{thm:App-2}(ii) is guaranteed in a neighbourhood of any density $Q$ flat singular point of $V\in \S_Q$. Thus, an immediate consequence of Theorem \ref{thm:App-2} is an improvement on Theorem \ref{thm:eps-reg} to $V\in \S_Q$ with all classical singularities being immersed, except now parts of the graph may be genuinely two-valued.

\begin{proof}
It suffices to prove that if $V\not\in \S_Q^\pm$, i.e.~that $V$ fails the alternating condition, then conclusion (b) holds.

Let $\widetilde{u}_1\leq\cdots\leq \widetilde{u}_Q$ be Lipschitz functions with $u = \sum^Q_{i=1}\llbracket \widetilde{u}_i\rrbracket$. We may assume $\widetilde{u}_i\not\equiv \widetilde{u}_j$ for some $i\neq j$, else $V$ coincides with a single graph with multiplicity $Q$ and so $V$ would be alternating, a contradiction to our assumption. For $j\in\{1,\dotsc,\lfloor Q/2\rfloor\}$, let $\widehat{w}_j:B^n_{1/2}(0)\to \A_2(\R)$ be the two-valued function defined by
$$\widehat{w}_j:= \llbracket \widetilde{u}_j\rrbracket + \llbracket\widetilde{u}_{Q-j+1}\rrbracket.$$
If additionally $Q$ is odd, then define $\widehat{w}_0:=\widetilde{u}_{(Q+1)/2}$. Let $\mathcal{K}$ denote the (density $Q$) critical set of $u$. Then using the Hopf boundary point lemma (and unique continuation for minimal graphs, in the case where multiplicity may occur), one can argue analogously to the proof of Lemma \ref{lemma:decomposition} to see that, away from $\mathcal{K}$, each $\widehat{w}_j$ is a $C^{1,\alpha}$ two-valued minimal graph and, when $Q$ is odd, $\widehat{w}_0$ is a (single-valued) $C^{1,\alpha}$ minimal graph. Furthermore, by \cite[Theorem A]{KMW} we know that $\dim_\H(\mathcal{K})\leq n-2$, and thus the minimality of each function can be extended across $\mathcal{K}$. We stress that some $\widehat{w}_j$ could decompose globally as two single-valued minimal graphs.

We claim that $V$ must contain a branch point. Indeed, if $V$ were free of branch points, then $u$ would decompose globally as a sum of $Q$ single-valued minimal graphs, say $u_1,\dotsc,u_Q$. Let $i,j$ be indices for which $u_i\not\equiv u_j$ (such indices must exist, as otherwise $u = Q\llbracket u_1\rrbracket$ and so $V$ would obey the alternating condition). Let $v:= u_i-u_j$. Then $v\not\equiv 0$ is analytic, and moreover as $\{v=0\} = \CC_u\cup\CK$, we have $|v|>0$ on $\reg(u)$. Notice that $v$ must change sign across any two components of $\reg(u)$ whose boundaries contain a common point in $\CC_u$; indeed, $v$ solves a divergence-form elliptic equation with analytic coefficients, and so the Hopf boundary point lemma applies. Thus, as we are in codimension one, we can define an orientation on $\reg(V)$ via a choice of unit normal on $\reg(V)$, and we can simply choose $\text{sign}(v)\nu$, where $\nu$ is (say) the upward-pointing unit normal to the graph of $u$. By construction, $V$ would therefore be alternating, a contradiction.

Thus, $V$ contains a branch point. Consequently, one of the two-valued functions $\widehat{w}_i$ must be genuinely two-valued. We therefore now know that there exist integers $q_1\geq 1$ and $q_2\geq 0$ for which there are $C^{1,1/2}$ two-valued functions $w_1,\dotsc,w_{q_1}$ with stationary graphs (the regularity coming from Theorem \ref{thm:App-1}) and which cannot be written globally as the sum of two $C^1$ single-valued functions, as well as (smooth) single-valued minimal graphs $v_1,\dotsc,v_{q_2}$ for which
$$u = \sum^{q_1}_{i=1}\llbracket w_i\rrbracket + \sum^{q_2}_{i=1}\llbracket v_i\rrbracket.$$
Clearly we need $2q_1+q_2=Q$. We now claim that $v_i\equiv v_j$ for all $i\neq j$. Of course if $q_2=0$ or $q_2=1$ then we are done, so let us suppose $q_2\geq 2$. But then notice that our argument showing that $V$ must contain a branch point also applies to show that $v_i\equiv v_j$ for all $i,j$ (else consider signs of $v:=v_i-v_j$), and thus if we write $w_* := v_1$ we have $v_i \equiv w_*$ for all $i\in\{1,\dotsc,q_2\}$.

Finally, notice that the branch sets of each $w_i$ must coincide, as otherwise locally about a branch point of, say, $w_1$, which is not a branch point of, say, $w_2$, writing locally $w_2$ as the sum of two \emph{distinct} minimal graphs, one could run the above argument with these two minimal graphs to see that $V$ is alternating in this region, contradicting $w_1$ being branched (by Theorem \ref{thm:eps-reg}).
\end{proof}

\begin{remark}
    We make a few remarks concerning Theorem \ref{thm:App-2}.
\begin{enumerate}
\item [(1)] We used \cite{KMW} to prove the dimension bound $\dim_\H(\mathcal{K})\leq n-2$ in the proof of Theorem \ref{thm:App-2}. It seems plausible that one might be able to instead prove this bound in a more elementary manner using arguments similar to those seen in \cite{SW16}. The dimension bound in \cite{SW16} does not appear to  be directly applicable as one does not know stationarity of the individual two-valued functions in the proof at that stage (recall the key point in the proof of Theorem \ref{thm:eps-reg} was that the dimension bound could be obtained via elementary arguments).

\item [(2)] The proof of Theorem \ref{thm:App-2} also shows that, in the case that $V\in \S_Q$ has all classical singularities immersed, having no (density $Q$) branch points is equivalent to the alternating condition holding (in the region $\{\Theta_V<Q+1/2\}$). Thus, a density $Q$ flat singular point $X$ of $V$ is a branch point if and only if either the alternating condition fails in a neighbourhood of $X$, or $X$ is a limit of non-immersed classical singularities.

\item [(3)] In case (b) of Theorem \ref{thm:App-2}, one also expects relations between the functions $w_i$ and $w_*$, enforced by the fact that all classical singularities have density $Q$. For instance, for any $1\leq i\leq q_1$, if one writes $w_i = \llbracket w_i^{(1)}\rrbracket + \llbracket w_i^{(2)}\rrbracket$ where $w_i^{(1)}\leq w_i^{(2)}$, then $w_i^{(1)}\leq w_*\leq w^{(2)}_i$. Indeed, if this were to fail, then one could consider $\widetilde{w}:=\mb{w_i^{(1)}-w_*}+\mb{w_i^{(2)}-w_*}$ and note that both components of $\widetilde{w}$ have the same constant sign on each connected component of $\reg(u)$. Furthermore, the Hopf boundary point lemma implies that this sign must flip across classical singularities, and we could thus argue analogously to the proof of Theorem \ref{thm:App-2} and define an orientation on $\reg(V)$ according to this sign, which would contradict the assumption that $V$ is not alternating.

\item [(4)] A consequence of Theorem \ref{thm:App-2} is that if $u\in C^{1}(B^n_{1}(0);\A_Q(\R))$ has a stationary graph and all classical singularities having density $Q$, then in fact $u\in C^{1,1/2}(B^n_{1/2}(0);\A_Q(\R))$ and $u$ admits a decomposition as in one of the conclusions of Theorem \ref{thm:App-2}. Indeed, this follows because $\mathbf{v}(u)\in \S_Q$ and all (density $Q$) classical singularities of $\mathbf{v}(u)$ are immersed.  
\end{enumerate}
\end{remark}
Combining Theorem \ref{thm:App-2} with \cite[Theorem A]{KW21} gives an asymptotic expansion for $V\in \S_Q$ at $\H^{n-2}$-a.e.~density $Q$ immersed branch point of $V$. Furthermore, combining Theorem \ref{thm:eps-reg} and Theorem \ref{thm:App-2} with \cite[Theorem C]{KW21} gives the following structure for the set of density $Q$ immersed branch points of $V$:
\begin{corollary}\label{cor:App-4}
Let $V\in\S_Q$ be $n$-dimensional. Suppose all density $Q$ classical singularities of $V$ are immersed. If $n=2$, then the density $Q$ flat singular points of $V$ are isolated. If $n\geq 3$, then for all $\Omega\Subset B^{n+1}_2(0)$, the set $\CK^Q_V\cap\Omega$ is either empty, or has positive $\H^{n-2}$-measure, and decomposes as a finite disjoint union of locally compact sets, each of which is locally $(n-2)$-rectifiable.
\end{corollary}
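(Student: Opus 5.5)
The plan is to localise: cover $\Omega$ by finitely many small balls about points $Z\in\CK^Q_V\cap\overline{\Omega}$. In each such ball, \cite[Theorem A]{MW24} (after translating and rotating so that the tangent plane at $Z$ is horizontal) represents $V$ as the graph of a generalised-$C^{1,\alpha}$ function $u:B^n_{1/2}(0)\to\A_Q(\R)$. Since all density $Q$ classical singularities are immersed, Theorem \ref{thm:App-2} applies on this ball, and I split into its two alternatives.

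In alternative (a), $V\in\S_Q^\pm$ locally, and Theorem \ref{thm:eps-reg} gives $V=\sum_i\mathbf{v}(u_i)$ with the $u_i$ smooth minimal graphs. So $\CK^Q_V$ is contained in the set $\{u_i=u_j,\ Du_i=Du_j \text{ for all } i,j\}$. For $f=u_i-u_j\not\equiv 0$, $f$ solves a linear elliptic equation with analytic coefficients, and this set is contained in $\{f=0,Df=0\}$. I then intend to invoke the standard structure theory for critical zero sets of solutions to such equations (Hardt--Simon type results, or \cite[Theorem C]{KW21} applied to this trivial case). This gives local $(n-2)$-rectifiability and a decomposition into finitely many locally compact pieces. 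When $n=2$, analyticity and unique continuation should make the set discrete. Positivity of $\H^{n-2}$-measure needs a separate argument: at a point of $\CK$ the tangent function of $f$ is a homogeneous harmonic polynomial $h$ of degree $\geq 2$. If $\H^{n-2}(\CK)=0$ near the point, I would rule this out by showing that the critical zero set of $h$ then has codimension $\geq 3$, which contradicts a nodal-domain/sign-changing argument. Only if this fails would I fall back on upper semicontinuity of frequency from \cite{KMW}. Isolated points in alternative (a) for $n\ge3$ are exactly what must be excluded, and I expect this to be the delicate point.

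In alternative (b), $u=q_2\llbracket w_*\rrbracket+\sum_i\llbracket w_i\rrbracket$, where the $w_i$ are $C^{1,1/2}$ two-valued functions with stationary graphs and coinciding branch sets. The density $Q$ flat points are then the points where all $w_i$ and $w_*$ coincide to first order. Each $w_i$ is a two-valued stationary $C^{1,\alpha}$ graph, so \cite[Theorem C]{KW21} applies to it directly. It yields that the branch set of $w_i$ is either empty or of positive $\H^{n-2}$-measure, a finite disjoint union of locally compact, locally $(n-2)$-rectifiable sets, and isolated when $n=2$. Because the branch sets of the $w_i$ coincide, the branch part of $\CK^Q_V$ is contained in this set. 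The non-branch part is handled by writing the $w_i$ locally as single-valued minimal graphs and applying the case (a) argument. To conclude, I would intersect with the condition that $w_*$ and all sheets touch to first order, which is a relatively closed condition and therefore preserves local compactness and rectifiability.

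The main obstacle is the dichotomy ``empty or positive $\H^{n-2}$-measure''. Intersecting the coincidence sets of several functions could a priori cut a rectifiable set of positive measure down to a null or even isolated set. I will try to handle this by using Proposition \ref{prop:relations} and the frequency lower bounds (Lemma \ref{lemma:lower-bound}, Theorem \ref{thm:App-1}). These show that near any point all differences are controlled by one or two harmonic functions, so the coincidence set is locally the critical zero set of a single function, to which the \cite{KW21} dichotomy applies. Finite subcovers of $\overline\Omega$ then give the finite decomposition.
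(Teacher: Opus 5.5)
There is a genuine gap, and it is the point you flagged as delicate: the dichotomy ``empty or positive $\H^{n-2}$-measure'' for the \emph{non-branch} density $Q$ flat points. These are all of $\CK^Q_V$ in alternative (a), and the non-branch part in (b).

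The nodal-domain contradiction you propose does not exist. For $n\geq 3$ a critical zero set can be a single point: $h(x)=x_1^2+x_2^2-2x_3^2$ is harmonic on $\R^3$ with $\{h=0,\,Dh=0\}=\{0\}$. This happens inside the class. Let $u_2\equiv 0$, and let $u_1$ be a minimal graph over a ball in $\R^3$ with $u_1(0)=0$ and a nondegenerate critical point at $0$ (e.g.\ a small perturbation of $\eps h$). Then $V=\mathbf{v}(u_1)+\mathbf{v}(u_2)$ is stationary with stable regular part. Its classical singularities are transverse density $2$ crossings along the double cone $\{u_1=0\}\setminus\{0\}$. Orienting both sheets by $\mathrm{sign}(u_1)$ times the upward normal is alternating. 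So $V\in\S^\pm_2$, yet near the origin its set of density $2$ flat singular points is $\{0\}$. Hence no argument can give the dichotomy for all density $Q$ flat singular points.

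Your fallback does not help either. Proposition \ref{prop:relations} concerns tangent maps only; its analogue for the actual sheets is the open Question 2. The dichotomy in \cite[Theorem C]{KW21} is about branch sets of genuinely two-valued functions, not critical zero sets of a single function, where the example shows it fails.

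The corollary must be read, as the sentence introducing it in the paper says, as a statement about density $Q$ \emph{branch} points. For those, the paper's argument is much shorter than yours.
\begin{itemize}
\item In alternative (a) there are no branch points at all, by Theorem \ref{thm:eps-reg}.
\item In alternative (b), a branch point of $V$ must be a branch point of some $w_i$. Conversely, branch points of the $w_i$ have density $Q$, since flat points of density $<Q$ are regular by \cite[Theorem B]{MW24}. So the density $Q$ branch set of $V$ is the common branch set of $w_1,\dotsc,w_{q_1}$.
\item Applying \cite[Theorem C]{KW21} to the single two-valued stationary $C^{1,1/2}$ graph $w_1$ gives isolatedness for $n=2$. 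For $n\geq 3$ it gives the dichotomy together with the finite decomposition into locally compact, locally $(n-2)$-rectifiable sets.
\end{itemize}
The coincidence of branch sets in Theorem \ref{thm:App-2} is exactly what removes the need to intersect coincidence sets, which is the obstacle you identified. Your treatment of the branch part of (b) already follows this route, so drop the non-branch analysis and restrict the claim to branch points.
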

Recently, \cite{SSS25} and \cite{KMW} have independently established that flat singular points of $2$-dimensional area minimising hypersurfaces mod $p$ are isolated.

We conclude this appendix by showing a certain `balancing' condition holds for coarse blow-ups $v\in\mathfrak{B}_Q$ of varifolds in $\S_Q$ (notation as in \cite{MW24}). 
This was used in the proof of Theorem \ref{thm:App-1}.

\begin{lem}\label{lem: Balancing Condition}
Suppose $v\in\FB_Q$. Then, $|Dv|:B^n_1(0)\to \R$ is continuous, where $|Dv|^2 = \sum_{i=1}^Q|Dv^i|^2.$
\end{lem}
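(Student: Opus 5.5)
We need to show that $|Dv|$ is continuous on $B^n_1(0)$ for a coarse blow-up $v\in\FB_Q$. The plan is to split $B^n_1(0)$ into the regions where the structure of $v$ is known from \cite[Theorem 3.12]{MW24} and check continuity in each.

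On the regular part of $v$, i.e.~away from points where $v(x)=Q\llbracket a\rrbracket$, \cite[Theorem 3.12]{MW24} gives that $v$ is locally a sum of smooth harmonic functions. There $|Dv|$ is trivially continuous. At points where $v$ is $C^{1,\alpha}$ as a multi-valued function (including touching and branch points of density $Q$), $|Dv|^2=\sum_i|Dv^i|^2$ is continuous. The reason is that it is a symmetric function of the unordered $Q$-tuple of gradients, and the map $x\mapsto \sum_i\llbracket Dv^i(x)\rrbracket$ is continuous into $\A_Q(\R^n)$.

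The real issue is at classical singularities of $v$. Near such a point $v$ is described on two sides $\Omega_\pm$ of an $(n-1)$-dimensional $C^{1,\alpha}$ submanifold $\Gamma$ by $C^{1,\alpha}$ harmonic sheets $v^i_\pm$ up to $\Gamma$. These all agree in value on $\Gamma$ (the common value $a$), and their tangential derivatives along $\Gamma$ agree. The task is to show that
$$\sum_i|D_{\nu_\Gamma}v^i_+|^2=\sum_i|D_{\nu_\Gamma}v^i_-|^2 \quad\text{on }\Gamma.$$
This is a balancing condition, and I would derive it from the squeeze/domain variation identity inherited by coarse blow-ups. Stationarity of the $V_j$ passes to the limit as a domain-variation identity. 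Specifically, \cite[$(\FB2)$]{MW24} gives
$$\int\sum_i\bigl(|Dv^i|^2\div\zeta-2Dv^i\cdot D\zeta\,Dv^i\bigr)=0\qquad\text{for all }\zeta\in C^1_c(B^n_1(0);\R^n).$$
The hardest step is checking this identity is indeed available in $\FB_Q$ in the form needed, using the property list of \cite{MW24}. If it is not, I would pass to the limit in the first variation of $V_j$ directly, using the $C^{1,\alpha}$ estimates away from the density $Q$ flat set.

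Now localise $\zeta$ near a point of $\Gamma$ and integrate by parts separately on $\Omega_+$ and $\Omega_-$. Since each $v^i_\pm$ is harmonic, the interior terms vanish because the energy-momentum tensor is divergence free, leaving boundary terms on $\Gamma$. Using equality of tangential derivatives, the tangential parts cancel. The remaining integrand is
$$\zeta\cdot\nu_\Gamma\Bigl(\sum_i|D_\nu v^i_+|^2-\sum_i|D_\nu v^i_-|^2\Bigr),$$
with the sign coming from $|Dv|^2\nu-2(Dv\cdot\nu)Dv$. Since $\zeta$ is arbitrary, this gives the jump condition. Combining it with the tangential agreement, $|Dv|^2$ has equal one-sided limits at $\Gamma$ and so is continuous there.

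Finally, the remaining points (the density $Q$ flat points of $v$, where the gradients all coincide) are handled by the $C^{1,\alpha}$ estimate at such points. At such a point, $Dv^i(x)=b$ for all $i$, and nearby values of all sheets' gradients are close to $b$. Hence $|Dv|^2\to Q|b|^2$, which gives continuity everywhere.
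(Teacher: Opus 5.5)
Your argument is correct and is essentially the paper's. Both reduce, via \cite[Theorem 3.12]{MW24}, to comparing one-sided normal derivatives at a classical singularity, and both extract $\sum_i|D_\nu v^i_+|^2=\sum_i|D_\nu v^i_-|^2$ from the squeeze identity, which is indeed available as \cite[Lemma 3.7]{MW24}. The paper first passes to a tangent map at $z$ (via $(\FB5)$, $(\FB6)$) so the interface is flat and the sheets are linear. Your direct integration by parts on the curved interface also works, using the $\eps$-neighbourhood approximation and the fact that the energy--momentum tensor is divergence free for harmonic sheets.

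One small slip: equal tangential derivatives do not by themselves remove the cross term $-2(\zeta\cdot\tau)\bigl(\sum_i D_\nu v^i_+-\sum_i D_\nu v^i_-\bigr)$, where $\tau$ is the common tangential gradient on $\Gamma$. It disappears if you take $\zeta$ normal to $\Gamma$ along $\Gamma$, as the paper does by setting $\zeta^j\equiv 0$ for $j>1$, or if you use that the average of $v$ is harmonic.
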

\begin{proof}
Fix $v\in\mathfrak{B}_Q$. By \cite[Theorem 3.12]{MW24}, $|Dv|$ is clearly continuous away from $\CC_v$, the set of classical singularities of $v$, so it suffices to check the continuity along $\CC_v$. Fix $z\in \CC_v$. As the derivatives of $v$ in directions parallel to the classical singularities agree, it suffices to prove that the norm of the (one-sided) normal derivatives agree on either side of the classical singularity at $z$. 

By taking a tangent map to $v$ at $z$ and using \cite[Theorem 3.12, $(\mathfrak{B}6)$, $(\mathfrak{B}5)$]{MW24}, it suffices to prove this in the case where $v\in\mathfrak{B}_Q$ takes the form
$$v(x) = \begin{cases}
    \sum^Q_{i=1}\llbracket A_i x^1\rrbracket & \text{if }x^1\geq 0;\\
    \sum^Q_{i=1}\llbracket B_i x^1\rrbracket & \text{if }x^1<0;
\end{cases}$$
for some constants $A_i,B_j\in \R$. We are therefore reduced to showing $|A| = |B|$, i.e. $\sum_{i=1}^Q |A_i|^2 = \sum_{i=1}^Q|B_i|^2$. By \cite[Lemma 3.7]{MW24}, we know that $v$ satisfies the squeeze identity, and so
$$\int\sum^Q_{\alpha=1}\sum^n_{i,j=1}\big(|Dv^\alpha|^2\delta_{ij} - 2D_iv^\alpha\cdot Dv_j^\alpha\big)D_i\zeta^j = 0 \qquad \text{for all }\zeta\in C^\infty_c(\R^n;\R^n).$$
Using the specific form of $v$, this gives
$$|A|^2\int_{\{x^1>0\}}(\div(\zeta)-2D_1\zeta^1) + |B|^2\int_{\{x^1<0\}}(\div(\zeta)-2D_1\zeta^1) = 0.$$
Choose $\zeta^j\equiv 0$ for all $j>1$ and then integrate by parts to get $(|A|^2-|B|^2)\int_{\{x^1=0\}}\zeta^1 = 0$  for all $\zeta^1\in C^\infty_c(\R^n;\R)$.
This implies $|A|=|B|$, which completes the proof.
\end{proof}

\section{A Note on the Non-Immersed Case}\label{app:B}

In this appendix we detail a generalisation of the results in the paper which allows $V\in \S_Q$ to have certain non-immersed classical singularities. The generalisation here is, instead of assuming all density $Q$ classical singularities of $V$ are immersed, we only assume that locally about each density $Q$ classical singularity of $V$, the \emph{support} $\spt\|V\|$ is smoothly immersed. Equivalently, if $\BC$ is a tangent cone at a density $Q$ classical singularity of $V$, then $\spt\|\BC\| = \cup_i P_i$, for (distinct) hyperplanes $P_i$.

Notice that this is a strictly weaker assumption than assuming all (density $Q$) classical singularities in $V$ are immersed, as it is possible for the cone $\BC$ to not be a sum of hyperplanes (as a varifold), whilst its support $\spt\|\BC\|$ is a union of hyperplanes. For instance, let $\BC = \BC_1+2\BC_2$, where $\BC_1$ is the standard triple junction in $\R^2$, and $\BC_2:= \Gamma_\#\BC_1$, where $\Gamma\in O(2)$ is the rotation by angle $\pi$. Notice that $\BC$ is also the sum of $\BC_1$ along with a copy of each line extending the rays in $\BC_1$.

Under this weaker assumption, we still have a version of Theorem \ref{thm:eps-reg} in this setting, except now for $\spt\|V\|$ rather than $V$ itself. This is the following:

\begin{theorem}\label{thm:eps-reg weaker assumptions}
There exists $\eps=\eps(n,Q)\in(0,1)$ such that the following is true. Suppose $V\in\S_Q$ is alternating and that the classical singularities of $\spt\smallnorm{V}\cap\{\Theta_V<Q+1/2\}$ are all immersed. If $(2^n\w_n)^{-1}\|V\|(B^{n+1}_2(0))<Q+\frac{1}{2}$ and
$$\dist_\H(\spt\|V\|\cap (\R\times B^n_1(0)), \{0\}\times B^n_1(0))<\eps,$$
then for some $q\in\{1,\dots,Q\}$ there exist (distinct) smooth functions $u_1,\dots,u_q:B^n_{1/2}(0)\to\R$ solving the minimal surface equation for which
$$
\spt\smallnorm{V}\cap(\R\times B^n_{1/2}(0))=\cup_{i=1}^q\graph(u_i)\qquad \text{and} \qquad \smallnorm{u_i}_{C^\ell}\leq C(n,Q,\ell)\hat{E}_V\quad\text{ for all }\ell\geq0.
$$
\end{theorem}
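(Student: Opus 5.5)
We follow the proof of Theorem \ref{thm:eps-reg}, working with the support $\spt\|V\|$ rather than with $V$ itself.

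\textbf{Reduction to $q=Q$ and graphical set-up.} If the plane that $V$ is close to has multiplicity $q<Q$, the conclusion follows from \cite[Theorem B]{MW24} (taking the distinct graphs in the decomposition). So we may assume $q=Q$. Then \cite[Theorem A]{MW24} gives a generalised-$C^{1,\alpha}$ function $u:B^n_{1/2}(0)\to\A_Q(\R)$ with $V\res(\R\times B^n_{1/2}(0))=\mathbf{v}(u)$. Order its values as $\widetilde u_1\le\cdots\le\widetilde u_Q$, and partition $B^n_{1/2}(0)=\mathcal{R}\cup\mathcal{C}\cup\mathcal{K}$ as in Lemma \ref{lemma:decomposition}.

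\textbf{Selection on the support.} Near a classical singularity $x\in\mathcal{C}$, the support is a union of $N$ distinct embedded sheets crossing along $\Gamma$, with multiplicities $m_1,\dots,m_N$ and $\sum m_k=Q$. On each side of $\Gamma$, the distinct values of $u$ are ordered by their normal derivatives, by the Hopf lemma. Each sheet crosses $\Gamma$ as a $C^1$ hypersurface, so the ordering of distinct sheets reverses across $\Gamma$. The alternating condition flips the sign $s$ across $\Gamma$, exactly as in Lemma \ref{lemma:decomposition}.

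We then define the selection on \emph{distinct values}. On each component of $\mathcal{R}$, let $\widehat u_1<\cdots<\widehat u_{N}$ be the distinct sheets. Set $w_k:=\widehat u_k$ if $s=+1$ and $w_k:=\widehat u_{N-k+1}$ if $s=-1$. The sign flip then makes $w_k$ continue $C^1$ across $\Gamma$.

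The main obstacle is that the number $N$ of distinct sheets, and their multiplicities, may change from one component of $\mathcal{R}$ to another. This happens, for example, in the triple-junction example, where the multiplicities on the two sides of the singularity differ. Consequently $u$ itself does not decompose, and we must work with the set-valued map $x\mapsto\{\text{distinct values of }u(x)\}$. My plan to handle this has two parts.
\begin{itemize}
\item By unique continuation for minimal graphs across $\Gamma$, each sheet continues as a single minimal graph on both sides of $\Gamma$. Hence the number of distinct sheets is locally constant across $\mathcal{C}$.
\item Since $\mathcal{R}\cup\mathcal{C}$ is connected modulo $\mathcal{K}$, this number is a constant $N$ on $B^n_{1/2}(0)\setminus\mathcal{K}$.
\end{itemize}
Only the multiplicities are allowed to jump. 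This yields $C^{1,\alpha}$ functions $w_1,\dots,w_N$ whose graphs have union $\spt\|V\|$. Each $w_k$ solves the MSE on $\mathcal{R}$, and across $\mathcal{C}$ by the divergence-theorem extension argument in the proof of Lemma \ref{lemma:decomposition}, which uses only that $w_k$ is $C^1$ and minimal on both sides of $\Gamma$.

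\textbf{Crossing $\mathcal{K}$.} If $N=1$ there is nothing to prove. Otherwise we copy the end of the proof of Theorem \ref{thm:eps-reg}. Multiplicities play no role in the following steps, since they concern only the individual $w_k$ and the blow-ups of $u$.
\begin{itemize}
\item Tangent maps at points of $\mathcal{K}$ are homogeneous by \cite{KMW}. The blow-up argument of Lemma \ref{lemma:blow-ups}, applied to differences $w_k-w_\ell$, shows that their distinct values are harmonic, so the homogeneity is at least $2$.
\item The planar frequency argument of Lemma \ref{lemma:regularity} then gives $w_k\in C^{1,1}$.
\item For $f=w_1-w_2$, the Garofalo--Lin frequency gives $\dim_\H\{f=0,Df=0\}\le n-2$.
\end{itemize}
Thus $\mathcal{K}$ is removable and each $w_k$ is smooth and minimal. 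The estimates $\|w_k\|_{C^\ell}\le C\hat E_V$ follow from elliptic regularity together with $\sup|w_k|\le\sup|u|\le C\hat E_V$.
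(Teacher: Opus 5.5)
\textbf{The gap is circular.} You correctly identify the varying number of distinct sheets as the main obstacle, but your resolution of it fails. The second bullet asserts that $\mathcal{R}\cup\mathcal{C}=B^n_{1/2}(0)\setminus\mathcal{K}$ is connected, so that $N$ is a global constant. At that stage nothing is known about $\mathcal{K}$ except that it is closed. A priori $\mathcal{K}$ could separate the ball: a region of the regular set could be bounded entirely by branch points, which is exactly the scenario flagged in remark (a) after Definition~\ref{defn:orientation}. Different components of $B^n_{1/2}(0)\setminus\mathcal{K}$ could then carry different numbers of distinct sheets. Ruling this out is precisely what $\dim_\H(\mathcal{K})\leq n-2$ gives. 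You only obtain that bound later, by applying the blow-up, $C^{1,1}$ and Garofalo--Lin arguments to the global selection $w_1,\dots,w_N$, and that selection exists only if the connectivity claim holds. Your first bullet is fine, though it needs no unique continuation: local constancy of $N$ across $\mathcal{C}$ is just the hypothesis that the support is immersed there.

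\textbf{The missing idea.} The paper runs the whole $\mathcal{K}$-crossing argument only on functions that are globally defined without knowing that $N$ is constant. These are the bottom and top sheets, i.e.\ $\widetilde u_1$ and $\widetilde u_Q$ exchanged according to the sign $s$, as in the proof of Lemma~\ref{lemma:decomposition}.
\begin{itemize}
\item They are $C^{1,\alpha}$ and minimal off $\mathcal{K}$ by your own argument at $\mathcal{C}$: whatever the multiplicities, the lowest distinct sheet on one side of $\Gamma$ continues in a $C^1$ fashion as the highest on the other.
\item Their difference is non-zero on $\mathcal{R}$.
\item In any blow-up, at least one of their limits is non-zero. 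Otherwise every value is squeezed between them, so the whole blow-up is a single function, hence zero.
\end{itemize}
Lemmas~\ref{lemma:blow-ups}, \ref{lemma:lower-bound} and \ref{lemma:regularity}, and the Garofalo--Lin step, then go through for $f=w_1-w_N$ and give $\dim_\H(\mathcal{K})\leq n-2$. Only at this point is $B^n_{1/2}(0)\setminus\mathcal{K}$ connected, so $N$ is constant and your full selection $w_1,\dots,w_N$ exists. From there you can finish as you planned: minimality across $\mathcal{C}$ by the divergence argument, and across the $\H^{n-1}$-null set $\mathcal{K}$ by removability. Alternatively, as the paper does, observe that $V_{(1)}$ is then stationary and lies in $\S^\pm_N$, and apply Theorem~\ref{thm:eps-reg}.
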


In particular, Theorem \ref{thm:eps-reg weaker assumptions} provides a positive answer to Question 4 in the case when the classical singularities of $\spt\smallnorm{T}$ are all immersed. It also illustrates a key problem in the general case: if one takes the multi-valued function provided by \cite[Theorem A]{MW24}, and looks at a region where it is described locally by single-valued minimal graphs (which exists as the regular part is always non-empty by Allard regularity) one wishes to know if these functions extend to single-valued minimal functions on the whole ball. In the cases discussed in the present paper, this is possible to do precisely using the immersed structure of classical singularities.

\begin{proof}
The difficulty of the proof in comparison to the earlier results of the paper is that, if $V_{(1)}$ denotes the multiplicity one varifold associated to $\spt\|V\|$, then whilst $V_{(1)}$ has stable regular part and is alternating, it is not clear whether $V_{(1)}$ is stationary, or whether all classical singularities in $V_{(1)}$ have a common density. In fact, these properties will follow, once we know the dimension bound on the (density $Q$) branch set of $V$. For this dimension bound, one could appeal to \cite[Theorem A]{KMW} (which, we recall, uses a center manifold construction). Instead, we describe below how our simple PDE methods generalise under this weaker assumption.

We adopt the notation of the proof of Lemma \ref{lemma:decomposition}, in particular $u$ is the $Q$-valued function representing $V$. The immersed assumption on the classical singularities of $\graph(u)$ ensures that $w_1$ and $w_Q$, namely the ``top'' and ``bottom'' sheets, are still well-defined (even if all classical singularities in $V_{(1)}$ do not have a common density). Moreover, the same argument as in Lemma \ref{lemma:decomposition} shows that $w_1$ and $w_Q$ are $C^{1,\alpha}$ and minimal away from their touching set (which we still denote by $\CK$).
We now recycle the proofs of Lemmas \ref{lemma:blow-ups}, \ref{lemma:lower-bound}, and \ref{lemma:regularity}, only focusing on $w_1$ and $w_Q$, namely:
\begin{itemize}
    \item In Lemma \ref{lemma:blow-ups} we can always prove that $w_1$ and $w_Q$ linearise to harmonic functions when blowing-up. Indeed, if $w_1\not\equiv w_Q$, one can run the same proof as in Lemma \ref{lemma:blow-ups}. If $w_1\equiv w_Q$, then in fact the whole coarse blow-up must coincide with a single function (as $w_1$ and $w_Q$ are the top and bottom sheets), and thus be harmonic (by \cite[$(\mathfrak{B}3)$]{MW24}).
    \item The proof of Lemma \ref{lemma:lower-bound} is the same, noting that since the tangent map is always non-zero, the blow-ups $v_1,v_Q$ of $w_1$, $w_Q$ (the top and bottom sheets) must be non-zero, as again if they were to both be zero then the whole blow-up would need to coincide with a single function, which would necessarily be zero giving a contradiction. As one of $v_1,v_Q$ is non-zero this gives the same frequency lower bound.
    \item The proof of Lemma \ref{lemma:regularity} is identical, just applied to the top and bottom sheets $w_1,w_Q$.
\end{itemize}
The proof of the dimension bound $\dim_\H(\CK)\leq n-2$ then follows in an identical way to that seen in the proof of Theorem \ref{thm:eps-reg}, just working with the top and bottom sheets.

In particular, it now follows that $V_{(1)}$ is stationary (as it is stationary away from its branch set, which we now know to be small enough to excise). Furthermore, $B^n_{1/2}(0)\setminus\CK$ is connected, and thus the assumption on the classical singularities of $\spt\|V\|$ implies that the number of distinct elements of the set $\{u_1(x),\dotsc,u_Q(x)\}$ is constant for $x\in \reg(u)$. Let $Q^\prime\in \{1,\dotsc,Q\}$ denote this number. Then by the above, $V_{(1)}\in\S_{Q^\prime}^\pm$ and thus the result follows from Theorem \ref{thm:eps-reg}.
\end{proof}

Notice that Theorem \ref{thm:eps-reg weaker assumptions} gives the following. Fix $Q\in \{2,3,\dotsc\}$, and suppose $V\in \S_Q$ is alternating and that locally about each density $Q$ classical singularity of $V$, the support $\spt\|V\|$ is smoothly immersed. Then, locally about any density $Q$ flat singular point $x\in \sing(V)$, there exists $\rho>0$ such that $(\eta_{x,\rho})_\#V_{(1)}\in \S_{Q^\prime}^\pm$ for some $Q^\prime\in\{2,\dotsc,Q\}$ (depending on $x$). In particular, we immediately see that $x$ is not a branch point of $\spt\|V\|$. Furthermore, one can also apply Lemma \ref{lemma:spines} to show that $\spt\|V\|\cap\{\Theta_V<Q+1/2\}$ is a two-sided smooth immersion away from a closed set of codimension $\geq 3$. These observations provide generalisations of Theorems \ref{thm:main-no-bp}, \ref{thm:main-reg}, \ref{thm:main-compactness}, and \ref{thm:main-reg-2} to this setting.

It is worth noting that if $V$ is the natural varifold associated to an area minimising hypersurface mod $p$, then it is not at all clear whether $V_{(1)}$ is itself, locally about a flat singular point, the natural varifold associated to an area minimising hypersurface mod $p^\prime$, for some $p^\prime\in\{1,\dots,p\}$. The above exhibits that, in this setting, the variational and structural properties of the hypersurface allow one to bypass these complexities. 

Finally, one can more generally use the dimension bound in \cite[Theorem A]{KMW} to see that if $Q\in\{2,3,\dots\}$ and $V\in\S_Q$ such that $\spt\|V\|$ is smoothly immersed locally about each density $Q$ classical singularity of $V$, then, for any density $Q$ flat singular point $x\in\sing(V)$, there exists $Q^\prime\in\{2,\dots,Q\}$ (depending on $x\in\sing(V)$) such that some rescaling of $V_{(1)}$ about $x$ is in $\S_{Q^\prime}$. This gives a generalisation of Theorem \ref{thm:App-2} to this setting. In particular, the conclusions of Corollary \ref{cor:App-4} remain unchanged under this weaker assumption.

\newcommand{\etalchar}[1]{$^{#1}$}

\end{document}